\def\NN{\mathbb{N}}
\def\QQ{\mathbb{Q}}
\def\ZZ{\mathbb{Z}}
\def\ZZd{\mathbb{Z}^d}
\def\ag{\mathcal{A}}
\def\bg{\mathcal{B}}
\def\FF{\mathcal{F}}
\def\EE{\mathbb{E}}
\newcommand{\End}{\mathrm{End}}
\newcommand{\Aut}{\mathrm{Aut}}
\newcommand{\TM}{\mathrm{TM}}
\newcommand{\RTM}{\mathrm{RTM}}
\newcommand{\OB}{\mathrm{OB}}
\newcommand{\LP}{\mathrm{LP}}
\newcommand{\FG}{\mathrm{RFA}}
\newcommand{\EL}{\mathrm{EL}}
\newcommand{\SHIFT}{\mathrm{Shift}}
\newcommand{\SP}{\mathrm{SP}}
\newcommand{\define}[1]{\emph{#1}}
\newcommand{\ID}{\mathrm{id}}
\newcommand{\fix}{\mathrm{fix}}
\newcommand{\Sym}{\mathrm{Sym}}
\newcommand{\Alt}{\mathrm{Alt}}
\definecolor{vert}{RGB}{0,178,102}
\title{The group of reversible Turing machines}
\author{Sebasti\'an Barbieri\inst{1}, Jarkko Kari\inst{2} \and Ville Salo\inst{3}}
\institute{LIP, ENS de Lyon -- CNRS -- INRIA -- UCBL -- Universit\'e de Lyon,\\
	\and
	University of Turku\\
	\and
	Center for Mathematical Modeling, University of Chile \\
	\email{\texttt{sebastian.barbieri@ens-lyon.fr}}, \email{\texttt{jkari@utu.fi}}, \email{\texttt{vosalo@utu.fi}}
}
\authorrunning{S. Barbieri, J. Kari and V. Salo} 
\begin{document}

	\maketitle 
	\begin{abstract}
		We consider Turing machines as actions over configurations in $\Sigma^{\ZZ^d}$ which only change them locally around a marked position that can move and carry a particular state. In this setting we study the monoid of Turing machines and the group of reversible Turing machines. We also study two natural subgroups, namely the group of finite-state automata, which generalizes the topological full groups studied in the theory of orbit-equivalence, and the group of oblivious Turing machines whose movement is independent of tape contents, which generalizes lamplighter groups and has connections to the study of universal reversible logical gates. Our main results are that the group of Turing machines in one dimension is neither amenable nor residually finite, but is locally embeddable in finite groups, and that the torsion problem is decidable for finite-state automata in dimension one, but not in dimension two.
	\end{abstract}

	\section{Introduction}	\label{section.introduction}
	
	\subsection{Turing machines and their generalization}
	
	
	Turing machines have been studied since the 30s as the standard formalization of the abstract concept of computation. However, more recently, Turing machines have also been studied in the context of dynamical systems. In \cite{Ku97}, two dynamical systems were associated to a Turing machine, one with a `moving tape' and one with a `moving head'. After that, there has been a lot of study of dynamics of Turing machines, see for example \cite{DeKuBl06,BlCaNi02,KaOl08,GaMa07,GaGu10,Je13,GaOlTo15}. Another connection between Turing machines and dynamics is that they can be used to define subshifts. Subshifts whose forbidden patterns are given by a Turing machine are called effectively closed, computable, or $\Pi^0_1$ subshifts, and especially in multiple dimensions, they are central to the topic due to the strong links known between SFTs, sofic shifts and $\Pi^0_1$-subshifts, see for example \cite{DuRoSh10,AuSa13}. An intrinsic notion of Turing machine computation for these subshifts on general groups was proposed in \cite{AuBaSa14}, and a similar study was performed with finite state machines in \cite{SaTo15,SaTo15c}.
	
	In all these papers, the definition of a Turing machine is (up to notational differences and switching between the moving tape and moving head model) the following: A Turing machine is a function $T : \Sigma^\ZZ \times Q \to \Sigma^\ZZ \times Q$ defined by a local rule $f_T : \Sigma \times Q \to \Sigma \times Q \times \{-1,0,1\}$ by the formula
\[ T(x, q) = (\sigma_{-d}(\tilde{x}), q') \mbox{ if } f_T(x_0, q) = (a,q',d), \]
where $\sigma: \Sigma^{\ZZ} \to \Sigma^{\ZZ}$ is the shift action given by $\sigma_{d}(x)_{z} = x_{z-d}$, $\tilde{x}_0 = a$ and $\tilde{x}|_{\ZZ \setminus \{0\}} = x|_{\ZZ \setminus \{0\}}$. In this paper, such Turing machines are called \emph{classical Turing machines}. This definition (as far as we know) certainly suffices to capture all computational and dynamical properties of interest, but it also has some undesirable properties: The composition of two classical Turing machines -- and even the square of a classical Turing machine -- is typically not a classical Turing machine, and the reverse of a reversible classical Turing machine is not always a classical Turing machine.

	In this paper, we give a more general definition of a Turing machine, by allowing it to move the head and modify cells at an arbitrary (but bounded) distance on each timestep. With the new definition, we get rid of both issues: With our definition,
\begin{itemize}
\item Turing machines are closed under composition, forming a monoid, and 
\item reversible Turing machines are closed under inversion, forming a group.
\end{itemize}
We also characterize reversibility of classical Turing machines in combinatorial terms, and show what their inverses look like. Our definition of a Turing machine originated in the yet unpublished work \cite{SaScworkinprogress}, where the group of such machines was studied on general $\ZZ$-subshifts (with somewhat different objectives).

	These benefits of the definition should be compared to the benefits of allowing arbitrary radii in the definition of a cellular automaton: If we define cellular automata as having a fixed radius of, say, $3$, then the inverse map of a reversible cellular automaton is not always a cellular automaton, as the inverse of a cellular automaton may have a much larger radius \cite{CzKa07}. Similarly, with a fixed radius, the composition of two cellular automata is not necessarily a cellular automaton.
	
	We give our Turing machine definitions in two ways, with a moving tape and with a moving head, as done in \cite{Ku97}. The moving tape point of view is often the more useful one when studying one-step behavior and invariant measures, whereas we find the moving head point of view easier for constructing examples, and when we need to track the movement of multiple heads. The moving head Turing machines are in fact a subset of cellular automata on a particular kind of subshift. The moving tape machine on the other hand is a generalization of the topological full group of a subshift, which is an important concept in particular in the theory of orbit equivalence. For topological full groups of minimal subshifts, see for example \cite{GiPuSk99,GrMe11,JuMo12}. The (one-sided) SFT case is studied in \cite{Ma12}.
	

	\subsection{Our results and comparisons with other groups}
	
	In Section~\ref{section.preliminaries}, we define our basic models and prove basic results about them. In Section~\ref{subsection_measure}, we define the uniform measure and show as a simple application of it that injectivity and surjectivity are both equal to reversibility.
	
	Our results have interesting counterparts in the theory of cellular automata: One of the main theorems in the theory of cellular automata is that injectivity implies surjectivity, and (global) bijectivity is equivalent to having a cellular automaton inverse map. Furthermore, one can attach to a reversible one- or two-dimensional cellular automaton its `average drift', that is, the speed at which information moves when the map is applied, and this is a homomorphism from the group of cellular automata to a sublattice of $\QQ^d$ (where $d$ is the corresponding dimension), see \cite{Ka96}. In Section~\ref{section.subgroups} we use the uniform measure to define an analog, the `average movement' homomorphism for Turing machines.
	
	
	In Section~\ref{section.subgroups}, we define some interesting subgroups of the group of Turing machines. First, we define the local permutations -- Turing machines that never move the head at all --, and their generalization to oblivious Turing machines where movement is allowed, but is independent of the tape contents. The group of oblivious Turing machines can be seen as a kind of generalization of lamplighter groups. It is easy to show that these groups are amenable but not residually finite. What makes them interesting is that the group of oblivious Turing machines is finitely generated, due to the existence of universal reversible logical gates. It turns out that strong enough universality for reversible gates was proved only recently \cite{AaGrSc15}. 
	
	We also define the group of (reversible) finite-state machines -- Turing machines that never modify the tape. Here, we show how to embed a free group with a similar technique as used in \cite{ElMo12}, proving that this group is non-amenable. By considering the action of Turing machines on periodic points,\footnote{The idea is similar as that in \cite{BoLiRu88} for showing that automorphism groups of mixing SFTs are residually finite, but we do not actually look at subsystems, but the periodic points of an enlarged system, where we allow infinitely many heads to occur.} we show that the group of finite-state automata is residually finite, and the group of Turing machines is locally embeddable in finite groups (in particular sofic).
	
	Our definition of a Turing machine can be seen as a generalization of the topological full group, and in particular finite-state machines with a single state exactly correspond to this group. Thus, it is interesting to compare the results of Section~\ref{section.subgroups} to known results about topological full groups. In \cite{GrMe11,JuMo12} it is shown that the topological full group of a minimal subshift is locally embeddable in finite groups and amenable, while we show that on full shifts, this group is non-amenable, but the whole group of Turing machines is LEF.\footnote{In \cite{SaScworkinprogress} it is shown that on minimal subshifts, the group of Turing machines coincides with the group of finite-state automata.}
	
	Our original motivation for defining these subgroups -- finite-state machines and local permutations -- was to study the question of whether they generate all reversible Turing machines. Namely, a reversible Turing machine changes the tape contents at the position of the head and then moves, in a globally reversible way. Thus, it is a natural question whether every reversible Turing machine can actually be split into reversible tape changes (actions by local permutations) and reversible moves (finite-state automata). We show that this is not the case, by showing that Turing machines can have arbitrarily small average movement, but that elementary ones have only a discrete sublattice of possible average movements. We do not know whether this is the only restriction.
	
	
	In Section~\ref{section.computability}, we show that the group of Turing machines is recursively presented and has a decidable word problem, but that its torsion problem (the problem of deciding if a given element has finite order) is undecidable in all dimensions. For finite-state machines, we show that the torsion problem is decidable in dimension one, but is undecidable in higher dimensions, even when we restrict to a finitely generated subgroup. We note a similar situation with Thompson's group $V$: its torsion problem is decidable in one-dimension, but undecidable in higher dimensions. \cite{BeMa14,BeBl14}

	%
	
	\subsection{Preliminaries}
	In this section we present general definitions and settle the notation which is used throughout the article. The review of these concepts will be brief and focused on the dynamical aspects. For a more complete introduction the reader may refer to~\cite{LiMa95} or~\cite{CeCo10} for the group theoretic aspects. Let $\ag$ be a finite alphabet. The set $\ag^{\ZZd} = \{ x: \ZZd \to \ag\}$ equipped with the left group action $\sigma: \ZZd \times \ag^{\ZZd} \to \ag^{\ZZd}$ defined by $(\sigma_{\vec{v}}(x))_{\vec{u}} = x_{\vec{u}-\vec{v}}$ is a \textit{full shift}. The elements $a \in \ag$ and $x \in \ag^{\ZZd}$ are called \define{symbols} and \define{configurations} respectively. With the discrete topology on $\ag$ the set of configurations $\ag^{\ZZd}$ is compact and given by the metric $d(x,y) = 2^{-\inf (\{ |\vec{v}| \in \NN \mid\ x_{\vec{v}} \neq y_{\vec{v}}  \})}$ where $|\vec{v}|$ is a norm on $\ZZ^d$ (we settle here for the $||\cdot||_{\infty}$ norm). This topology has the \define{cylinders} $[a]_{\vec{v}} = \{x \in \ag^{\ZZd} | x_{\vec{v}} = a\in \ag\}$ as a subbasis. A \emph{support} is a finite subset $F \subset \ZZd$. Given a support $F$, a \emph{pattern with support $F$} is an element $p$ of $\ag^F$. We also denote the cylinder generated by $p$ in position $\vec{v}$ as $[p]_{\vec{v}} = \bigcap_{\vec{u} \in F} [p_{\vec{u}}]_{\vec{v}+\vec{u}}$, and $[p]= [p]_{\vec{0}}$.
	\begin{definition}
		A subset $X$ of $\ag^{\ZZd}$ is a \define{subshift} if it is $\sigma$-invariant -- $\sigma(X)\subset X$ -- and closed for the cylinder topology. Equivalently, $X$ is a subshift if and only if there exists a set of forbidden patterns $\FF$ that defines it.
		$$X = \bigcap_{p \in \FF, \vec{v} \in \ZZd}{\ag^{\ZZd} \setminus [p]_{\vec{v}}}.$$
	\end{definition}
	
	Let $X,Y$ be subshifts over alphabets $\ag$ and $\bg$ respectively. A continuous $\ZZd$-equivariant map $\phi : X \to Y$ between subshifts is called a morphism. A well-known Theorem of Curtis, Lyndon and Hedlund which can be found in full generality in~\cite{CeCo10} asserts that morphisms are equivalent to maps defined by local rules as follows: There exists a finite $F \subset \ZZd$ and $\Phi: \ag^{F} \to \bg$ such that $\forall x \in X: \phi(x)_{\vec{v}} = \Phi(\sigma_{-\vec{v}}(x)|_F)$. If $\phi$ is an endomorphism then we refer to it as a cellular automaton. A cellular automaton is said to be reversible if there exists a cellular automaton $\phi^{-1}$ such that $\phi \circ \phi^{-1} = \phi^{-1} \circ \phi = \ID$. It is well known that reversibility is equivalent to bijectivity.
	
	Throughout this article we use the following notation inspired by Turing machines. We denote by $\Sigma = \{0,\dots,n-1\}$ the set of tape symbols and $Q = \{1,\dots,k\}$ the set of states. We also use exclusively the symbols $n = |\Sigma|$ for the size of the alphabet and $k = |Q|$ for the number of states. Given a function $f : \Omega \to \prod_{i \in I}A_i$ we denote by $f_i$ the projection of $f$ to the $i$-th coordinate. 
	
	\section{Two models for Turing machine groups}\label{section.preliminaries}
	
	In this section we define our generalized Turing machine model, and the group of Turing machines. In fact, we give two definitions for this group, one with a moving head and one with a moving tape as in \cite{Ku97}. We show that -- except in the case of a trivial alphabet -- these groups are isomorphic.\footnote{Note that the \emph{dynamics} obtained from these two definitions are in fact quite different, as shown in \cite{Ku97,Ku10}.} Furthermore, both can be defined both by local rules and `dynamically', that is, in terms of continuity and the shift. In the moving tape model we characterize reversibility as preservation of the uniform measure. Finally we conclude this section by characterizing reversibility for classical Turing machines in our setting.

	\subsection{The moving head model}
	
	Consider $Q = \{1,\dots,k \}$ and let $X_k$ be the subshift with alphabet $Q \cup \{0\}$ such that in each configuration the number of non-zero symbols is at most one. \[ X_k = \{ x \in \{0,1,\ldots,k\}^{\ZZd} \ |\  0 \notin \{x_{\vec{u}}, x_{\vec{v}}\} \implies \vec{u} = \vec{v} \}. \] In particular $X_0 = \{0^{\ZZd}\}$ and $i < j \implies X_i \subsetneq X_j$. Let also $\Sigma = \{0,\dots, n-1\}$ and $X_{n,k} = \Sigma^{\ZZ^d} \times X_{k}$. For the case $d = 1$, configurations in $X_{n,k}$ represent a bi-infinite tape filled with symbols in $\Sigma$ possibly containing a head that has a state in $Q$. Note that there might be no head in a configuration.
	
	\begin{definition}
		Given a function
		\[ f : \Sigma^{F} \times Q \to \Sigma^{F'} \times Q \times \ZZd, \]
		where $F,F'$ are finite subsets of $\ZZd$, we can define a map $T_f : X_{n,k} \to X_{n,k}$ as follows: Let $(x,y) \in X_{n,k}$. If there is no ${\vec v} \in \ZZd$ such that $y_{\vec v} \neq 0$ then $T(x,y) = (x,y)$. Otherwise let $p = \sigma_{-{\vec v}}(x)|_{F}$, $q = y_{\vec v} \neq 0$ and $f(p,q) = (p',q',{\vec d}).$ Then $T(x,y) = (\tilde{x},\tilde{y})$ where:
		$$\tilde{x}_{\vec t} = \begin{cases}
		x_{\vec{t}} & \text{ if } {\vec t}-{\vec v} \notin F' \\ 
		p'_{{\vec t}-{\vec v}}&\text{ if } {\vec t}-{\vec v} \in F'
		\end{cases},\ \ \ \  \tilde{y}_{\vec t} = \begin{cases}
		q' & \text{ if } {\vec t} = {\vec v}+{\vec d}  \\ 
		0&\text{ otherwise }
		\end{cases}$$
		
		Such $T = T_f$ is called a \emph{(moving head) $(d,n,k)$-Turing machine}, and $f$ is its \emph{local rule}. If there exists a $(d,n,k)$-Turing machine $T^{-1}$ such that $T \circ T^{-1} = T^{-1} \circ T = \ID$, we say $T$ is \emph{reversible}.
	\end{definition}
	
	Note that $\sigma_{-{\vec v}}(x)|_{F}$ is the $F$-shaped pattern `at' $\vec v$, but we do not write $x|_{F + {\vec v}}$ because we want the pattern we read from $x$ to have $F$ as its domain.
	
	This definition corresponds to classical Turing machines with the moving head model when $d = 1$, $F = F' = \{0\}$ and $f(x,q)_3 \in \{-1,0,1\}$ for all $x, q$. By possibly changing the local rule $f$, we can always choose $F = [-r_i, r_i]^d$ and $F' = [-r_o,r_o]^d$ for some $r_i, r_o \in \NN$, without changing the Turing machine $T_f$ it defines. The minimal such $r_i$ is called the \emph{in-radius} of $T$, and the minimal $r_o$ is called the \emph{out-radius} of $T$. We say the in-radius of a Turing machine is $-1$ if there is no dependence on input, that is, the neighborhood $[-r_i, r_i]$ can be replaced by the empty set. Since $\Sigma^{F} \times Q$ is finite, the third component of $f(p,q)$ takes only finitely many values $\vec v \in \ZZd$. The maximum of $|\vec v|$ for such $\vec v$ is called the \emph{move-radius} of $T$. Finally, the maximum of all these three radii is called the \emph{radius} of $T$. In this terminology, classical Turing machines are those with in- and out-radius $0$, and move-radius $1$.

	\begin{definition}
	Define $\TM(\ZZd,n,k)$ as the set of $(d,n,k)$-Turing machines and $\RTM(\ZZd,n,k)$ the set of reversible $(d,n,k)$-Turing machines.
	\end{definition}
	
	In some parts of this article we just consider $d = 1$. In this case we simplify the notation and just write $\RTM(n,k) := \RTM(\ZZ,n,k)$. 
	
	Of course, we want $\TM(\ZZd,n,k)$ to be a monoid and $\RTM(\ZZd,n,k)$ a group under function composition. This is indeed the case, and one can prove this directly by constructing local rules for the inverse of a reversible Turing machine and composition of two Turing machines. However, it is much easier to extract this from the following characterization of Turing machines as a particular kind of cellular automaton.
	
	For a subshift $X$, we denote by $\End(X)$ the monoid of endomorphisms of $X$ and $\Aut(X)$ the group of automorphisms of $X$. 
	

	\begin{restatable}{proposition}{propCACharacterization}
	\label{prop_CACharacterization}
	Let $n,k$ be positive integers and $Y = X_{n,0}$. Then:
	\begin{align*}
	\TM(\ZZd,n,k) &= \{ \phi \in \End(X_{n,k}) \mid \phi|_Y = \ID, \phi^{-1}(Y) = Y \} \\
	\RTM(\ZZd,n,k) &= \{ \phi \in \Aut(X_{n,k}) \mid \phi|_Y = \ID  \}
	\end{align*}
	\end{restatable}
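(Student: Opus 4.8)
The plan is to prove both set-equalities by double inclusion, treating the two cases essentially simultaneously since the moving-head Turing machines are, by construction, a restricted class of cellular automata on $X_{n,k}$.

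\medskip

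\noindent\textbf{Turing machines are such endomorphisms.}
First I would check the easy inclusions. Given $T = T_f \in \TM(\ZZd,n,k)$, I need to see that $T$ is a morphism in the sense of Curtis--Lyndon--Hedlund, i.e. continuous and shift-equivariant. Shift-equivariance is clear because the local rule acts relative to the position of the head, and the head-state set-up is translation invariant. Continuity holds because the new tape symbol at a cell $\vec t$ depends only on whether a head is present in a bounded neighbourhood of $\vec t$ (there is at most one head globally, and its influence reaches only out to the radius), so $T$ is in fact a cellular automaton with neighbourhood controlled by the in-, out- and move-radii. The image lands in $X_{n,k}$ since the rule places exactly one head (or none, if there was none), so $\phi = T$ is a well-defined endomorphism of $X_{n,k}$. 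It restricts to the identity on $Y = X_{n,0} = \Sigma^{\ZZd}\times\{0^{\ZZd}\}$ by the explicit ``no head'' clause in the definition, and $\phi^{-1}(Y)=Y$ because a configuration with a head is sent to a configuration with a head and vice versa. In the reversible case, if $T$ has a Turing-machine inverse then it is in particular an automorphism of $X_{n,k}$ fixing $Y$ pointwise. This gives the inclusions $\subseteq$ in both lines.

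\medskip

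\noindent\textbf{Such endomorphisms are Turing machines.}
For the reverse inclusions, take $\phi \in \End(X_{n,k})$ with $\phi|_Y = \ID$ and $\phi^{-1}(Y) = Y$, and let $F = [-r,r]^d$ be (a symmetric box containing) a neighbourhood for $\phi$ with local rule $\Phi : (\Sigma\times(Q\cup\{0\}))^{F}\to \Sigma\times(Q\cup\{0\})$. The key observation is that a configuration of $X_{n,k}$ is determined by the tape content $x\in\Sigma^{\ZZd}$ together with the head position $\vec v$ and state $q$ (or the absence of a head). I want to read off a Turing-machine local rule $f$ from $\Phi$. Consider a configuration with a single head at $\vec v$ in state $q$, reading the tape pattern $p = \sigma_{-\vec v}(x)|_{[-2r,2r]^d}$ (a box large enough to contain all cells whose $\phi$-image can ``see'' the head). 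Applying $\phi$: at any cell $\vec t$ whose neighbourhood $\vec t + F$ avoids $\vec v$, the local rule sees no head, and since $\phi|_Y=\ID$ the symbol is unchanged — this shows $\phi$ modifies the tape only inside the bounded region $\vec v + [-r,r]^d$. The new head, by $\phi^{-1}(Y)=Y$, must appear at exactly one cell, necessarily within $\vec v + [-r,r]^d$ (the only cells whose neighbourhood meets $\vec v$), and its state and position are determined by $p$ and $q$ alone via $\Phi$ — this defines $f(p,q) = (p', q', \vec d)$ with $F' = [-r,r]^d$ and $|\vec d|\le r$. One then verifies $T_f = \phi$ on all of $X_{n,k}$: they agree on head-free configurations (both are the identity there) and on single-head configurations by construction. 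Finally, if $\phi$ is an automorphism fixing $Y$, then $\phi^{-1}$ is also an endomorphism fixing $Y$, hence also of Turing-machine form by what we just proved, so $T_f$ is reversible and $\phi\in\RTM(\ZZd,n,k)$; note $\phi^{-1}(Y)=Y$ is automatic here since $\phi$ is bijective and fixes $Y$ pointwise, which is why that clause is absent from the second line.

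\medskip

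\noindent\textbf{Main obstacle.}
The routine parts are continuity/equivariance and the bookkeeping of radii. The one genuinely delicate point is arguing that the new head's position, state, and the modified tape pattern depend \emph{only} on the local pattern $(p,q)$ around the old head and not on distant tape content — in other words, that the ``well-formedness'' constraint of $X_{n,k}$ (at most one head) does not let $\phi$ smuggle in long-range dependence. This is exactly where the hypotheses $\phi|_Y = \ID$ and $\phi^{-1}(Y)=Y$ do the work: the first pins down $\phi$ completely away from the head, and the second forces the head-count to be preserved, so that the finite local rule $\Phi$ evaluated near the head captures everything. I would make this precise by a careful case analysis on whether a cell's $F$-neighbourhood contains the head cell $\vec v$.
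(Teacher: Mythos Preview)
Your proposal is correct and follows essentially the same route as the paper's proof: both directions are argued the same way, with the reverse inclusion hinging on the observation that $\phi|_Y=\ID$ forces the local rule to act as the identity on head-free windows (so only cells whose neighbourhood sees the head can change), and $\phi^{-1}(Y)=Y$ guarantees exactly one head survives, allowing a Turing-machine local rule to be read off. Your explicit radius bookkeeping with $[-r,r]^d$ and $[-2r,2r]^d$ is exactly the paper's $W_{\vec 0}$ and $W_{\vec 0}+F$ in the symmetric-box case, and your remark that $\phi^{-1}(Y)=Y$ is automatic for bijective $\phi$ fixing $Y$ is precisely why the second line drops that hypothesis.
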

	 
	 \begin{corollary}
	 We have $\phi \in \RTM(\ZZd,n,k)$ if and only if $\phi \in \TM(\ZZd,n,k)$ and $\phi$ is bijective.
	 \end{corollary}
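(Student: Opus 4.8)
The plan is to deduce this directly from Proposition~\ref{prop_CACharacterization} together with a standard compactness argument, so the corollary becomes essentially an unwinding of definitions. For the forward implication, suppose $\phi \in \RTM(\ZZd,n,k)$. By definition $\phi$ is a $(d,n,k)$-Turing machine, so $\phi \in \TM(\ZZd,n,k)$, and it admits a $(d,n,k)$-Turing machine inverse $T^{-1}$ with $\phi \circ T^{-1} = T^{-1} \circ \phi = \ID$; in particular $\phi$ is a bijection. This direction uses nothing beyond the definitions.

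For the converse, suppose $\phi \in \TM(\ZZd,n,k)$ and $\phi$ is bijective. By Proposition~\ref{prop_CACharacterization} we have $\phi \in \End(X_{n,k})$ and $\phi|_Y = \ID$, where $Y = X_{n,0}$. First I would upgrade $\phi$ to an automorphism: since $X_{n,k}$ is compact and Hausdorff, the continuous bijection $\phi$ is a homeomorphism, and from $\phi \circ \sigma_{\vec{v}} = \sigma_{\vec{v}} \circ \phi$ one obtains $\phi^{-1} \circ \sigma_{\vec{v}} = \sigma_{\vec{v}} \circ \phi^{-1}$ by composing on both sides with $\phi^{-1}$; hence $\phi^{-1}$ is again an endomorphism and $\phi \in \Aut(X_{n,k})$. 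Thus $\phi$ is an automorphism of $X_{n,k}$ fixing $Y$ pointwise, and the second line of Proposition~\ref{prop_CACharacterization} yields $\phi \in \RTM(\ZZd,n,k)$.

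The only mildly delicate point is this passage from ``bijective endomorphism'' to ``automorphism'', which is exactly where compactness of $X_{n,k}$ enters; everything else is bookkeeping. One could alternatively bypass the $\RTM$ characterization and instead check directly that $\phi^{-1} \in \TM(\ZZd,n,k)$ via the first line of the proposition: continuity and shift-equivariance of $\phi^{-1}$ as above, $\phi^{-1}|_Y = \ID$ from $\phi|_Y = \ID$ and injectivity of $\phi$, and $(\phi^{-1})^{-1}(Y) = \phi(Y) = Y$ again from $\phi|_Y = \ID$. Either route gives the result; I would present the shorter one and leave the second as a remark.
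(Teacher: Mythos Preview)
Your proof is correct and matches the paper's intended approach: the corollary is stated without proof, as an immediate consequence of Proposition~\ref{prop_CACharacterization}, and your argument supplies exactly the routine compactness step (bijective endomorphism of a compact shift space is an automorphism) needed to pass between the two characterizations. There is nothing to add.
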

	 
	 Clearly, the conditions of Proposition~\ref{prop_CACharacterization} are preserved under function composition and inversion. Thus:
	 
	 \begin{corollary}
	 Under function composition, $(\TM(\ZZd,n,k), \circ)$ is a submonoid of $\End(X_{n,k})$ and $(\RTM(\ZZd,n,k), \circ)$ is a group.
	 \end{corollary}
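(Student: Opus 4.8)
The plan is to avoid any manipulation of local rules and instead read everything off Proposition~\ref{prop_CACharacterization}. Since $\End(X_{n,k})$ is already a monoid under composition (with neutral element $\ID$) and $\Aut(X_{n,k})$ is a group, it suffices to verify that the defining conditions of the proposition --- $\phi|_Y = \ID$ together with $\phi^{-1}(Y) = Y$ in the case of $\TM(\ZZd,n,k)$, and $\phi|_Y = \ID$ in the case of $\RTM(\ZZd,n,k)$ --- hold for $\ID$ and are preserved by the operations in question. Throughout I read $\phi|_Y = \ID$ as the assertion that $\phi(y) = y$ for every $y \in Y$, which in particular forces $\phi(Y) = Y$.

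First I would note that $\ID$ obviously satisfies $\ID|_Y = \ID$ and $\ID^{-1}(Y) = Y$, so $\ID$ belongs to both $\TM(\ZZd,n,k)$ and $\RTM(\ZZd,n,k)$, supplying the identity element. Next, for closure under composition in the monoid case: if $\phi, \psi \in \TM(\ZZd,n,k)$ then $\phi \circ \psi \in \End(X_{n,k})$; for $y \in Y$ we get $(\phi \circ \psi)(y) = \phi(\psi(y)) = \phi(y) = y$, so $(\phi \circ \psi)|_Y = \ID$; and as sets of preimages $(\phi \circ \psi)^{-1}(Y) = \psi^{-1}(\phi^{-1}(Y)) = \psi^{-1}(Y) = Y$. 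Hence $\phi \circ \psi \in \TM(\ZZd,n,k)$, and $(\TM(\ZZd,n,k), \circ)$ is a submonoid of $\End(X_{n,k})$ (associativity being inherited from composition of functions).

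For the group case I would argue similarly: if $\phi, \psi \in \RTM(\ZZd,n,k)$ then $\phi \circ \psi \in \Aut(X_{n,k})$ and, exactly as above, $(\phi \circ \psi)|_Y = \ID$, so $\phi \circ \psi \in \RTM(\ZZd,n,k)$. Closure under inversion follows because for $\phi \in \RTM(\ZZd,n,k)$ the functional inverse $\phi^{-1}$ lies in $\Aut(X_{n,k})$, and applying $\phi^{-1}$ to the identity $\phi(y) = y$ (valid for $y \in Y$) shows $\phi^{-1}$ also fixes $Y$ pointwise, i.e. $\phi^{-1} \in \RTM(\ZZd,n,k)$. Together with the presence of $\ID$ this gives that $(\RTM(\ZZd,n,k), \circ)$ is a group. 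I do not expect any genuine obstacle here: the entire content is Proposition~\ref{prop_CACharacterization}, and once $\TM$ and $\RTM$ are identified with endomorphisms, respectively automorphisms, of $X_{n,k}$ that are constrained on the subsystem $Y = X_{n,0}$, the closure properties are an immediate set-theoretic check.
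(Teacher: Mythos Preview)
Your proposal is correct and follows exactly the approach indicated in the paper, which simply states that ``the conditions of Proposition~\ref{prop_CACharacterization} are preserved under function composition and inversion'' before stating the corollary. You have just written out these routine verifications explicitly.
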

	 
	 We usually omit the function composition symbol, and use the notations $\TM(\ZZd,n,k)$ and $\RTM(\ZZd,n,k)$ to refer to the corresponding monoids and groups.
	 
	\subsection{The moving tape model}
	
	It's also possible to consider the position of the Turing machine as fixed at $0$, and move the tape instead, to obtain the moving tape Turing machine model. In \cite{Ku97}, where Turing machines are studied as dynamical systems, the moving head model and moving tape model give non-conjugate dynamical systems. However, the abstract monoids defined by the two points of view turn out to be equal, and we obtain an equivalent definition of the group of Turing machines. 
	
	
	As in the previous section, we begin with a definition using local rules.
	
	\begin{definition}
	Given a function
	$f : \Sigma^{F} \times Q \to \Sigma^{F'} \times Q \times \ZZd$,
	where $F,F'$ are finite subsets of $\ZZd$, we can define a map $T_{f} : \Sigma^{\ZZd} \times Q \to \Sigma^{\ZZd} \times Q$ as follows: If $f(x|_F, q) = (p, q', \vec d)$, then $T_f(x, q) = (\sigma_{\vec d}(y), q')$
	where
	\[ y_{\vec u} = \left\{\begin{array}{cc} 
	x_{\vec u}, &\mbox{if } \vec u \notin F' \\
	p_{\vec u}, &\mbox{if } \vec u \in F',
	\end{array}\right. \]
	is called the \emph{moving tape Turing machine} defined by $f$.
	\end{definition}
	
	These machines also have the following characterization with a slightly more dynamical feel to it. Say that $x$ and $y$ are \emph{asymptotic}, and write $x \sim y$, if $d(\sigma_{\vec v}(x), \sigma_{\vec v}(y)) \rightarrow 0$ as $|\vec v| \rightarrow \infty$. 
	We write $x \sim_m y$ if $x_{\vec v} = y_{\vec v}$ for all $|\vec v| \geq m$, and clearly $x \sim y \iff \exists m: x \sim_m y$.
	
	\begin{restatable}{lemma}{MovingTapeDynamicalDef}
	\label{lem:MovingTapeDynamicalDef}
	Let $T : \Sigma^{\ZZd} \times Q \to \Sigma^{\ZZd} \times Q$ be a function. Then $T$ is a moving tape Turing machine if and only if it is continuous, and for a continuous function $s : \Sigma^{\ZZd} \times Q \to \ZZd$ and $a \in \NN$ we have $T(x, q)_1 \sim_a \sigma_{s(x, q)}(x)$ for all $(x, q) \in \Sigma^{\ZZd} \times Q$.
	\end{restatable}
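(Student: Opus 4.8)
\emph{Overview.} The plan is to prove the two implications separately, in each case using compactness of $\Sigma^{\ZZd}\times Q$ (a compact space times a finite discrete one) to turn continuity into dependence on a bounded window; in spirit the non-trivial direction is a Curtis--Hedlund--Lyndon-type argument.

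\emph{From a local rule to the dynamical description.} Suppose $T=T_f$ for some $f:\Sigma^F\times Q\to\Sigma^{F'}\times Q\times\ZZd$. Put $s(x,q):=f(x|_F,q)_3$. Since $x\mapsto x|_F$ is continuous and $f$ has finite domain, $s$ is locally constant (hence continuous) and has finite range, bounded by some $R$. Writing $T_f(x,q)=(\sigma_{\vec d}(y),q')$ as in the definition with $\vec d=s(x,q)$, the configuration $y$ agrees with $x$ off the fixed finite set $F'$, so $y\sim_{m_0}x$ for some $m_0$ depending only on $F'$; applying $\sigma_{\vec d}$ and using $|\vec d|\le R$ gives $T_f(x,q)_1=\sigma_{\vec d}(y)\sim_a\sigma_{\vec d}(x)=\sigma_{s(x,q)}(x)$ with $a:=m_0+R$ independent of $(x,q)$, which is the claimed form. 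Continuity of $T_f$ itself is immediate: it factors through $(x,q)\mapsto(x|_F,q)$, then a finite rule, then one of finitely many shifts.

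\emph{From the dynamical description to a local rule.} Conversely, assume $T$ is continuous and $T(x,q)_1\sim_a\sigma_{s(x,q)}(x)$ for a continuous $s$ and a fixed $a\in\NN$. By compactness $s$ has finite range, bounded by some $R$; set $F':=[-(a+R),a+R]^d$. Then $(x,q)\mapsto s(x,q)$, $(x,q)\mapsto T(x,q)_2$ and $(x,q)\mapsto\big(\sigma_{-s(x,q)}(T(x,q)_1)\big)|_{F'}$ are all continuous maps into finite sets, hence locally constant, so by uniform continuity there is a single radius $r$ with $F:=[-r,r]^d$ such that each depends only on $x|_F$ and $q$. Define $f:\Sigma^F\times Q\to\Sigma^{F'}\times Q\times\ZZd$ by picking, for each $(p,q)$, an arbitrary $x$ with $x|_F=p$, and setting $\vec d:=s(x,q)$ and $f(p,q):=\big(\sigma_{-\vec d}(T(x,q)_1)|_{F'},\,T(x,q)_2,\,\vec d\big)$; this is well defined by the choice of $r$. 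To verify $T=T_f$, fix $(x,q)$ and let $\vec d=s(x,q)$, $q'=T(x,q)_2$. By construction $T_f(x,q)=(\sigma_{\vec d}(\hat y),q')$ where $\hat y$ agrees with $x$ off $F'$ and $\hat y|_{F'}=\sigma_{-\vec d}(T(x,q)_1)|_{F'}$. But $T(x,q)_1\sim_a\sigma_{\vec d}(x)$, shifted by $-\vec d$ and using $a+|\vec d|\le a+R$, says precisely that $\sigma_{-\vec d}(T(x,q)_1)$ also agrees with $x$ off $F'$; since it agrees with $\hat y$ on $F'$ as well, we get $\hat y=\sigma_{-\vec d}(T(x,q)_1)$, whence $T_f(x,q)_1=\sigma_{\vec d}(\hat y)=T(x,q)_1$, and the second coordinates coincide by construction.

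\emph{Expected main obstacle.} The content is the bookkeeping of supports in the converse direction: the local-rule formalism forces a \emph{single} finite output support $F'$, which is available only because the hypothesis supplies one constant $a$ valid for all $(x,q)$; and one must check that the cells of $T(x,q)_1$ the rule has to recover lie in the fixed finite set $F'+s(x,q)\subseteq[-(a+2R),a+2R]^d$, whose boundedness uses finiteness of the range of $s$. Once these windows are pinned down everything else is a routine compactness argument, so no serious difficulty is expected.
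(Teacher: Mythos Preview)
Your proof is correct and follows essentially the same route as the paper's: both directions use that a continuous map from the compact space $\Sigma^{\ZZd}\times Q$ into a discrete target is locally constant with bounded range, and then assemble a local rule from the finitely many resulting data. Your bookkeeping is in fact tighter than the paper's sketch---you track the output window as $F'=[-(a+R),a+R]^d$ (accounting for the shift by $s(x,q)$ before comparing with $x$), whereas the paper simply asserts one can extract a rule with output support $[-a,a]^d$; your choice is the one that makes the verification $T_f=T$ go through cleanly.
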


	Note that in place of $a$ we could allow a continuous $\NN$-valued function of $(x, q)$ -- the definition obtained would be equivalent, as the $a$ of the present definition can be taken as the maximum of such a function.
	
	We call the function $s$ in the definition of these machines the \define{shift indicator} of $T$, as it indicates how much the tape is shifted depending on the local configuration around $0$. In the theory of orbit equivalence and topological full groups, the analogs of $s$ are usually called \emph{cocycles}. We also define in-, out- and move-radii of moving tape Turing machines similarly as in the moving head case.
	
	We note that it is not enough that $T(x, q)_1 \sim \sigma_{s(x, q)}(x)$ for all $(x, q) \in \Sigma^{\ZZd} \times Q$: Let $Q = \{1\}$ and consider the function $T : \Sigma^{\ZZ} \times Q \to \Sigma^{\ZZ} \times Q$ defined by $(T(x, 1)_1)_i = x_{-i}$ if $x_{[-|i|+1,|i|-1]} = 0^{2i-1}$ and $\{x_i,x_{-i}\} \neq \{0\}$, and $(T(x, 1)_1)_i = x_i$ otherwise. Clearly this map is continuous, and the constant-$\vec 0$ map $s(x,q) = \vec 0$ gives a shift-indicator for it. However, $T$ is not defined by any local rule since it can modify the tape arbitrarily far from the origin.
	
%
As for moving head machines, it is easy to see (either by constructing local rules or by applying the dynamical definition) that the composition of two moving tape Turing machines is again a moving tape Turing machine. This allows us to proceed as before and define their monoid and group.

	\begin{definition}
		We denote by $\TM_{\fix}(\ZZd,n,k)$ and $\RTM_{\fix}(\ZZd,n,k)$ the monoid of moving tape $(d,n,k)$-Turing machines and the group of reversible moving tape $(d,n,k)$-Turing machines respectively.
	\end{definition}
	
	Now, let us show that the moving head and moving tape models are equivalent. First, there is a natural epimorphism $\Psi: \TM(\ZZd,n,k) \to \TM_{\fix}(\ZZd,n,k)$. Namely, let $T \in \TM(\ZZd,n,k)$. We define $\Psi(T)$ as follows: Let $(x,q) \in \Sigma^{\ZZd}\times Q$. Letting $y$ be the configuration such that $y_{\vec 0} = q$ and $0$ everywhere else and $T(x,y) = (x',y')$ such that $y'_{\vec v} = q'$ we define $\Psi(T)(x,q) = (\sigma_{-\vec v}(x'),q')$. This is clearly an epimorphism but it's not necessarily injective if $n = 1$. Indeed, we have that $\RTM_{\fix}(\ZZd,1,k) \cong S_k$ and $\TM_{\fix}(\ZZd,1,k)$ is isomorphic to the monoid of all functions from $\{1,\dots,k\}$ to itself while $\ZZ^d \leq \RTM(\ZZd,1,k) \leq \TM(\ZZd,1,k)$. Nevertheless, if $n \geq 2$ this mapping is an isomorphism.
		
	\begin{restatable}{lemma}{GroupIsomorphismLemma}
	\label{group_isomorphism_lemma}
		If $n \geq 2$ then:
		\begin{align*}
		\TM_{\fix}(\ZZd,n,k) &\cong \TM(\ZZd,n,k) \\
		\RTM_{\fix}(\ZZd,n,k) &\cong \RTM(\ZZd,n,k).
		\end{align*}
	\end{restatable}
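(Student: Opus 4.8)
The plan is to prove that the natural epimorphism $\Psi \colon \TM(\ZZd,n,k) \to \TM_{\fix}(\ZZd,n,k)$ is injective when $n \geq 2$, and then to deduce the statement for reversible machines at no extra cost. First I would reduce injectivity to a statement about local behaviour. Suppose $T, T' \in \TM(\ZZd,n,k)$ satisfy $\Psi(T) = \Psi(T')$. On a configuration $(x, y) \in X_{n,k}$ with $y = 0^{\ZZd}$ both $T$ and $T'$ act as the identity by definition, so they agree there. Since $T$ and $T'$ are endomorphisms of $X_{n,k}$ (Proposition~\ref{prop_CACharacterization}), they commute with $\sigma$, so on a configuration whose head sits at position $\vec v$ their action is the $\sigma_{\vec v}$-conjugate of their action on the configuration obtained by shifting the head to $\vec 0$; hence it suffices to show $T(x, y) = T'(x, y)$ whenever $y_{\vec 0} = q \neq 0$ and $y$ vanishes elsewhere.

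For such a configuration, write $T(x, y) = (x'', y'')$ with $y''_{\vec d} = q'$ and $T'(x, y) = (x''', y''')$ with $y'''_{\vec d'} = q''$, where $\vec d, \vec d'$ are the respective head displacements. By the definition of $\Psi$, the equality $\Psi(T)(x, q) = \Psi(T')(x, q)$ says exactly that $q' = q''$ and $\sigma_{-\vec d}(x'') = \sigma_{-\vec d'}(x''')$. Fix a finite $G \subset \ZZd$ such that both $x''$ and $x'''$ agree with $x$ outside $G$ (possible since the out-neighbourhoods are finite). Writing $\vec w = \vec d - \vec d'$, the identity $\sigma_{-\vec d}(x'') = \sigma_{-\vec d'}(x''')$ rewrites as $x''_{\vec t} = x'''_{\vec t - \vec w}$ for all $\vec t \in \ZZd$, and evaluating at $\vec t$ outside the finite set $G \cup (G + \vec w)$ yields $x_{\vec t} = x_{\vec t - \vec w}$. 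Now $\vec d$ depends only on $q$ and the restriction of $x$ to the finite in-neighbourhood of $T$, and likewise for $\vec d'$; so once we fix $q$ and the restriction of $x$ to the union $H$ of the two in-neighbourhoods, the vector $\vec w$ is determined. If $\vec w \neq \vec 0$, then, using $n \geq 2$, we may extend the chosen pattern on $H$ to a configuration $x$ that is not eventually constant along the ray $\{\vec t_0 + j \vec w : j \geq 0\}$ for some $\vec t_0$ whose entire ray is disjoint from $H$; but for $j$ large this ray leaves $G \cup (G+\vec w)$, so $x_{\vec t_0 + j\vec w} = x_{\vec t_0 + (j-1)\vec w}$ for all large $j$, a contradiction. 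Hence $\vec w = \vec 0$, so $\vec d = \vec d'$ and $x'' = x'''$, and therefore $T(x,y) = T'(x,y)$; combined with the headless case this gives $T = T'$.

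Finally I would assemble the pieces: injectivity together with the already-established surjectivity makes $\Psi$ a monoid isomorphism $\TM(\ZZd,n,k) \cong \TM_{\fix}(\ZZd,n,k)$. Since $\RTM(\ZZd,n,k)$ and $\RTM_{\fix}(\ZZd,n,k)$ are, by definition of reversibility, precisely the groups of units of $\TM(\ZZd,n,k)$ and $\TM_{\fix}(\ZZd,n,k)$, and a monoid isomorphism carries units to units in both directions, $\Psi$ restricts to a group isomorphism $\RTM(\ZZd,n,k) \cong \RTM_{\fix}(\ZZd,n,k)$. The only real work is in the middle paragraph, and the crux is ruling out $\vec w \neq \vec 0$; this is the unique place the hypothesis $n \geq 2$ is used, and it is exactly what fails when $n = 1$, where the single-symbol tape carries no information and $\Psi$ collapses the pure head translations $\ZZd \leq \RTM(\ZZd,1,k)$.
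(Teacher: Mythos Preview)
Your proof is correct. Both your argument and the paper's proceed by showing that the epimorphism $\Psi$ is injective, and both hinge on the same underlying obstruction: if $\Psi(T)=\Psi(T')$ but the head displacements differ by $\vec w\neq\vec 0$, one is forced to have $x\sim\sigma_{\vec w}(x)$, which one can avoid when $n\geq 2$.

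The routes to this point differ. The paper argues by contrapositive and invokes a strongly aperiodic subshift $X\subset\{0,1\}^{\ZZd}$: starting from any witness $(x,y)$ to $T\neq T'$, it replaces $x$ outside a large ball by a configuration of $X$, obtaining $x'$ with $x'\not\sim\sigma_{\vec v}(x')$ for all $\vec v\neq\vec 0$; this single property immediately yields $\Psi(T)(x',q)\neq\Psi(T')(x',q)$. Your argument is more hands-on and more elementary: you unpack what $\Psi(T)=\Psi(T')$ says coordinatewise, isolate the finite sets $H$ (in-neighbourhoods) and $G$ (out-neighbourhoods), and then explicitly build a configuration violating the forced periodicity $x_{\vec t}=x_{\vec t-\vec w}$. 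The trade-off: the paper's proof is shorter once one is willing to cite the existence of strongly aperiodic $\ZZd$-subshifts, whereas yours avoids that external ingredient entirely. Your ray construction is a bit heavier than necessary (it suffices to pick a single $\vec t$ with both $\vec t$ and $\vec t-\vec w$ outside $H\cup G\cup(G+\vec w)$ and set $x_{\vec t}\neq x_{\vec t-\vec w}$), but it is certainly valid. Your final deduction of the $\RTM$ case via units of the monoid is cleaner than treating it separately.
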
 

	The previous result means that besides the trivial case $n = 1$ where the tape plays no role, we can study the properties of these groups using any model. 
	 
	
	\subsection{The uniform measure and reversibility.}\label{subsection_measure}
	
	Consider the space $\Sigma^{\ZZd} \times Q$. We define a measure $\mu$ on $\mathcal{B}(\Sigma^{\ZZd} \times Q)$
	 as the product measure of the uniform Bernoulli measure and the uniform discrete measure. That is, if $F$ is a finite subset of $\ZZd$ and $p \in \Sigma^F$, then: $$\mu([p]\times \{q\}) = \frac{1}{kn^{|F|}}.$$
	 
	
	With this measure in hand we can prove the following:
	
	\begin{restatable}{theorem}{InjectiveSurjectiveReversible}
	\label{theorem_injective_surjective_reversible}
		Let $T \in \TM_{\fix}(\ZZd,n,k)$. Then the following are equivalent:
		\begin{enumerate}
			\item $T$ is injective.
			\item $T$ is surjective.
			\item $T \in \RTM_{\fix}(\ZZd,n,k)$.
			\item $T$ preserves the uniform measure ($\mu(T^{-1}(A)) = \mu(A)$ for all Borel sets $A$).
			\item $\mu(T(A)) = \mu(A)$ for all Borel sets $A$.
		\end{enumerate}
	\end{restatable}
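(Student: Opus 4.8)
The plan is to collapse all five conditions onto a single finite combinatorial property of the local rule, using the uniform measure $\mu$ as the bridge. Fix $T\in\TM_{\fix}(\ZZd,n,k)$ and write its local rule, after enlarging both neighbourhoods to a common box, as $f=(f_1,f_2,f_3):\Sigma^B\times Q\to\Sigma^B\times Q\times M$ with $B=[-r,r]^d$ and $M=[-m,m]^d$. For $N\ge r+m$ put $W=[-N,N]^d$, and for $p\in\Sigma^W$, $q'\in Q$ define
\[ N_W(p,q') \;:=\; \#\bigl\{\,(u,q)\in\Sigma^B\times Q \;:\; f_2(u,q)=q'\ \text{and}\ f_1(u,q)_{\vec b}=p_{\vec b+f_3(u,q)}\ \text{for all }\vec b\in B\,\bigr\}, \]
which is well defined since $\vec b+f_3(u,q)\in B+M\subseteq W$. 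The first step is a direct measure computation: a point $(x,q)\in T^{-1}([p]\times\{q'\})$ is determined by the window $u:=x|_B$, the state $q$, and the restriction of $x$ to $(W-f_3(u,q))\setminus B$, and such a point exists exactly when $(u,q)$ is one of the $N_W(p,q')$ pairs above; counting these disjoint cylinders yields
\[ \mu\bigl(T^{-1}([p]\times\{q'\})\bigr) \;=\; N_W(p,q')\cdot\mu\bigl([p]\times\{q'\}\bigr). \]
Since the cylinders $[p]\times\{q'\}$ generate the Borel $\sigma$-algebra, condition~(4) is therefore equivalent to the property $(\star)$: \emph{$N_W(p,q')=1$ for every $N\ge r+m$, $p\in\Sigma^W$ and $q'\in Q$}.

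The combinatorial core is then a pigeonhole argument. Summing the displayed count over $p$ gives $\sum_{p\in\Sigma^W}N_W(p,q')=\#f_2^{-1}(q')\cdot n^{|W|-|B|}$, where $\#f_2^{-1}(q'):=\#\{(u,q)\in\Sigma^B\times Q:f_2(u,q)=q'\}$, and $\sum_{q'\in Q}\#f_2^{-1}(q')=n^{|B|}k$; hence, if $N_W(\cdot,q')\le1$ for all $q'$, or if $N_W(\cdot,q')\ge1$ for all $q'$, then in fact $N_W\equiv1$, i.e.\ $(\star)$ holds. Now $T$ is surjective iff $T^{-1}([p]\times\{q'\})\ne\emptyset$ for all $p,q'$, iff $N_W\ge1$ everywhere; so (2) $\Leftrightarrow$ ``$N\ge1$ everywhere'' $\Rightarrow$ $(\star)$ $\Leftrightarrow$ (4), while (4) $\Rightarrow$ (2) because then $T(\Sigma^{\ZZd}\times Q)$ is closed and of full measure and $\mu$ has full support. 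For (1): if $N_W(p,q')\ge2$, take two distinct counted pairs $(u,q)\ne(u',q'')$, extend $p$ arbitrarily to $\bar p\in\Sigma^{\ZZd}$, and form the configurations $x$ (equal to $u$ on $B$ and to $\sigma_{-f_3(u,q)}(\bar p)$ off $B$) and $x'$ (formed analogously from $u'$); one checks $T(x,q)=T(x',q'')=(\bar p,q')$, so $T$ is not injective. Conversely, if $T(x,q)=T(x',q'')=(z,q')$ with $(x,q)\ne(x',q'')$, then $(x|_B,q)$ and $(x'|_B,q'')$ are two distinct pairs counted by $N_W(z|_W,q')$ for large $W$. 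These give (1) $\Leftrightarrow$ ``$N\le1$ everywhere'', and together with the pigeonhole step, (1) $\Leftrightarrow$ $(\star)$ $\Leftrightarrow$ (2) $\Leftrightarrow$ (4).

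It remains to fold in (3) and (5). If $T$ is injective and surjective, it is a continuous bijection of the compact metric space $\Sigma^{\ZZd}\times Q$, hence a homeomorphism; applying Lemma~\ref{lem:MovingTapeDynamicalDef} to $T^{-1}$, whose shift indicator may be taken to be $-s\circ T^{-1}$ (bounded since the continuous function $s$ has finite range), shows $T^{-1}\in\TM_{\fix}(\ZZd,n,k)$, so $T\in\RTM_{\fix}(\ZZd,n,k)$. Thus (1)$\wedge$(2) $\Rightarrow$ (3), while (3) $\Rightarrow$ (1) is immediate, which closes the equivalence of (1)--(4). Finally, if (4) holds then $T,T^{-1}\in\RTM_{\fix}(\ZZd,n,k)$, and applying (4) to $T^{-1}$ gives $\mu(T(A))=\mu\bigl((T^{-1})^{-1}(A)\bigr)=\mu(A)$, which is (5); conversely, (5) with $A=\Sigma^{\ZZd}\times Q$ forces $\mu\bigl(T(\Sigma^{\ZZd}\times Q)\bigr)=1$, whence $T$ is surjective by full support of $\mu$, i.e.\ (2).

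The step I expect to be most delicate is the measure computation of the first paragraph --- especially choosing $W$ large enough relative to $r$ and $m$ so that $B+\vec d\subseteq W$ and $(W-\vec d)\setminus B$ has cardinality $|W|-|B|$ independently of $\vec d\in M$ --- together with the two-configuration gluing used to witness non-injectivity; everything else reduces to pigeonhole and point-set topology. It is worth stressing that, unlike the cellular-automaton analogues of ``injective $\Rightarrow$ surjective'' and of the balance theorem, no Garden-of-Eden or amenability input is needed here: since a Turing machine rewrites only a bounded window and shifts by a bounded distance, measure preservation is already a finite condition on its local rule.
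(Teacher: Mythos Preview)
Your proof is correct and follows essentially the same idea as the paper's: both reduce everything to a finite pigeonhole/measure argument on cylinders. The paper partitions the \emph{domain} into the cylinders $C_i=[p_i]\times\{q\}$ with $p_i\in\Sigma^{[-r,r]^d}$, observes that $T|_{C_i}$ is a homeomorphism onto a shifted cylinder $D_i$ of the same measure, and then argues that injectivity $\Leftrightarrow$ the $D_i$ are disjoint, surjectivity $\Leftrightarrow\bigcup D_i$ is everything, and these coincide because the total measure is~$1$. Your counting function $N_W$ is the dual bookkeeping device: instead of looking at forward images of domain cylinders, you count how many $(u,q)$ land in a fixed large \emph{codomain} cylinder, and the identity $\sum_{p,q'}N_W(p,q')=kn^{|W|}$ is exactly the same pigeonhole. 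The enlargement to $W=[-N,N]^d$ with $N\ge r+m$ is your way of absorbing the shift that the paper handles by allowing $D_i$ to be an off-centre cylinder.

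One point where you are actually more careful than the paper: for $(1)\wedge(2)\Rightarrow(3)$ you explicitly invoke Lemma~\ref{lem:MovingTapeDynamicalDef} with shift indicator $-s\circ T^{-1}$ to show the inverse is again a moving-tape machine, whereas the paper simply writes ``by compactness''. Your treatment of $(5)$ (apply the already-proved $(4)$ to $T^{-1}$, and for the converse take $A$ to be the whole space) also matches the paper's last two paragraphs. The one place worth tightening is the sentence ``$(x|_B,q)$ and $(x'|_B,q'')$ are two \emph{distinct} pairs'': this uses that if $(x|_B,q)=(x'|_B,q'')$ then the shift and the rewritten window coincide, forcing $x=x'$; you rely on this implicitly, and it would be cleaner to say so.
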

	
	\begin{remark}
	The proof is based on showing that every Turing machine is a \define{local homeomorphism} and preserves the measure of all large-radius cylinders in the forward sense. Note that preserving the measure of large-radius cylinders in the forward sense does not imply preserving the measure of all Borel sets, in general. For example, the machine which turns the symbol in $F =\{\vec{0}\}$ to $0$ without moving the head satisfies $\mu([p]) = \mu(T[p])$ for any $p \in \Sigma^S$ with $S \supset F$. But $\mu([]) = 1$ and $\mu(T([]))=\mu([0]) =1/2$, where $[] = \{0,\ldots,n\}^\ZZ$ is the cylinder defined by the empty word. 
	\end{remark}
	
	Using the measure, one can define the average movement of a Turing machine.
	
	\begin{definition}
		Let $T \in \TM_{\fix}(\ZZd,n,k)$ with shift indicator function $s : \Sigma^{\ZZd} \times Q \to \ZZd$. 
		We define the \emph{average movement} $\alpha(T) \in \QQ^d$ as
		\[ \alpha(T) := \EE_{\mu}(s) = \int_{\Sigma^{\ZZd}\times Q} s(x,q) d\mu, \]
		where $\mu$ is the uniform measure defined in Subsection~\ref{subsection_measure}.
		For $T$ in $\TM(\ZZd,n,k)$ we define $\alpha$ as the application to its image under the canonical epimorphism $\Psi$, that is, $\alpha(T) := \alpha(\Psi(T))$. 
	\end{definition}
	
	We remark that this integral is actually a finite sum over the cylinders $p \in \Sigma^F$. Nonetheless, its expression as an expected value allows us to show the following: If $T_1,T_2 \in \RTM(\ZZd,n,k)$ then $\alpha(T_1 \circ T_2) = \alpha(T_1)+\alpha(T_2)$. Indeed, as reversibility implies measure-preservation, we have that
	\[ \EE_{\mu}(s_{T_1 \circ T_2}) = \EE_{\mu}(s_{T_1}\circ T_2+s_{T_2}) = \EE_{\mu}(s_{T_1})+ \EE_{\mu}(s_{T_2}). \]
	This means that $\alpha$ defines an homomorphism from $\RTM(\ZZd,n,k)$ to $\QQ^d$.

	\subsection{Classical Turing machines}
	
	As discussed in the introduction, we say a one-dimensional Turing machine is \emph{classical} if its in- and out-radii are $0$, and its move-radius is $1$. In this section, we characterize reversibility in classical Turing machines. If $T_0$ has in-, out- and move-radius $0$, that is, $T_0$ only performs a permutation of the set of pairs $(s,q) \in \Sigma \times Q$ at the position of the head, then we say $T_0$ is a \emph{state-symbol permutation}. If $T_1$ has in-radius $-1$, never modifies the tape, and only makes movements by vectors in $\{-1,0,1\}$, then $T_1$ is called a \emph{state-dependent shift}.\footnote{Note that these machines are slightly different than the groups $\SP(\ZZ,n,k)$ and $\SHIFT(\ZZ,n,k)$ introduced in Section~\ref{section.subgroups}, as the permutations in $\SP(\ZZ,n,k)$ do not modify the tape, and moves in $\SHIFT(\ZZ;n,k)$ cannot depend on the state.}
	
	\begin{restatable}{theorem}{ClassicalReversibility}
	\label{thm:ClassicalReversibility}
	A classical Turing machine $T$ is reversible if and only if it is of the form $T_1 \circ T_0$ where $T_0$ is a state-symbol permutation and $T_1$ is a state-dependent shift. 
	\end{restatable}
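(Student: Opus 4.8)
The plan is to reduce the statement to the following combinatorial characterization of reversibility: writing the local rule of a classical Turing machine $T$ as $f = (f_1,f_2,f_3) : \Sigma \times Q \to \Sigma \times Q \times \{-1,0,1\}$, the machine $T$ is reversible if and only if the map $(f_1,f_2) : \Sigma \times Q \to \Sigma \times Q$ is a bijection and $f_3$ factors through $f_2$, i.e.\ there is $\phi : Q \to \{-1,0,1\}$ with $f_3 = \phi \circ f_2$. Granting this, the decomposition is immediate: take $T_0$ to be the state-symbol permutation with local rule $(f_1,f_2)$ and $T_1$ the state-dependent shift that moves by $\phi(q)$ in state $q$ (without altering the tape or the state); composing local rules shows $T_1 \circ T_0 = T$, since $T_0$ writes $f_1(s,q)$, passes to state $f_2(s,q)$ and does not move, after which $T_1$ moves by $\phi(f_2(s,q)) = f_3(s,q)$. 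Conversely, a state-symbol permutation is reversible (its rule is a bijection of the finite set $\Sigma \times Q$, whose inverse bijection gives the inverse machine) and a state-dependent shift is reversible (negating all move vectors gives the inverse), so any $T$ of the stated form is reversible.

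It therefore remains to prove the two structural claims, assuming $T$ reversible — hence injective. Both are proved by contradiction, exhibiting two distinct configurations of $X_{n,k}$, each carrying a single head, with the same image. For the first claim, suppose $(f_1,f_2)(s,q) = (f_1,f_2)(s',q') = (t,p)$ with $(s,q) \neq (s',q')$. If $f_3(s,q) = f_3(s',q') =: d$, I take two configurations with heads at position $0$ in states $q$ and $q'$ whose tapes agree away from $0$ and carry $s$, resp.\ $s'$, at $0$; applying $T$ overwrites position $0$ by $t$ in both and sends the head to $d$ in state $p$, so the images coincide. If instead $f_3(s,q) =: d \neq d' := f_3(s',q')$, I put the first head at $0$ and the second at $e := d-d' \neq 0$, choose the two tapes to agree off $\{0,e\}$, and prescribe their values at $0$ and at $e$ so that, after $T$ overwrites position $0$ in the first configuration and position $e$ in the second, the two output tapes coincide (this is a handful of consistency conditions that are easily met); both configurations then map to the configuration with head at $d$, state $p$, and this common tape. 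Either way injectivity fails, so $(f_1,f_2)$ is injective, hence bijective.

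For the second claim, suppose $f_2(s,q) = f_2(s',q') = p$ but $f_3(s,q) =: d \neq d' := f_3(s',q')$. By the first claim $f_1(s,q) =: t \neq t' := f_1(s',q')$, and then the exact construction from the second case above — heads at $0$ and at $e = d-d'$, tapes agreeing off $\{0,e\}$ with suitably prescribed values at these two cells — again yields two distinct configurations with equal image, contradicting injectivity. Hence $f_3(s,q)$ depends on $(s,q)$ only through $f_2(s,q)$; since $(f_1,f_2)$ is a bijection, $f_2$ is surjective, so this well-defines $\phi : Q \to \{-1,0,1\}$ with $f_3 = \phi \circ f_2$, which completes the characterization and hence the proof.

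The main obstacle is the second claim together with the second case of the first: it is natural to guess that bijectivity of $(f_1,f_2)$ alone captures reversibility, but the head movement must also be consistent across state-symbol pairs that lead to the same state, and seeing this requires precisely the two-head collision argument in which the heads are offset by the difference of the two candidate move vectors. The only real care needed beyond that is bookkeeping: checking that every configuration constructed lies in $X_{n,k}$ (it has exactly one head) and that the prescribed tape values at the two relevant cells are mutually consistent — routine, but to be spelled out.
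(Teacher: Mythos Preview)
Your proof is correct and follows essentially the same approach as the paper: the key idea in both is the collision argument where two heads, offset by the difference $d-d'$ of their candidate move vectors, are sent to the same position in the same state. The paper orders the two claims oppositely --- it first proves directly that the move is determined by the entered state (your second claim), then observes that injectivity of $T$ forces injectivity of $f_T$ and hence of $(f_1,f_2)$ --- which is marginally more economical since it avoids splitting your first claim into two cases (your Case~2 of Claim~1 being a special instance of the argument used for Claim~2), but the content is the same.
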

	
	It follows that the inverse of a reversible classical Turing machine is always of the form $T_0 \circ T_1$ where $T_0$ is a state-symbol permutation and $T_1$ is a state-dependent shift. In the terminology of Section~\ref{section.subgroups}, the theorem implies that all reversible classical Turing machines are elementary.

	\section{Properties of $\RTM$ and interesting subgroups} 
	\label{section.subgroups}
	In this section we study some properties of $\RTM$ by studying the subgroups it contains. 
We introduce $\LP$, the group of local permutations where the head does not move and $\FG$, the group of (reversible) finite-state automata which do not change the tape. These groups separately capture the dynamics of changing the tape and moving the head. We also define the group of oblivious Turing machines $\OB$ as an extension of $\LP$ where arbitrary tape-independent moves are allowed, and $\EL$ as the group of elementary Turing machines, which are compositions of finite-state automata and oblivious Turing machines. 

	First, we observe that $\alpha(\RTM(\ZZd,n,k))$ is not finitely generated, and thus:
	
	\begin{restatable}{theorem}{RTMIsNotFinitelyGenerated}
	\label{RTM_is_not_finitely_generated}
		For $n \geq 2$, the group $\RTM(\ZZd,n,k)$ is not finitely generated.
	\end{restatable}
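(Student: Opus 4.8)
The plan is to use the average-movement homomorphism $\alpha : \RTM(\ZZd,n,k) \to \QQ^d$ constructed above. Homomorphic images of finitely generated groups are finitely generated, and a finitely generated subgroup of $\QQ^d$ is contained in $\tfrac1N\ZZ^d$ for $N$ the least common multiple of the denominators occurring in a finite generating set. So it suffices to show that $\alpha(\RTM(\ZZd,n,k))$ is not contained in $\tfrac1N\ZZ^d$ for any $N\in\NN$, and for this I would exhibit, for each $\ell\in\NN$, an element of $\RTM(\ZZd,n,k)$ with average movement $\tfrac1{kn^\ell}\,\vec{e}_1$.

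First I would build a ``base-$n$ odometer'' machine $T_\ell$. Work in dimension one (for $d>1$ let the machine act along the first axis and ignore the other coordinates). Let $T_\ell$ have in- and out-windows inside $\{-1,0,1,\dots,\ell\}$: at a head in position $v$ and state $q$ it interprets $(q,x_v,x_{v+1},\dots,x_{v+\ell-1})$ as a mixed-radix counter $c\in\{0,\dots,kn^\ell-1\}$. If $c<kn^\ell-1$ it rewrites this block to encode $c+1$ and leaves the head fixed. If $c=kn^\ell-1$ (state and all $\ell$ cells maximal) it steps the head one cell to the right, writes $0$ into the $\ell$ cells $x_{v+1},\dots,x_{v+\ell}$ now lying under the new head, sets the state to $0$, and stashes the previous content of $x_{v+\ell}$ into the vacated cell $x_v$. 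These conventions make the two cases distinguishable from the new head position: an overflow step produces the counter value $0$, while a non-overflow step always produces a value in $\{1,\dots,kn^\ell-1\}$, and no symbol is ever overwritten without being recorded. Consequently the ``decrementing'' odometer — decrement when the counter is nonzero, and when it is $0$ restore the maximal block, put back the stashed symbol, and step left — is a genuine two-sided inverse, so $T_\ell\in\RTM(\ZZd,n,k)$.

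It remains to compute $\alpha(T_\ell)$. The head of $T_\ell$ moves (by $\vec{e}_1$) exactly on those local configurations with state $k-1$ and all $\ell$ relevant cells equal to $n-1$, a set of $\mu$-measure $\tfrac1k\cdot\tfrac1{n^\ell}$, and is stationary otherwise; hence $\alpha(T_\ell)=\tfrac1{kn^\ell}\,\vec{e}_1$. Thus $\alpha(\RTM(\ZZd,n,k))$ meets $\tfrac1{kn^\ell}\ZZ^d$ for every $\ell$, so it is contained in no $\tfrac1N\ZZ^d$ and is therefore not finitely generated; by the first paragraph neither is $\RTM(\ZZd,n,k)$. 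I expect the one delicate point to be the construction of $T_\ell$ itself: the bookkeeping at the overflow step must be chosen so that the global map is injective and surjective — naive choices reintroduce the familiar ``a head moved right from here'' versus ``a head moved left from there'' collisions — after which one checks $T_\ell T_\ell^{-1}=T_\ell^{-1}T_\ell=\ID$ case by case.
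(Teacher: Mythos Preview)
Your proof is correct and follows essentially the same route as the paper: use the average-movement homomorphism $\alpha$, observe that finitely generated subgroups of $\QQ^d$ lie in some $\tfrac{1}{N}\ZZ^d$, and exhibit reversible machines with arbitrarily small $\alpha$. The only difference is the example machine: the paper uses the simpler $T_{\texttt{SURF},m}$ with local rule $f(0^m a,q)=(a0^m,q,1)$ and $f(u,q)=(u,q,0)$ otherwise, which already gives $\alpha=1/n^m$ without any counter or state bookkeeping---your odometer works, but the stash/overflow mechanism is more elaborate than needed.
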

	
	Although $\alpha$ is not a homomorphism on $\TM(\ZZd,n,k)$, using Theorem~\ref{theorem_injective_surjective_reversible}, we obtain that $\TM(\ZZd,n,k)$ cannot be finitely generated either.

	\subsection{Local permutations and oblivious Turing machines}
	
	For $\vec v \in \ZZd$, define the machine $T_{\vec v}$ which does not modifies the state or the tape, and moves the head by the vector $\vec v$ on each step. Denote the group of such machines by $\SHIFT(\ZZd,n,k)$. Clearly $\alpha : \SHIFT(\ZZd,n,k) \to \ZZd$ is a group isomorphism. Define also $\SP(\ZZd,n,k)$ as the \emph{state-permutations}: Turing machines that never move and only permute their state as a function of the tape.
	
	\begin{definition}
		We define the group $\LP(\ZZd,n,k)$ of \emph{local permutations} as the subgroup of reversible $(d,n,k)$-Turing machines whose shift-indicator is the constant-$\vec{0}$ function. Define also $\OB(\ZZd,n,k) = \langle \SHIFT(\ZZd,n,k), \LP(\ZZd,n,k) \rangle$, the group of \emph{oblivious Turing machines}. 
	\end{definition}
	
	In other words, $\LP(\ZZd,n,k)$ is the group of reversible machines that do not move the head, and $\OB(\ZZd,n,k)$ is the group of reversible Turing machines whose head movement is independent of the tape contents. 
	Note that in the definition of both groups, we allow changing the state as a function of the tape, and vice versa. Clearly $\SHIFT(\ZZd,n,k) \leq \OB(\ZZd,n,k)$ and $\SP(\ZZd,n,k) \leq \LP(\ZZd,n,k)$.
	
	\begin{restatable}{proposition}{LPNotResiduallyFinite}
	\label{proposition_LP_not_residually_finite}
		Let $S_{\infty}$ be the group of permutations of $\NN$ of finite support. Then for $n \geq 2$, $S_{\infty} \hookrightarrow \LP(\ZZd,n,k)$.
	\end{restatable}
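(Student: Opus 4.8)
The plan is to realize $S_{\infty}$ concretely as the group of finitely supported permutations of the positive first-coordinate ray $R = \{\, j\vec{e}_1 : j \geq 1 \,\} \subset \ZZd$, where $\vec{e}_1 = (1,0,\dots,0)$, and to send each such permutation to the local permutation that permutes the tape contents of the cells in $R$ accordingly, leaving the state and every other cell untouched. I would work in the moving-tape model $\RTM_{\fix}(\ZZd,n,k)$, which by Lemma~\ref{group_isomorphism_lemma} is isomorphic to $\RTM(\ZZd,n,k)$ since $n \geq 2$, and where the shift-indicator language of the definition of $\LP$ is native.

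First, fix two distinct symbols $0,1 \in \Sigma$ (possible because $n \geq 2$). For a permutation $\pi$ of $\{1,2,3,\dots\}$ with finite support $S$, define $M_{\pi} : \Sigma^{\ZZd}\times Q \to \Sigma^{\ZZd}\times Q$ by $M_{\pi}(x,q) = (y,q)$, where $y_{j\vec{e}_1} = x_{\pi^{-1}(j)\vec{e}_1}$ for every $j \geq 1$ and $y_{\vec{u}} = x_{\vec{u}}$ for all other $\vec{u} \in \ZZd$. Since $S$ is finite, $M_{\pi}$ is the moving-tape Turing machine given by the local rule with $F = F' = \{\, j\vec{e}_1 : j \in S \,\}$, zero displacement, and first component the coordinate permutation of $\Sigma^{F}$ induced by $\pi$; in particular it never shifts the tape, so its shift indicator is the constant $\vec{0}$ function. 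As a global map it is the coordinate permutation of $\Sigma^{\ZZd}$ supported on $R$, times the identity on $Q$, so it is a bijection with inverse $M_{\pi^{-1}}$; hence $M_{\pi} \in \LP(\ZZd,n,k)$. A direct computation from the defining formula gives $M_{\pi} \circ M_{\rho} = M_{\pi\rho}$, so $\pi \mapsto M_{\pi}$ is a homomorphism $S_{\infty} \to \LP(\ZZd,n,k)$. It is injective: if $\pi \neq \rho$, pick $j$ with $\pi^{-1}(j) \neq \rho^{-1}(j)$ and a configuration $x$ with $x_{\pi^{-1}(j)\vec{e}_1} = 0$ and $x_{\rho^{-1}(j)\vec{e}_1} = 1$; then $M_{\pi}(x,q)$ and $M_{\rho}(x,q)$ disagree at the cell $j\vec{e}_1$.

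There is no genuine obstacle here: the argument is a construction, and the only points needing care are checking that $M_{\pi}$ literally fits the formal definition of a reversible $(d,n,k)$-Turing machine with constant-$\vec{0}$ shift indicator (radius $\max S$, bijectivity witnessed by the explicit inverse $M_{\pi^{-1}}$) and noticing that $n \geq 2$ is used exactly to make the homomorphism injective --- in the degenerate case $n=1$ the tape carries no information and no such embedding can exist, which is why the hypothesis is present. As an alternative one could argue through the identification $\LP(\ZZd,n,k) = \bigcup_{r}\Sym\!\big(\Sigma^{[-r,r]^d}\times Q\big)$, an increasing union of finite symmetric groups of unbounded order, into which $S_{\infty}$ embeds; but the explicit construction above is more self-contained and also makes transparent the corollary that $\LP(\ZZd,n,k)$ (and hence $\RTM(\ZZd,n,k)$) is not residually finite.
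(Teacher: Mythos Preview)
Your proof is correct and follows essentially the same route as the paper: both map a finitely supported permutation $\pi$ to the local permutation that permutes tape cells along (a copy of) $\NN$ according to $\pi$, with zero head movement, and verify injectivity by evaluating on a configuration with a single $1$ among $0$s. The only cosmetic differences are that you work directly in dimension $d$ (along the ray $\{j\vec e_1\}$) and in the moving-tape model, whereas the paper reduces to $d=1$ in the moving-head model.
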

	
	
	In particular, $\RTM(\ZZd,n,k)$ is not residually finite.
	By Cayley's theorem, Proposition~\ref{proposition_LP_not_residually_finite} also implies that $\RTM(\ZZd,n,k)$ contains all finite groups.
	
	\begin{restatable}{proposition}{OBAmenable}
	\label{proposition_OB_amenable}
	The group $\OB(\ZZd,n,k)$ is amenable.
	\end{restatable}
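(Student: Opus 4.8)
The plan is to show that $\OB(\ZZd,n,k)$ is amenable by exhibiting it as an extension of an abelian group by a locally finite group, since amenability is closed under group extensions and both classes are amenable. Concretely, I would consider the average-movement homomorphism $\alpha$ restricted to $\OB(\ZZd,n,k)$. Its image lies in $\QQ^d$, hence is abelian. The main task is therefore to identify the kernel $K = \ker(\alpha|_{\OB})$ and show it is locally finite, i.e. every finitely generated subgroup of $K$ is finite.

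First I would analyze the structure of an oblivious machine. Since the head movement of $T \in \OB(\ZZd,n,k)$ is independent of the tape, the shift indicator $s_T : \Sigma^{\ZZd}\times Q \to \ZZd$ factors through the state, so it is really a function $s_T : Q \to \ZZd$ (more precisely, iterating $T$, the net displacement after any number of steps depends only on the state-trajectory, which itself only sees the tape through the state-permutation actions). A cleaner approach: write $\OB = \langle \SHIFT, \LP\rangle$ and observe that $\SHIFT(\ZZd,n,k)\cong\ZZd$ (via $\alpha$) while $\LP(\ZZd,n,k) \leq K$ since local permutations have shift indicator constant $\vec 0$. I would then argue that $K$ is generated by $\LP$ together with the conjugates of $\LP$ by elements of $\SHIFT$ — but conjugating a local permutation by a shift is again a local permutation (it just relocates the finite window where the permutation acts). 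Hence $K = \LP(\ZZd,n,k)$, and $\OB/\LP \cong \ZZd$ via $\alpha$, realizing $\OB$ as an extension $1 \to \LP \to \OB \to \ZZd \to 1$.

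It then remains to show $\LP(\ZZd,n,k)$ is locally finite, equivalently amenable, which would finish the proof. The key point is that a local permutation $T$ with input/output radius $r$ acts by permuting the finite set $\Sigma^{[-r,r]^d}\times Q$ (applied at the head position) and does nothing elsewhere, so $\LP(\ZZd,n,k)$ is the increasing union $\bigcup_{r} \Sym(\Sigma^{[-r,r]^d}\times Q)$ of finite symmetric groups — a locally finite group. (One must check that composition of two local permutations with radii $r_1,r_2$ is a local permutation of radius $\max(r_1,r_2)$, which is immediate since no head movement occurs.) Being a direct limit of finite groups, $\LP$ is locally finite hence amenable; then $\OB$, being an extension of an amenable (abelian) group by an amenable (locally finite) group, is amenable. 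I expect the main obstacle to be the bookkeeping in the claim $K = \LP$ — specifically verifying carefully that $\OB$ does not contain "hidden" kernel elements with nontrivial movement that cancels out over a cycle of states yet is not a local permutation; this requires tracking the head through one full period and using reversibility, but it should go through cleanly using Lemma~\ref{lem:MovingTapeDynamicalDef} and the state-independence of movement.
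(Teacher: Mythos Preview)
Your approach is essentially identical to the paper's: use $\alpha$ to exhibit $\OB$ as an extension with abelian quotient and kernel $\LP$, then observe $\LP$ is locally finite as a directed union of finite symmetric groups. The paper states this in two sentences; you supply the details the paper omits.

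One small remark: your closing worry about ``hidden kernel elements with nontrivial movement that cancels out over a cycle of states'' is unnecessary. For $T \in \OB = \langle \SHIFT, \LP\rangle$ the shift indicator is not merely state-independent but actually \emph{constant}: each shift generator moves by a fixed vector regardless of state, each local permutation moves by $\vec 0$, so any composition has constant shift indicator equal to the sum of the shift vectors appearing. Hence $\alpha(T)=0$ literally means the shift indicator is the constant $\vec 0$, which is the definition of $\LP$. No further bookkeeping is needed. (Relatedly, the image of $\alpha|_{\OB}$ is exactly $\ZZ^d$, not just contained in $\QQ^d$.)
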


	Write $H \wr G$ for the restricted wreath product.
	
	\begin{restatable}{proposition}{LamplighterEmbedding}
	\label{prop:LamplighterEmbedding}
	If $G$ is a finite group and $n \geq 2$, then $G \wr \ZZ^d \hookrightarrow \OB(\ZZd,n,k)$.
	\end{restatable}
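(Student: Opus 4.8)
The plan is to realize the restricted wreath product $G \wr \ZZ^d = \big(\bigoplus_{\vec v \in \ZZ^d} G\big) \rtimes \ZZ^d$ inside $\OB(\ZZd,n,k)$ by encoding one copy of $G$ on each block of a sublattice of the tape and using $\SHIFT(\ZZd,n,k)$ to implement the coordinate‑shift action on these copies. Since $n \geq 2$, for $\ell$ large enough we have $n^{\ell^d} \geq |G|$, so Cayley's theorem gives an injective homomorphism $\iota \colon G \hookrightarrow \Sym(\Sigma^B)$, where $B = \{0,1,\dots,\ell-1\}^d$ (identifying $\Sym(\Sigma^B)$ with the symmetric group on $n^{\ell^d}$ points). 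Working in the moving‑tape model, which is legitimate by Lemma~\ref{group_isomorphism_lemma} since $n \geq 2$, I would define for each $g \in G$ the local permutation $\pi_g \in \LP(\ZZd,n,k)$ with in‑ and out‑neighborhood $B$ that sends the tape pattern $p \in \Sigma^B$ read around the origin to $\iota(g)\cdot p$ and leaves the state and all cells outside $B$ untouched. Then $g \mapsto \pi_g$ is an injective homomorphism $G \to \LP(\ZZd,n,k)$ (it inherits this from $\iota$) with $\pi_e = \ID$.

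Next, set $\tau_i = T_{\ell e_i} \in \SHIFT(\ZZd,n,k)$ for $i = 1,\dots,d$, write $\tau^{\vec v} = \tau_1^{v_1}\cdots\tau_d^{v_d} = T_{\ell\vec v}$ for $\vec v \in \ZZ^d$, and put $\pi_g^{(\vec v)} = \tau^{\vec v}\,\pi_g\,\tau^{-\vec v}$. A direct unwinding of the moving‑tape definitions shows that $\pi_g^{(\vec v)}$ applies $\iota(g)$ to the tape cells in the block $B + \ell\vec v$ and does nothing else; in particular its shift indicator is the constant $\vec 0$, so $\pi_g^{(\vec v)} \in \LP(\ZZd,n,k)$. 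Since $\langle\tau_1,\dots,\tau_d\rangle$ is abelian we also get $\tau^{\vec u}\,\pi_g^{(\vec v)}\,\tau^{-\vec u} = \pi_g^{(\vec u + \vec v)}$. Because the blocks $\{B + \ell\vec v : \vec v \in \ZZ^d\}$ are pairwise disjoint, the machines $\pi_g^{(\vec v)}$ and $\pi_h^{(\vec w)}$ commute whenever $\vec v \neq \vec w$, so $(g_{\vec v})_{\vec v} \mapsto \prod_{\vec v}\pi_{g_{\vec v}}^{(\vec v)}$ is a well‑defined homomorphism $\Phi \colon \bigoplus_{\vec v \in \ZZ^d} G \to \LP(\ZZd,n,k)$; it is injective because a nontrivial element of the domain is mapped to a machine acting on block $B + \ell\vec v$ as $\iota(g_{\vec v}) \neq \ID$ for some $\vec v$, by injectivity of $\iota$. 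Write $H = \Phi\big(\bigoplus_{\vec v}G\big) \cong \bigoplus_{\vec v \in \ZZ^d} G$.

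Finally I would verify that $\langle H, \tau_1,\dots,\tau_d\rangle$ is an internal semidirect product isomorphic to $G \wr \ZZ^d$. The identity $\tau^{\vec u}\,\pi_g^{(\vec v)}\,\tau^{-\vec u} = \pi_g^{(\vec u+\vec v)}$ shows $H$ is normalized by each $\tau_i$, hence normal in $\langle H, \tau_1,\dots,\tau_d\rangle$. Since $\alpha$ restricts to an isomorphism $\SHIFT(\ZZd,n,k) \to \ZZd$ with $\alpha(\tau^{\vec v}) = \ell\vec v$, the subgroup $\langle\tau_1,\dots,\tau_d\rangle$ is free abelian of rank $d$, and it intersects $H$ trivially: every element of $H$ has zero shift indicator, whereas $\tau^{\vec v}$ has constant shift indicator $\ell\vec v \neq \vec 0$ for $\vec v \neq \vec 0$. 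Therefore $\langle H, \tau_1,\dots,\tau_d\rangle = H \rtimes \langle\tau^{\vec v}\rangle_{\vec v \in \ZZ^d}$, and under $\Phi^{-1}$ and $\alpha$ the conjugation action of $\langle\tau^{\vec v}\rangle \cong \ZZ^d$ on $H \cong \bigoplus_{\vec v}G$ is precisely the coordinate‑shift action defining the restricted wreath product. Hence $G \wr \ZZ^d \cong \langle H, \tau_1,\dots,\tau_d\rangle \leq \OB(\ZZd,n,k)$.

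I do not expect a genuinely hard step here; the points needing care are (i) choosing the shift sublattice to be $\ell\ZZ^d$ with the \emph{same} $\ell$ as the block size $B$, so that the per‑block copies of $G$ have pairwise disjoint supports and therefore commute as required for the base group of the wreath product, and (ii) checking that no relations collapse in the semidirect product — which reduces to injectivity of $\iota$ together with the observation that elements of $\LP(\ZZd,n,k)$ have zero shift indicator and so cannot equal a nontrivial element of $\SHIFT(\ZZd,n,k)$. The use of $n \geq 2$ is essential exactly at the step where we need $n^{\ell^d}\geq|G|$.
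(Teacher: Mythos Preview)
Your proof is correct and follows the same strategy as the paper: embed $G$ into $\LP(\ZZd,n,k)$ via Cayley's theorem and combine with shifts along a sublattice of matching period to realize the wreath product. The only cosmetic difference is that the paper embeds $G\leq S_{|G|}$ using the position-permutation machines of Proposition~\ref{proposition_LP_not_residually_finite} (and shifts by $|G|\,e_i$), whereas you embed $G$ as content-permutations of a block $\Sigma^B$ (and shift by $\ell e_i$); your write-up is in fact more careful about verifying the semidirect-product structure than the paper's sketch.
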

	
	The groups $G \wr \ZZ^d$ are sometimes called generalized lamplighter groups. In fact, $\OB(\ZZd,n,k)$ can in some sense be seen as a generalized generalized lamplighter group, since the subgroup of $\OB(\ZZd,n,k)$ generated by the local permutations $\LP(\ZZd,n,1)$ with radius $0$ and $\SHIFT(\ZZd,n,1)$ is isomorphic to $A \cong S_n \wr \ZZd$.
	
	Interestingly, just like the generalized lamplighter groups, we can show that the whole group $\OB(\ZZd,n,k)$ is finitely generated. 
	
	\begin{restatable}{theorem}{OBFG}
	\label{thm:OBFG}
		$\OB(\ZZd,n,k)$ is finitely generated.
	\end{restatable}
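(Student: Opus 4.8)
The plan is to exhibit an explicit finite generating set. Since $\OB(\ZZd,n,k)=\langle \SHIFT(\ZZd,n,k),\LP(\ZZd,n,k)\rangle$ and $\SHIFT(\ZZd,n,k)\cong\ZZd$ is generated by the $d$ unit shifts $T_{\vec{e}_1},\dots,T_{\vec{e}_d}$, it suffices to find a \emph{fixed} finite set $\mathcal{G}\subseteq\LP(\ZZd,n,k)$ with $\LP(\ZZd,n,k)\subseteq\langle\mathcal{G},T_{\vec{e}_1},\dots,T_{\vec{e}_d}\rangle$. This is not obvious a priori, since $\LP(\ZZd,n,k)$ is itself far from finitely generated: by Proposition~\ref{proposition_LP_not_residually_finite} it contains $S_\infty$. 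The first step is to record the structure of $\LP$. Taking input and output windows both equal to $W_r:=[-r,r]^d$, reversibility of a machine with constant-$\vec{0}$ shift indicator forces its local rule to be a bijection of the finite set $\Sigma^{W_r}\times Q$, and conversely every such bijection defines an element of $\LP$; composition corresponds to composition of these bijections (after extending them by the identity to a common window). Hence $\LP(\ZZd,n,k)$ is the direct limit of the finite groups $\Sym(\Sigma^{W_r}\times Q)$ along the maps ``extend the rule by the identity on the new cells'', and each shift $T_{\vec{v}}$ acts on this limit by translating the window. So the task reduces to showing that each $\Sym(\Sigma^{W_r}\times Q)$, seen as the radius-$r$ part of $\LP$, lies inside $\langle\mathcal{G},\SHIFT(\ZZd,n,k)\rangle$ for a fixed $\mathcal{G}$.

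For $\mathcal{G}$ I would take a generating set of the finite group of radius-$0$ local permutations, i.e. of $\Sym(\Sigma\times Q)$ acting on the pair (cell $\vec{0}$, state), together with a single universal reversible gate $U$ of radius $1$ (a carefully chosen element of $\Sym(\Sigma^{W_1})$ acting on a fixed block of adjacent cells and leaving the state alone). The idea is then \emph{reversible circuit synthesis}: an arbitrary target $\pi\in\Sym(\Sigma^{W_r}\times Q)$ is written as a product of conjugates $T_{\vec{v}}\,g\,T_{\vec{v}}^{-1}$ with $g\in\mathcal{G}$, each conjugate applying a gate at a chosen position. The point specific to $\OB$ is that the head moves \emph{independently of the tape}, so it may travel far out of $W_r$, use distant cells as scratch memory, and come back, as long as the net effect restores those cells; in other words we may freely use \emph{borrowed} (dirty) ancilla cells. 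Conjugating the radius-$0$ generators by shifts already makes available the full symmetric group of each individual cell and of the state, so the only genuinely new capability beyond shuffling symbols around is the gate $U$.

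The existence of the required decomposition into gates is precisely the statement that $\{U\}\cup\Sym(\Sigma)$ is a universal reversible gate set \emph{in the borrowed-ancilla model}. The classical folklore universality (Toffoli together with $\mathrm{NOT}$, say) does not suffice here, because our scratch cells hold arbitrary tape contents rather than a clean $0$; instead I would invoke the classification of reversible bit operations of Aaronson, Grier and Schaeffer~\cite{AaGrSc15}, which provides a finite gate set generating, with borrowed ancillas, all \emph{even} permutations of $\Sigma^m$ for every $m$. Odd permutations of $\Sigma^{W_r}\times Q$ are then obtained essentially for free: if $n$ is odd, any transposition from the radius-$0$ group $\Sym(\Sigma\times Q)$ extends to $\Sigma^{W_r}\times Q$ as $n^{|W_r|-1}$ disjoint transpositions, an odd number, hence is odd; if $n$ is even, one further borrowed cell fixes parity, since $\pi\otimes\mathrm{id}_{\Sigma}$ is even for every $\pi$. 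The state factor $Q$ is absorbed by keeping $\Sym(\Sigma\times Q)$ among the generators and treating $\Sigma^{W_r}\times Q$ as a mixed-radix register block, to which the synthesis argument applies unchanged; for $n>2$ one either uses a $\Sigma$-ary version of the universality theorem or block-encodes $\Sigma$ into $\{0,1\}^{\lceil\log_2 n\rceil}$ and extends the desired permutation arbitrarily over the invalid codewords.

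The main obstacle is pinning down exactly which universality statement is needed — universality with \emph{borrowed} (not clean) ancillas, over an arbitrary alphabet, and compatible with the fact that the head's itinerary must be scheduled without looking at the tape — and verifying that \cite{AaGrSc15} delivers it (or that the binary statement survives a block-encoding without breaking the ancilla bookkeeping). Everything else is routine: identifying $\LP$ with a direct limit of symmetric groups, the parity and state bookkeeping, and the simple observation that an oblivious head can shuttle to and from scratch cells and restore them.
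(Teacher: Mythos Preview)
Your proposal is correct and takes essentially the same approach as the paper: reduce to borrowed-ancilla universality of reversible gates, conjugate a fixed finite gate set by shifts to place gates at arbitrary positions, and handle the parity obstruction exactly as you describe (odd $n$ via an odd radius-$0$ extension, even $n$ via one extra borrowed cell). The paper's execution differs only in details: rather than quoting \cite{AaGrSc15} and block-encoding for $n>2$, it adapts Boykett's argument directly to the mixed-radix alphabet $Q\times\Sigma^m$ and includes in its generating set, besides shifts and all permutations of a fixed four-cell window, explicit neighbor-swap machines that act as rewiring gates---so a fixed-window permutation can be applied to an \emph{arbitrary} subset of cells, not just to translates of the window, a point your sketch leaves implicit.
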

	

	\subsection{Finite-state automata}

	\begin{definition}
		We define the \emph{reversible finite-state automata} $\FG(\ZZd,n,k)$ as the group of reversible $(d,n,k)$-Turing machines that do not change the tape. That is, the local rules are of the form $f(p,q) = (p,q',z)$ for all entries $p \in \Sigma^F, q \in Q$. 
	\end{definition}
	
	This group is orthogonal to $\OB(\ZZd,n,k)$ in the following sense:
	
	\begin{restatable}{proposition}{Orthogonal}
	\begin{align*}
	\FG(\ZZd,n,k) \cap \LP(\ZZd,n,k) &= \SP(\ZZd,n,k) \\
	\FG(\ZZd,n,k) \cap \OB(\ZZd,n,k) &= \langle \SP(\ZZd,n,k), \SHIFT(\ZZd,n,k) \rangle
	\end{align*}
	\end{restatable}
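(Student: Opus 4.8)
The plan is to prove the two set equalities by mutual inclusion, in each case using the combinatorial descriptions of the groups together with the characterization of Turing machines via their local rules (with $F = F'$ chosen large enough to serve all the rules involved simultaneously).

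\textbf{First equality.} One inclusion is immediate from the definitions: $\SP(\ZZd,n,k)$ consists of Turing machines that never move the head and only permute the state as a function of the tape, so they do not change the tape, hence lie in $\FG(\ZZd,n,k)$, and their shift-indicator is the constant-$\vec 0$ function, hence they lie in $\LP(\ZZd,n,k)$. For the reverse inclusion, let $T \in \FG(\ZZd,n,k) \cap \LP(\ZZd,n,k)$. Being in $\LP$ means the shift-indicator is constant $\vec 0$, so $T$ never moves the head; being in $\FG$ means $T$ never changes the tape. Thus $T$'s local rule $f(p,q)$ has the form $(p, q', \vec 0)$, i.e. $T$ only permutes the state as a function of the tape pattern it reads. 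This is exactly the definition of $\SP(\ZZd,n,k)$ (reversibility of $T$ forces, for each fixed pattern $p$, the induced map $q \mapsto q'$ to be a permutation of $Q$, which is the content of belonging to $\SP$). Hence equality.

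\textbf{Second equality.} Write $G = \langle \SP(\ZZd,n,k), \SHIFT(\ZZd,n,k)\rangle$. For $G \subseteq \FG(\ZZd,n,k) \cap \OB(\ZZd,n,k)$: both generating families never modify the tape (state-permutations don't, and shifts don't), and $\FG$ is a group closed under composition, so $G \subseteq \FG$; similarly $\SP \leq \LP \leq \OB$ and $\SHIFT \leq \OB$, so $G \subseteq \OB$. For the reverse inclusion, take $T \in \FG(\ZZd,n,k) \cap \OB(\ZZd,n,k)$. Since $T \in \OB$, its head movement is independent of the tape contents: choosing a common large support, the local rule has the form $f(p,q) = (p'(p,q), q'(p,q), \vec d(q))$ where $\vec d$ depends only on the state $q$ (and not on $p$). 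Since $T \in \FG$, it does not change the tape, so $p'(p,q) = p$. Therefore $f(p,q) = (p, q'(p,q), \vec d(q))$: the new state depends on the read pattern and old state, and the head moves by a vector depending only on the old state. I would then factor such a $T$ explicitly as a composition of an element of $\SP$ (the state-permutation $q \mapsto q'(p,q)$ read off the tape pattern, applied in place) followed by a state-dependent shift, the latter of which lies in $\langle\SHIFT\rangle$: a shift by $\vec d(q)$ depending only on the state can be realized by first permuting the state set so as to "route" each state through a product of the generators $T_{\vec v}$ of $\SHIFT$ — concretely, conjugating the fixed shift $T_{\vec v}$ by elements of $\SP$ produces shifts that act by $\vec v$ only on a chosen subset of states, and composing finitely many of these (for the finitely many values of $\vec d$) produces exactly the state-dependent shift, hence this factor is in $G$. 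Both factors lie in $G$, so $T \in G$.

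The main obstacle I anticipate is the last factorization step: making precise that a state-dependent shift (moving by $\vec d(q)$ when in state $q$) is generated by the tape-independent shifts $T_{\vec v}$ together with state-permutations. The subtlety is that elements of $\SHIFT(\ZZd,n,k)$ move \emph{every} state by the same vector, so one cannot get a genuinely state-dependent shift from $\SHIFT$ alone; one needs the $\SP$-conjugates and the group closure. I would argue this by a small combinatorial construction: enumerate the distinct movement vectors $\vec d_1,\dots,\vec d_m$ occurring, and build the state-dependent shift as a product over $j$ of machines that shift by $\vec d_j$ on the states with $\vec d(q) = \vec d_j$ and fix all others, each such machine being obtained from $T_{\vec d_j}$ by conjugating with a state-permutation that swaps the relevant states into and out of a "parking" configuration where the shift does nothing — care is needed to ensure these partial shifts commute appropriately or are composed in a consistent order, which is where the bookkeeping lives. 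Everything else is a direct unwinding of the definitions and the local-rule normal form with a common support.
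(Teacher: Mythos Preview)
Your first equality is correct and matches the paper. For the second equality there is a genuine gap in your understanding of $\OB$. You deduce from $T \in \OB(\ZZd,n,k)$ only that the movement vector is tape-independent, writing it as $\vec d(q)$. But $\OB(\ZZd,n,k)$ is \emph{defined} as $\langle \SHIFT(\ZZd,n,k), \LP(\ZZd,n,k) \rangle$, and this forces the movement to be a single fixed vector $\vec d$, independent of both tape \emph{and} state: elements of $\LP$ never move the head and elements of $\SHIFT$ move it by a vector independent of everything, so any word in these generators moves the head by the sum of the shift vectors appearing in it. Once you know this, the reverse inclusion is immediate: $T_{-\vec d} \circ T$ lies in $\FG \cap \LP = \SP$ by the first equality, so $T \in \SHIFT \cdot \SP \subset \langle \SP, \SHIFT \rangle$. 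This is what the paper's one-line proof (``precisely the unconditional shifts have been added'') is saying.

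Your proposed workaround for the ``obstacle'' cannot succeed, and it is worth seeing why. Conjugating $T_{\vec v} \in \SHIFT$ by any $\pi \in \SP$ still moves the head by $\vec v$ in every state: $\pi$ does not move, then $T_{\vec v}$ moves by $\vec v$ unconditionally, then $\pi^{-1}$ does not move. There is no ``parking configuration where the shift does nothing''. More to the point, a genuinely state-dependent shift (say, move by $e_1$ in state $1$ and stay put in state $2$) is \emph{not} in $\langle \SP, \SHIFT \rangle$ at all, by the constant-movement observation above --- and indeed not in $\OB$ either. So had such a map actually arisen from $T \in \FG \cap \OB$, your decomposition could not have been completed; the resolution is that it never arises.
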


	As usual, the case $n = 1$ is not particularly interesting, and we have that $\FG(\ZZd,1,k) \cong \RTM(\ZZd,1,k)$. 
	In the general case the group is more complex. 
	
	We now prove that the $\FG(\ZZd,n,k)$-groups are non-amenable. In \cite{ElMo12}, a similar idea is used to prove that there exists a minimal $\ZZ^2$-subshift whose topological full group is not amenable.
	
	
	\begin{restatable}{proposition}{FreeProductEmbedding}
		Let $n \geq 2$. For all $m \in \NN$ we have that:
		$$\underset{m \text{ times}}{\underbrace{\ZZ/2\ZZ \ast \dots \ast \ZZ/2\ZZ}} \hookrightarrow \FG(\ZZd,n,k)$$
	\end{restatable}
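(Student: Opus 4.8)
The plan is to build a concrete free product of $m$ copies of $\ZZ/2\ZZ$ inside $\FG(\ZZd,n,k)$ by exhibiting $m$ involutions whose only relations are the trivial ones. Since $\FG(\ZZd,n,k)$ consists of finite-state automata (Turing machines that read the tape but never modify it), each generator will be a reversible machine that moves a single head around, with its motion dictated by what it reads on the tape; the tape itself serves as an infinite reservoir of "choices" that let us defeat any putative reduced word. It suffices to work with $k=1$ state and then invoke the inclusions $\FG(\ZZd,n,1)\hookrightarrow\FG(\ZZd,n,k)$ and $\FG(\ZZ,n,k)\hookrightarrow\FG(\ZZd,n,k)$ (reading only the first coordinate) to get the general statement; and by the tape-size remarks it is enough to handle a convenient alphabet size, e.g.\ $n$ large enough to encode $m$ "directions" plus a neutral symbol, or to simulate such an alphabet inside $\{0,1\}^\ZZ$ by blocking.

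The key construction, following the idea of \cite{ElMo12}, is a ping-pong argument. For each $i\in\{1,\dots,m\}$ I would define an involution $g_i\in\FG(\ZZd,n,1)$ as follows: when the head sits on a cell, it looks at the local pattern; if a certain marker pattern associated to $i$ is present, the head performs a fixed nontrivial displacement (and toggles some bounded bookkeeping encoded in how it reads, not writes — but since we cannot write, the bookkeeping must instead be read off the tape, so in fact $g_i$ should be defined purely by "read the neighborhood, move accordingly", and being an involution is arranged by making the move depend only on the symbol under the head in a way that the reverse move is triggered by the symbol the head lands on). Concretely, the cleanest version: let the tape alphabet contain symbols $a_1,\dots,a_m$ and $\ast$; the machine $g_i$ scans right from the head until it sees $a_i$, hops over it, and that is the new head position — its square returns because scanning left from the new position meets $a_i$ again. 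One checks $g_i^2=\ID$ and $g_i\in\FG$ (it has bounded radius only if we bound the scan, so instead one uses fixed finite displacements keyed to the immediate neighborhood — this is the point to be careful about).

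Then the ping-pong: one designates, for each reduced word $w=g_{i_\ell}\cdots g_{i_1}$ with $i_{j+1}\ne i_j$, a configuration $x_w$ (built by concatenating the marker blocks $a_{i_1},a_{i_2},\dots$ in the order prescribed by $w$, padded with $\ast$'s) on which the head, starting at $0$, is carried by $w$ to a position that records the whole history of the word, so that $w$ acts nontrivially; in particular distinct reduced words act differently, hence $w\ne\ID$ in $\FG$. Equivalently and more robustly, one sets up disjoint "territories" in configuration space — for instance, for each $i$ let $P_i$ be the set of (configuration, head) pairs in which the symbol immediately to the left of the head is $a_i$ — and shows $g_j(P_i^c)\subseteq P_j$ for $j\ne i$ in the appropriate sense, which is exactly the table-tennis lemma hypothesis and yields the free product $\langle g_1\rangle\ast\cdots\ast\langle g_m\rangle\cong (\ZZ/2\ZZ)^{\ast m}$.

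The main obstacle I anticipate is the tension between \emph{bounded radius} (every element of $\FG(\ZZd,n,k)$ has a finite in- and move-radius, so a head cannot "scan until it finds a marker") and the need for each $g_i$ to be a genuine involution with moves that see the whole relevant marker. The resolution — and the technical heart of the proof — is to encode the marker information \emph{locally} so that a single bounded-radius read determines a fixed move: e.g.\ use a sparse set of "active" positions whose neighborhoods already spell out the needed data, and make $g_i$ move the head by a fixed vector $\vec v_i$ whenever it reads pattern "$i$ to my immediate right", with the $\vec v_i$ and the marker geometry chosen (pairwise non-parallel / growing scales, as in \cite{ElMo12}) so that applying a reduced word never causes two heads' trajectories to interfere and never returns the head to its origin in the same local state. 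Verifying this non-interference for all reduced words is the calculation I would not grind through here, but it is exactly the kind of geometric bookkeeping that the $\ZZ^2$ (or $\ZZd$) setting makes possible and that forces the use of dimension or at least of several independent marker scales.
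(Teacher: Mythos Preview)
Your high-level strategy is the right one and matches the paper's: exhibit $m$ involutions in $\FG$ and, for each nontrivial reduced word $w$, build a tape configuration on which $w$ moves the head away from its starting point. But your proposal never actually defines the involutions, and the ``main obstacle'' you flag --- tension between bounded radius and being an involution --- dissolves with a construction far simpler than the one you sketch.

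The paper works in $\FG(\ZZ,m,k)$ with alphabet $\Sigma$ of size $m$ and defines, for each $a\in\Sigma$, a radius-$1$ machine $T_a$: if $x_0=a$ and $x_1\neq a$, move right; if $x_{-1}=a$ and $x_0\neq a$, move left; otherwise stay. The two rules are mutually exclusive and each is the inverse of the other, so $T_a^2=\ID$ is immediate. There is no scanning, no sparse active set, no growing scales, no non-parallel displacement vectors borrowed from the minimal-subshift argument of \cite{ElMo12}; the head simply steps off an $a$ onto a non-$a$, and stepping back undoes it. Given a reduced word $w=T_{a_{i_t}}\cdots T_{a_{i_1}}$ with $i_j\neq i_{j+1}$, one writes the symbols $i_1,\dots,i_t$ in cells $0,\dots,t-1$, puts anything $\neq i_t$ in cell $t$, and fills the rest arbitrarily: each $T_{a_{i_j}}$ sees its own letter under the head and a different letter to the right, so the head advances one cell per factor and ends $t$ cells to the right. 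That is the entire proof.

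So the gap is concrete: you correctly reject the unbounded ``scan until you see $a_i$'' machine, but the replacement you gesture at is never specified, and the geometric bookkeeping you anticipate is unnecessary in the full-shift setting. The freedom to write \emph{any} configuration is precisely what lets you lay $w$ itself on the tape and have radius-$1$ involutions read it off letter by letter; the delicacy of \cite{ElMo12} comes from minimality constraints that are absent here.
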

	
	\begin{corollary}
		For $n \geq 2$, $\FG(\ZZd,n,k)$ and $\RTM(\ZZd,n,k)$ contain the free group on two elements. In particular, they are not amenable.
	\end{corollary}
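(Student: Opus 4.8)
The plan is to realize the free product $\ast_{m} \ZZ/2\ZZ$ inside $\FG(\ZZd,n,k)$ by building $m$ involutions in $\FG$ whose group relations are controlled by the ping-pong lemma. Since it suffices (by the embedding of $\ZZd$ into $\ZZ^{d'}$ for $d \le d'$ noted in the excerpt, and since ignoring extra coordinates is harmless) to work in dimension $1$, and since $\FG(\ZZ,n,k)$ contains $\FG(\ZZ,2,1)$ for $n \ge 2$, I would reduce to constructing the embedding in $\FG(\ZZ,2,1)$, i.e. among reversible finite-state automata with a single state that read a bounded window of a binary tape and move the head by a bounded vector without ever rewriting the tape. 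Each generator will act on a configuration by scanning the tape, and — depending on the local pattern it sees — either stepping in a fixed direction or "bouncing" so as to stay put; the point is that each such machine is its own inverse, so it realizes $\ZZ/2\ZZ$, and the interaction of several of them on a suitable configuration realizes the free product.

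**Key steps, in order.** First I would fix for each $i \in \{1,\dots,m\}$ a ``marker pattern'' $w_i \in \{0,1\}^{*}$ (a distinct finite word, chosen so the $w_i$ are mutually non-overlapping and distinguishable by a bounded window), and define the involution $g_i \in \FG(\ZZ,2,1)$ to be the machine that moves the head to the right one step unless doing so would bring it onto (or across) an occurrence of $w_i$, in which case it instead moves left so that applying $g_i$ twice returns the head exactly to its starting cell. One must check that $g_i$ is well-defined by a bounded local rule (it only needs to see a window of size $O(|w_i|)$ around the head and the tape is never modified, so it lies in $\FG(\ZZ,2,1)$) and that $g_i \circ g_i = \ID$, which is the design goal. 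Second, I would exhibit a configuration — a bi-infinite binary word in which long blocks of $0$s are separated by the markers $w_1,\dots,w_m$ arranged in a Schreier-graph-like pattern — together with a head position, on which the orbit of the head under the submonoid generated by $g_1,\dots,g_m$ mimics the Cayley graph of $\ast_m \ZZ/2\ZZ$: reading a reduced word $g_{i_1} g_{i_2} \cdots g_{i_\ell}$ (with no two consecutive indices equal) drives the head to a position determined bijectively by that reduced word. Third, from this I would run the ping-pong argument: the ``ping-pong sets'' are sets of configurations-with-head distinguished by which marker the head most recently bounced off, and non-triviality of every reduced word follows because the head ends up in a position (relative to the markers) that encodes the last letter, so the global map is not the identity. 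Hence the natural map $\ast_m \ZZ/2\ZZ \to \FG(\ZZ,n,k)$ is injective.

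**The corollary** then follows by a standard fact: the free product $\ZZ/2\ZZ \ast \ZZ/2\ZZ \ast \ZZ/2\ZZ$ contains the free group $F_2$ of rank two (for instance, $ab$ and $bc$ generate a free subgroup, where $a,b,c$ are the three order-two generators), so taking $m = 3$ in the Proposition gives $F_2 \hookrightarrow \FG(\ZZd,n,k) \le \RTM(\ZZd,n,k)$; and a group containing $F_2$ is non-amenable, since amenability passes to subgroups while $F_2$ is the prototypical non-amenable group.

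**The main obstacle** I anticipate is the simultaneous bookkeeping in step two: arranging the markers $w_i$ and choosing the base configuration so that the ``bounce'' dynamics of all $m$ machines, each of which can only see a bounded window, genuinely reproduces the tree structure of the free product rather than collapsing some reduced word to the identity or conflating two distinct reduced words. Getting the markers non-overlapping and far enough apart that a single machine's local rule unambiguously detects ``am I about to hit $w_i$?'', while still having the head land, after a reduced word, in a position that faithfully records that word, is the delicate combinatorial core — this is exactly where the technique of \cite{ElMo12} is adapted, and I would follow their marker construction closely, the novelty here being only that we have the whole tape (a full shift) to work with rather than a minimal subshift, which if anything makes the marker placement easier.
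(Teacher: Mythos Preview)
Your derivation of the Corollary from the Proposition is exactly the paper's: take $m=3$, use that $\ZZ/2\ZZ \ast \ZZ/2\ZZ \ast \ZZ/2\ZZ$ contains $F_2$, and invoke that amenability passes to subgroups. The reductions to $d=1$ and to a single state are also fine.

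The gap is in your construction of the involutions $g_i$. You describe $g_i$ as ``move the head to the right one step unless doing so would bring it onto $w_i$, in which case move left.'' But a machine that moves right by one step on a generic configuration cannot be an involution: applying it twice moves right by two. An order-two element of $\FG$ must act as a \emph{pairing} of positions, not as a barrier-with-drift. Compare the paper's machine $T_a$ (for $a\in\Sigma$, with $n=m$): it swaps the head between positions $p$ and $p+1$ precisely when $x_p=a$ and $x_{p+1}\neq a$, and does nothing otherwise. This is manifestly an involution, and the binary-alphabet version you want should be built the same way, with the single symbol $a$ replaced by a marker word $w_i$ and ``$x_p=a$'' replaced by ``the block ending at $p$ is an occurrence of $w_i$.'' Your phrase ``so that applying $g_i$ twice returns the head to its starting cell'' is the right \emph{requirement}, but your rule as stated does not meet it.

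A smaller point: your step two (building one Schreier-graph-like configuration and running ping-pong) is more machinery than needed. The paper, for each reduced word $w=w_1\cdots w_t$, simply writes down a tailor-made configuration $x(w)$ encoding the letter $w_j$ at position $j-1$; the composed machine $T_{w_t}\circ\cdots\circ T_{w_1}$ then walks the head $t$ steps to the right on $x(w)$, so it is not the identity. This per-word witness avoids the delicate global marker bookkeeping you flag as your main obstacle.
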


	By standard marker constructions, one can also embed all finite groups and finitely generated abelian groups in $\FG(\ZZd,n,k)$ -- however, this group is residually finite, and thus does not contain $S_\infty$ or $(\QQ, +)$.
	
	\begin{proposition}
	Let $n \geq 2$ and $G$ be any finite group or a finitely generated abelian group. Then $G \leq \FG(\ZZd,n,k)$.
	\end{proposition}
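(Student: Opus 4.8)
The plan is to embed each $G$ into $\FG(\ZZd,n,k)$ by realizing it as a group of ``marker-controlled'' head motions, using the tape only to locate a privileged finite region (a marker pattern) without ever modifying it. First I would reduce to the case $k=1$: by the trivial inclusion $\FG(\ZZd,n,1)\hookrightarrow\FG(\ZZd,n,k)$ (ignore the extra states), it suffices to embed $G$ into $\FG(\ZZd,n,1)$, and for that I may also assume $d=1$, since $\FG(\ZZ,n,1)\hookrightarrow\FG(\ZZd,n,1)$ by ignoring the coordinates outside the first axis. For the finitely generated abelian case, by the structure theorem $G\cong\ZZ^a\times F$ with $F$ finite, and since the construction below will produce a direct product of the pieces it is enough to handle $G=\ZZ$ and $G$ finite separately.

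For $G$ finite, the idea is: since $n\geq 2$ we can fix a finite marker word $w\in\Sigma^{*}$ that is ``isolated'' in the sense that its occurrences in any configuration are either nonexistent or pairwise far apart (e.g.\ $w=10^{N}1$ with $N$ large, and we only act when the local window certifies that we sit inside a maximal $0$-block flanked by $1$'s of the right length; if the certification fails the machine does nothing). Near such a marker, the head has a bounded ``arena'' of cells whose contents are irrelevant, and inside a bounded arena a reversible finite-state automaton can perform any permutation of the finitely many (head-position, head-state) configurations local to that arena — in particular any element of a faithful permutation representation of $G$ (Cayley). Each generator of $G$ becomes such a machine; these are reversible because locally they just permute a finite set and are the identity away from valid markers; and the map is a homomorphism because compositions of these arena-local permutations compose as in $G$. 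Faithfulness follows by evaluating on a configuration that actually contains a valid marker.

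For $G=\ZZ$, I would instead build a single infinite-order element of $\FG(\ZZd,n,1)$: again using an isolated marker $w$, define a machine that, when the head is inside a valid arena, takes one step in a fixed direction along the arena, and when it would exit the arena it ``bounces'' or cycles so that the map stays globally bijective and tape-preserving; more simply, since $\ZZ$ already embeds via $\SHIFT(\ZZ,n,1)\cong\ZZ\leq\FG(\ZZ,n,1)$ (the plain shift changes no tape symbol), the infinite cyclic factor is immediate and one only needs to combine it with the finite factor — which is possible because the two constructions can be made to act on disjoint ``slots'' of the tape (using a larger marker with two designated sub-arenas), so their generators commute and generate $\ZZ^a\times F$ with the right presentation. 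The main obstacle I expect is the bookkeeping needed to guarantee \emph{global} bijectivity: one must check that configurations containing many (far-apart) markers, or no markers, or partial/invalid marker patterns, are all handled by an honest permutation of $X_{n,k}$ — this is where the ``isolated marker'' property and the ``do nothing unless the full certificate is present'' convention do the real work, and it is the same style of marker argument alluded to after the statement, so I would cite the standard technique rather than belabor it.
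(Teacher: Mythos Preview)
The paper gives no proof beyond the phrase ``by standard marker constructions,'' so I can only compare your sketch to what such a construction must actually accomplish.

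Your treatment of the finite case is correct and is the intended argument: an isolated marker word delimits bounded arenas, within which the head position ranges over a fixed finite set large enough to carry a faithful permutation action of $G$ via Cayley's theorem; each generator of $G$ becomes an arena-local permutation of head positions, globally bijective because it is a permutation on each arena and the identity elsewhere, and faithfulness is witnessed on any configuration containing a valid marker.

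The finitely generated abelian case, however, has a real gap. Both of your proposals for the infinite cyclic factor fail to combine with the finite factor:
\begin{itemize}
\item Your marker-based machine that ``takes one step along the arena and bounces or cycles at the boundary'' has \emph{finite} order. Any finite-state automaton that is the identity outside a family of bounded arenas permutes only a fixed finite set of head positions per arena, so its order divides $|{\rm arena}|!$. Hence this does not realise $\ZZ$, and putting it on a ``sub-arena'' of a larger marker does not help.
\item The plain shift $\sigma$ does realise $\ZZ$, but it cannot sit alongside a nontrivial marker-based machine in a direct product. In the moving-tape model, if $T\in\FG(\ZZ,n,1)$ commutes with $\sigma$ then its shift-indicator $s$ satisfies $s\circ\sigma=s$, hence is constant, so $T\in\langle\sigma\rangle$. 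Thus the centraliser of $\sigma$ in $\FG(\ZZ,n,1)$ is just $\langle\sigma\rangle$, and your ``disjoint slots'' picture cannot have the unconditional shift occupying one slot.
\end{itemize}
Consequently your sketch, after the reduction to $d=1$, covers only the two extreme cases $G$ finite and $G\cong\ZZ$; it does not produce $\ZZ\times F$ for nontrivial finite $F$, nor $\ZZ^a$ for $a\geq 2$. A correct marker construction for these cases has to let the arenas have \emph{unbounded} length (so that the local orders have infinite lcm) while keeping every generator of bounded radius, and then arrange the generators to commute within each arena without introducing extra relations --- this is where the actual work lies, and it is not achieved by allotting each factor a fixed-size sub-arena.
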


	\begin{restatable}{theorem}{FGNotFinitelyGenerated}
	\label{thm:FGFinitelyGenerated}
	Let $n \geq 2, k \geq 1, d \geq 1$. Then the group $\FG(\ZZ^d,n,k)$ is residually finite and is not finitely generated.
	\end{restatable}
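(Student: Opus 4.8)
The plan is to establish the two assertions separately, starting with residual finiteness, which is the harder one. For residual finiteness, I would fix a nontrivial element $T \in \FG(\ZZ^d,n,k)$ with radius $r$ and exhibit a finite quotient (equivalently, a finite group on which $T$ acts nontrivially) separating $T$ from the identity. Since $T$ is nontrivial, there is some configuration--state pair on which $T$ acts nontrivially; because $T$ has radius $r$ and never changes the tape, this nontriviality is witnessed already by the behaviour of the head near a pattern of bounded size. The key idea, as hinted in the footnote referring to \cite{BoLiRu88}, is to pass to an \emph{enlarged system}: instead of restricting $T$ to a subshift of periodic points (which need not exist with the right pattern in the $\FG$ setting), I consider configurations on a large periodic tape in $(\ZZ/N\ZZ)^d$ carrying \emph{many} heads simultaneously. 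Concretely, let $Y_N$ be the finite set of maps $(\ZZ/N\ZZ)^d \to \Sigma \times (Q \cup \{0\})$ (with no constraint on the number of heads), and observe that the local rule of $T$, being local and tape-preserving, induces a well-defined map $T_N : Y_N \to Y_N$ acting on each head in parallel; since $T$ never writes on the tape, there is no conflict between heads as far as the tape is concerned, and one must only check that heads do not collide into the same cell, which can be avoided by choosing $N$ large and, if necessary, tracking heads with multiplicities or working in a slightly larger finite state set that records a bounded pile-up of heads. One then argues $T_N$ is a bijection of the finite set $Y_N$ (it is the parallel action of a reversible local rule, and reversibility is preserved under this periodisation because the inverse rule periodises the same way), giving a homomorphism $\FG(\ZZ^d,n,k) \to \Sym(Y_N)$; and for $N$ large enough relative to $r$ and the witnessing pattern, $T_N \neq \ID$. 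This yields residual finiteness.

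For the statement that $\FG(\ZZ^d,n,k)$ is not finitely generated, I would use a radius argument combined with the non-amenability-type rigidity already available: the subgroup generated by any finite set $S \subseteq \FG(\ZZ^d,n,k)$ consists of machines of bounded in-radius (since composing machines of in-radius $\le r$ yields a machine of in-radius $\le$ some function of the number of factors only through... no --- in fact composition can grow the radius). So instead I would exhibit, for each $r$, a machine $T_r \in \FG(\ZZ^d,n,k)$ that \emph{cannot} be written as a product of machines each of in-radius $< r$; the cleanest such invariant is again the radius itself: if $S$ is finite, all elements of $S$ have in-radius at most some fixed $r_0$, but composition is subadditive in a suitable sense only when... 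Here the honest route is: find an infinite family $(T_r)$ whose images under a fixed homomorphism $\FG(\ZZ^d,n,k) \to A$ for an infinite abelian (or infinitely-generated) group $A$ generate an infinitely generated subgroup of $A$. The natural candidate is to restrict attention to $n\ge 2$ and build, for each $r$, a finite-state automaton that reads a single symbol at distance $r$ and moves accordingly, producing "displacement profiles" that are linearly independent over $\ZZ$; mapping these to an appropriate $\ZZ^{(\infty)}$-valued invariant (a refinement of $\alpha$ that records, for each finite pattern $p$, the net displacement contributed by $p$) shows the image is not finitely generated, hence neither is $\FG(\ZZ^d,n,k)$.

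I expect the main obstacle to be the collision problem in the periodisation argument for residual finiteness: when several heads are placed on the finite torus $(\ZZ/N\ZZ)^d$, the local rule of a finite-state automaton moves each of them, and two heads may land on the same cell, so the naive parallel map is not well-defined on $Y_N$ with the "at most one head" constraint. The fix --- allowing configurations to carry a bounded number of heads per cell, or equivalently working in the enlarged system where the head-subshift constraint is dropped and states record small multisets of heads --- must be set up so that (i) $T$'s rule still induces a map, (ii) that map is still bijective (so one gets a genuine finite quotient, not just a finite set with a map), and (iii) nontriviality of $T$ survives to $Y_N$ for large $N$. Verifying (ii) is where care is needed: one should check that the inverse automaton $T^{-1}$, applied in parallel, genuinely inverts $T_N$ on the enlarged state set, which requires that the enlargement be chosen compatibly for $T$ and $T^{-1}$ simultaneously. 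Once the enlarged periodic system is correctly defined, the rest is routine.
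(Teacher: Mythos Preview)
Your residual-finiteness argument is on the right track but overcomplicated, and your non-finite-generation argument has a genuine gap.

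\medskip
\textbf{Residual finiteness.} You correctly anticipate the collision problem when several heads move on a torus, and propose to fix it by allowing multisets of heads per cell. But this fix is fragile: the multiplicity is not \emph{a priori} bounded, so either your finite set $Y_N$ becomes infinite, or you truncate and lose bijectivity. The paper sidesteps the issue entirely. It takes $Y_m$ to be the set of $m$-periodic configurations in $(\Sigma\times(Q\cup\{0\}))^{\ZZ^d}$ whose heads sit exactly on a coset $\vec u + m\ZZ^d$. Because the tape is $m$-periodic and the heads are $m$-periodically placed, every head sees the \emph{same} local pattern and carries the \emph{same} state, so all heads move by the same vector and enter the same new state. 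The image is again in $Y_m$; no collision can ever occur, and the induced map $\phi(T):Y_m\to Y_m$ is well-defined and injective (hence bijective on the finite set $Y_m$) directly from injectivity of $T$. This gives, for each $m$, a genuine homomorphism $\phi:\FG(\ZZ^d,n,k)\to\Sym(Y_m)$, and choosing $m$ larger than the radius of a witnessing pattern separates any nontrivial $T$ from the identity.

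\medskip
\textbf{Non-finite-generation.} Your ``refinement of $\alpha$ recording, for each pattern $p$, the net displacement contributed by $p$'' is not a homomorphism. The shift indicator satisfies $s_{T_1\circ T_2}(x,q)=s_{T_1}(T_2(x,q))+s_{T_2}(x,q)$; integrating over the whole space is additive because reversible $T_2$ preserves $\mu$, but integrating over a fixed cylinder $[p]$ is not, since $T_2$ moves that cylinder to a different set. So the map you describe does not factor through the group law, and you cannot conclude that its image is a quotient of $\FG$. (Recall also that the genuine homomorphism $\alpha$ lands in $\QQ_k^d=\tfrac{1}{k}\ZZ^d$, which \emph{is} finitely generated, so $\alpha$ itself cannot help here.)

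The paper instead reuses the homomorphisms $\phi:\FG(\ZZ^d,n,k)\to\Sym(Y_m)$ from the residual-finiteness argument and composes with the sign, obtaining for each $m$ a homomorphism to $\ZZ/2\ZZ$. Bundling these gives a homomorphism to $\prod_m\ZZ/2\ZZ$, and one checks by explicit construction that every eventually-trivial sequence of signs is realised by some finite-state automaton. Hence the image contains $\bigoplus_m\ZZ/2\ZZ$, which is not finitely generated, and neither is $\FG(\ZZ^d,n,k)$. The point is that ``sign of a permutation'' is automatically a group homomorphism, whereas your displacement-per-pattern invariant is not.
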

	
	The proof of this theorem is based on studying the action of the group on finite subshifts where heads are occur periodically. Non-finitely generatedness is obtained by looking at signs of permutations of the finitely many orbits, to obtain the \emph{sign homomorphism} to an infinitely generated abelian group. 

	\subsection{Elementary Turing machines and the LEF property of $\RTM$}
	
	\begin{definition}
		We define the group of elementary Turing machines $\EL(\ZZd,n,k) := \langle \FG(\ZZd,n,k),\LP(\ZZd,n,k)\rangle$. That is, the group generated by machines which only change the tape or move the head.
	\end{definition}
	
	
	
	\begin{restatable}{proposition}{Qp}
		Let $\QQ_p = \frac{1}{p}\ZZ$. Then $\alpha(\FG(\ZZd,n,k)) = \alpha(\EL(\ZZd,n,k)) = \QQ_k^d$. In particular, $\EL(\ZZd,n,k) \subsetneq \RTM(\ZZd,n,k)$. 
	\end{restatable}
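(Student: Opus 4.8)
The plan is to compute $\alpha$ on the two generating families of $\EL(\ZZd,n,k)$ separately, and then combine. Recall $\EL(\ZZd,n,k) = \langle \FG(\ZZd,n,k), \LP(\ZZd,n,k)\rangle$, and that $\alpha$ is a homomorphism on $\RTM(\ZZd,n,k)$ with image in $\QQ^d$. Since local permutations have shift indicator constantly $\vec 0$, we get $\alpha(\LP(\ZZd,n,k)) = \{\vec 0\}$, so $\alpha(\EL(\ZZd,n,k)) = \alpha(\FG(\ZZd,n,k))$ and it suffices to determine the latter.

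For the containment $\alpha(\FG(\ZZd,n,k)) \subseteq \QQ_k^d$: a finite-state automaton $T \in \FG(\ZZd,n,k)$ has a shift indicator $s(x,q)$ that depends only on $q$ and on finitely many tape symbols. When we integrate over the uniform measure $\mu$ (product of uniform Bernoulli on $\Sigma^{\ZZd}$ with uniform on $Q$), the average is a sum of integer vectors each weighted by a rational of the form (count)$/(k n^{|F|})$. But because $T$ does not modify the tape, reversibility forces strong combinatorial constraints: for each fixed tape content $x$, the induced map on states $q \mapsto T(x,q)_2$ together with the move is "locally reversible" in a way that lets us sum the displacement vectors over the $k$ states. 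The cleanest route is to observe that $T$ restricted to the (shift-invariant) set of configurations with a single head, acting on the $q$-coordinate over a fixed periodic tape, is a permutation, and summing the move vectors over one period yields a multiple of $1/k$ per coordinate after dividing by the orbit length — more directly, one shows $k\,\alpha(T) \in \ZZ^d$ by noting that $\sum_{q \in Q} s(x,q)$, averaged over $x$, is an integer vector (the $n^{|F|}$ in the denominator cancels since the same integer sum appears for each tape pattern, or is handled by a telescoping/reversibility argument as in the proof that reversible cellular automata have integer-denominator drift). Hence $\alpha(T) \in \frac1k \ZZ^d = \QQ_k^d$.

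For the reverse containment $\QQ_k^d \subseteq \alpha(\FG(\ZZd,n,k))$: I would exhibit, for each standard basis vector $e_i$, a reversible finite-state automaton with average movement exactly $\frac1k e_i$. Take $k$ states cycling $1 \to 2 \to \cdots \to k \to 1$, where on state $j < k$ the head does not move (only advances the state) and on state $k$ the head moves by $e_i$ and resets to state $1$; this is manifestly reversible (the inverse runs the cycle backwards), does not touch the tape, and has shift indicator independent of $x$, with $s = e_i$ on the fraction $1/k$ of the measure where the state is $k$, giving $\alpha = \frac1k e_i$. Composing such machines and their inverses (and using $\SHIFT$ to get full integer moves) generates all of $\QQ_k^d$, establishing equality. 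Finally, since $\alpha(\RTM(\ZZd,n,k)) = \QQ^d$ (this follows from Theorem~\ref{RTM_is_not_finitely_generated}'s underlying observation that $\alpha(\RTM)$ is not finitely generated, and indeed one can realize arbitrarily small average movement, e.g. $\frac{1}{n^m}$ along a coordinate by a machine that reads $m$ tape symbols before committing to a move), and $\QQ_k^d \subsetneq \QQ^d$, we conclude $\EL(\ZZd,n,k) \subsetneq \RTM(\ZZd,n,k)$.

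The main obstacle is the upper bound $\alpha(\FG) \subseteq \QQ_k^d$: one must rule out denominators involving $n$. The subtlety is that the shift indicator of a finite-state automaton genuinely can depend on many tape symbols, so a naive counting argument only gives denominator $k n^{|F|}$; the point is that reversibility, combined with the fact that the tape is never modified, forces the sum of displacement vectors over the $k$ states (for each fixed tape pattern) to be a fixed integer vector independent of that pattern — or more carefully, forces the $n$-factors to cancel when averaging. I would prove this by the same telescoping/cocycle argument used for the drift homomorphism of reversible cellular automata (cf. \cite{Ka96}): measure-preservation (Theorem~\ref{theorem_injective_surjective_reversible}) implies $\EE_\mu(s_T \circ T^{-1}) = \EE_\mu(s_T)$ and, pairing this with the explicit combinatorial structure of tape-preserving maps, pins the denominator to $k$.
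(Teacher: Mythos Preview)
Your lower bound and the reduction $\alpha(\EL)=\alpha(\FG)$ are correct and essentially match the paper. The cycling machine you describe for realizing $\frac{1}{k}e_i$ is precisely the machine $T_j$ in the paper's proof.

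The upper bound $\alpha(\FG(\ZZd,n,k))\subseteq \QQ_k^d$ is where your argument has a real gap. You correctly identify it as the main obstacle, but none of the three approaches you sketch is carried out:
\begin{itemize}
\item The claim that $\sum_{q\in Q} s(x,q)$ is the \emph{same} integer vector for every tape pattern $x$ is never justified, and it is not obvious why reversibility plus tape-preservation should force this pointwise. (If true, it would give a cleaner proof than the paper's; but you would need to prove it.)
\item The reference to a Kari-style telescoping argument stays at the level of analogy.
\item The identity $\EE_\mu(s_T\circ T^{-1})=\EE_\mu(s_T)$ holds for \emph{every} $T\in\RTM$, not just finite-state automata, so it cannot by itself single out the denominator $k$.
\end{itemize}

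The paper's route is different and worth knowing. It does not try to show anything pointwise about $\sum_q s(x,q)$. Instead it picks one distinguished configuration, the all-zero tape, where the drift is trivially $\vec v=\frac{1}{k}\sum_q \vec v_q\in\QQ_k^d$, and then proves the a priori surprising equality $\alpha(T)=\vec v$. After composing with the $T_j$ to reduce to $\vec v=\vec 0$, it approximates $\alpha(T)$ by finite averages $\alpha_m(T)$ over patterns in $[-m,m]^d$ completed with zeros, and observes that for each such finitely-supported configuration $x_p$, the head action is a permutation of the (position, state) pairs. Summing displacements over a permutation telescopes, so the total displacement over \emph{all} positions and states on $x_p$ is $\vec 0$; since outside the box the contribution per position is $\sum_q\vec v_q=\vec 0$, the contribution inside the box is $o(m^d)$, forcing $\alpha_m(T)\to\vec 0$. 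The key insight you are missing is this reduction to a single configuration via the permutation-of-(position,state)-pairs structure, rather than trying to control the state-sum pattern by pattern.
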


	We do not know whether $\alpha(T) \in \ZZ^d$ implies $T \in \EL(\ZZd,n,1)$, nor whether $\EL(\ZZd,n,k)$ is finitely generated -- the sign homomorphism we use in the proof of finitely-generatedness of the group of finite-state automata does not extend to it.

	By the results of this section, the group $\RTM(\ZZd,n,k)$ is neither amenable nor residually finite. By adapting the proof of Theorem~\ref{thm:FGFinitelyGenerated}, one can show that it is locally embeddable in finite groups. See \cite{VeGo97,Zi02,We00} for the definitions.
	
	\begin{restatable}{theorem}{LEF}
	The group $\RTM(\ZZd,n,k)$ is LEF, and thus sofic, for all $n,k,d$.
	\end{restatable}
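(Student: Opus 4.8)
The plan is to adapt the periodic-point argument behind Theorem~\ref{thm:FGFinitelyGenerated}. Recall that $\RTM(\ZZd,n,k)$ is LEF if for every finite subset $S$ there is a finite group $H$ and an injective map $\phi : S \to H$ with $\phi(ab) = \phi(a)\phi(b)$ whenever $a,b,ab \in S$, and that every LEF group is sofic (see \cite{VeGo97,Zi02,We00}); so it suffices to produce such a partial embedding for each finite $S$. The idea is to let the elements of $S$ act on a large finite set of \emph{$N$-periodic configurations carrying exactly one head per period}. The key conceptual point, already flagged in the footnote of the introduction, is that one cannot work inside $X_{n,k}$ itself, since a nontrivial $N\ZZd$-periodic configuration would there be forced to have no head at all; instead we enlarge the phase space to allow infinitely many, periodically placed, heads, which is harmless provided consecutive heads stay far apart.

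Concretely, I would first collect radii: by Proposition~\ref{prop_CACharacterization} each $T \in S$ and each $T^{-1}$ is a cellular automaton with finite in-, out- and move-radius, and each product $ab$ with $a,b\in S$ has radius at most the sum of the radii of $a$ and $b$; so fix $r$ bounding all of these and then $N > 10r$. Let $P_N$ be the finite set of pairs $(x,y)$ with $x \in \Sigma^{\ZZd}$ and $y \in \{0,\dots,k\}^{\ZZd}$ both $N\ZZd$-periodic and with $y$ having exactly one nonzero coordinate in each translate of $[0,N)^d$; note $|P_N| = kN^d n^{N^d}$, and the heads of a configuration in $P_N$ are pairwise at distance exactly $N$. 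For $T \in S$ I would define $T|_{P_N}$ by applying the local rule of $T$ simultaneously at every head; since the heads are $N > 2r$ apart, the windows the rule reads, writes and moves through are pairwise disjoint, by periodicity every head sees the same pattern and moves by the same vector, so the output is again in $P_N$ and it agrees with the genuine action of $T$ on $X_{n,k}$ when only one head is present. Applying the same disjointness argument to a finite-radius local rule realizing $TT^{-1}=\ID$ shows that the extension of $T^{-1}$ inverts that of $T$ on $P_N$, so $T|_{P_N} \in \Sym(P_N)$.

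Next I would verify the two defining properties of a local embedding. For multiplicativity: if $a,b,ab \in S$, then $b|_{P_N}(x,y)$ again lies in $P_N$ with heads displaced by at most $r$, so composing the local rules of $a$ and $b$ head by head yields $(ab)|_{P_N} = a|_{P_N}\circ b|_{P_N}$, i.e. $T\mapsto T|_{P_N}$ is a partial homomorphism $S\to\Sym(P_N)$. For injectivity: if $a\neq b$ in $S$, pick a common read-window $F$ and write-window $F'$ and a pair $(p,q)\in\Sigma^F\times Q$ on which the local rules of $a$ and $b$ disagree; then, once $N$ is large enough that $F\cup F'$ fits in $[0,N)^d$, the $N\ZZd$-periodic configuration whose fundamental domain contains the pattern $p$ (padded with $0$'s) and a single head in state $q$ at the appropriate place lies in $P_N$ and is moved differently by $a$ and $b$, so $a|_{P_N}\neq b|_{P_N}$. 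Taking $N$ above all these finitely many thresholds, $\Sym(P_N)$ with $T\mapsto T|_{P_N}$ is the desired local embedding, so $\RTM(\ZZd,n,k)$ is LEF and therefore sofic.

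The part needing the most care is purely the bookkeeping around $N$: one must choose it large relative to \emph{every} radius that can occur — those of the elements of $S$, of their inverses, and of their pairwise products — so that the multi-head extension is simultaneously well defined, a bijection, and compatible with composition, and one must make sure applying a local rule "head by head" really computes the correct global map on $P_N$. I do not expect a genuine conceptual obstacle beyond the one already resolved in Theorem~\ref{thm:FGFinitelyGenerated}, namely the realization that the phase space must be enlarged to permit one head per period.
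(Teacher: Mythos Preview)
Your proposal is correct and follows essentially the same approach as the paper: act on the finite set of $N$-periodic configurations carrying one head per period, with $N$ large enough that the heads' windows do not interact under elements of $S$ and their pairwise products, and read off a local embedding into the symmetric group on that set. The paper's proof is terser (it takes $Y_{8r}$ with $r$ a common radius bound and checks only that $\phi(T\circ T')=\phi(T)\circ\phi(T')$ for $T,T'\in M$), while you are more explicit about why the multi-head extension is a bijection and why the map is injective on $S$; these are exactly the bookkeeping points the paper leaves implicit.
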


	\section{Computability aspects} 
	\label{section.computability}
	
	
	\subsection{Basic decidability results}

	First, we observe that basic management of local rules is decidable. Note that these results hold, and are easy to prove, even in higher dimensions.
	
	\begin{restatable}{lemma}{Decisions}
	Given two local rules $f, g : \Sigma^F \times Q \to \Sigma^F \times Q \times \ZZd$,
	\begin{itemize}
	\item it is decidable whether $T_f = T_g$,
	\item we can effectively compute a local rule for $T_f \circ T_g$,
	\item it is decidable whether $T_f$ is reversible, and
	\item we can effectively compute a local rule for $T_f^{-1}$ when $T_f$ is reversible.
	\end{itemize}
	\end{restatable}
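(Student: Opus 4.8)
The plan is to observe that a moving-head Turing machine $T_f$ with local rule $f:\Sigma^F\times Q\to\Sigma^F\times Q\times\ZZd$ is completely determined by the finite data of $f$, and that all four tasks reduce to finite case analyses over the finite set $\Sigma^F\times Q$. First I would normalize: given $f$ and $g$ on the same support $F$ (which we may assume by enlarging $F$ to a common box $[-r,r]^d$, redefining the rules accordingly — this is itself an effective operation), the equality $T_f=T_g$ holds if and only if $f$ and $g$ agree as functions, because evaluating $T_f$ on a configuration with a single head at $\vec 0$ and arbitrary pattern $p\in\Sigma^F$ recovers $f(p,q)$ exactly. So deciding $T_f=T_g$ is just comparing two finite tables. (One must be slightly careful that when $F'\subsetneq F$ the "don't care" cells outside $F'$ make two syntactically different rules define the same machine; normalizing to $F=F'$ handles this, and the reduction to a common support is where one checks that $T_f$ is genuinely unchanged.)

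For composition, I would compute a local rule for $T_f\circ T_g$ directly: if $T_g$ reads support $F_g$, writes support $F_g'$, and moves by at most $m_g$, and $T_f$ reads $F_f$, then after applying $T_g$ the head has moved by some $\vec d_g$ and the tape has been altered only within $F_g'$; so to know what $T_f$ does next it suffices to read $g$'s input neighborhood translated appropriately together with the region $T_g$ may have rewritten, i.e. the support $F_g\cup\big(\bigcup_{\vec d}(F_f+\vec d)\big)$ over the finitely many possible moves $\vec d$ of $T_g$, union $F_g'$. This is a finite set computable from $f$ and $g$, and on it the composed rule is defined by the obvious two-step evaluation. The output support is likewise $F_g'\cup\bigcup_{\vec d}(F_f'+\vec d)$ and the move is the sum of the two moves. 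All of this is a finite computation; by Proposition~\ref{prop_CACharacterization} the result is again in $\TM(\ZZd,n,k)$, so we have indeed produced a valid local rule.

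For reversibility and inversion, I would invoke Theorem~\ref{theorem_injective_surjective_reversible}: $T_f$ is reversible iff it is injective, and by the corollary after Proposition~\ref{prop_CACharacterization} this is equivalent to $T_f$ being a bijection of $X_{n,k}$. Injectivity of a cellular-automaton-like map on a subshift of finite type is decidable — concretely, after normalizing to $F=F'=[-r,r]^d$, a head at $\vec 0$ in state $q$ with pattern $p$ maps to a head at $\vec d$ with altered pattern $p'$; injectivity fails iff two distinct triples $(p,q)$, $(\bar p,\bar q)$ (with the heads placed so that their footprints are compatible) produce the same image, and since the head displacement is bounded by the move-radius, only finitely many relative placements of the two heads need to be checked, so this is a finite search. (Alternatively one can cite decidability of injectivity for CA in the relevant setting; the bounded-head structure makes the direct argument cleaner and works in every dimension.) Once reversibility is confirmed, a local rule for $T_f^{-1}$ is obtained by brute force: search over all candidate local rules $g$ on support $[-R,R]^d$ for increasing $R$ — among which $T_f^{-1}$ must eventually appear, since $T_f^{-1}\in\TM(\ZZd,n,k)$ has some finite radius — and for each test whether $T_g\circ T_f=\ID$ using the already-established decidability of composition and of equality; output the first $g$ that works.

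The main obstacle is the reversibility test: one needs to argue carefully that injectivity of $T_f$ on all of $X_{n,k}$ is captured by a finite combinatorial condition. The key point is that $X_{n,k}$ has at most one head, so a failure of injectivity must already be witnessed by configurations containing at most one head each, and the regions where the outputs can differ are confined to a bounded neighborhood of that head (bounded by the out- and move-radii); hence it suffices to examine finitely many finite patterns. Everything else — equality, composition, and the search for the inverse — is routine once this is in place, and none of it is sensitive to the dimension $d$.
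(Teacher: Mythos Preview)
Your proposal is correct and follows essentially the same approach as the paper. The only differences are presentational: for equality you normalize both rules to a common enlarged support and compare tables, while the paper instead minimizes each rule by stripping irrelevant coordinates and compares the minimal forms; for reversibility you give the finite injectivity check directly as a decision procedure, while the paper packages the same argument as one half of a pair of semialgorithms (the other half enumerating candidate inverses). Your version is arguably cleaner on the reversibility point, since the finite injectivity test already decides the question outright.
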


	A group is called \emph{recursively presented} if one can algorithmically enumerate its elements, and all identities that hold between them. If one can furthermore decide whether a given identity holds in the group (equivalently, whether a given element is equal to the identity element), we say the group has a \emph{decidable word problem}. The above lemma is the algorithmic content of the following proposition:
	
	\begin{proposition}
	The groups $\TM(\ZZd,n,k)$ and $\RTM(\ZZd,n,k)$ are recursively presented and have decidable word problems in the standard presentations.
	\end{proposition}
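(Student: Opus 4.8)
The plan is to read everything off the preceding lemma on algorithmic manipulation of local rules. Fix $d,n,k$. The \emph{standard presentation} of the monoid $\TM(\ZZd,n,k)$ has as generating set $S$ the set of all local rules $f : \Sigma^F \times Q \to \Sigma^{F'} \times Q \times \ZZd$ with $F,F'$ finite subsets of $\ZZd$; this set is recursive, since we can enumerate all pairs of finite subsets of $\ZZd$ together with all functions between the corresponding finite sets, and the generator $f$ names the element $T_f$. The relations are all pairs $(w,w')$ of words over $S$ with $T_w = T_{w'}$, where for a word $w = f_1 f_2 \cdots f_m$ we set $T_w := T_{f_1} \circ T_{f_2} \circ \cdots \circ T_{f_m}$ and $T_\varepsilon := \ID$. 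Since every Turing machine is $T_f$ for some $f \in S$ and composition of Turing machines is again a Turing machine, this presentation does define $\TM(\ZZd,n,k)$.

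First I would note that a single local rule for $T_w$ can be computed from the word $w$: by the lemma we can effectively compose two local rules, so we iterate this $m-1$ times. Hence, again by the lemma, given two words $w, w'$ we can decide whether $T_w = T_{w'}$: compute local rules $f_w, f_{w'}$ for $T_w, T_{w'}$ and test the decidable condition $T_{f_w} = T_{f_{w'}}$. This simultaneously shows that the relation set is recursive, hence recursively enumerable (so the presentation is recursive), and that the word problem — given $w$, decide $T_w = \ID$, i.e. $T_w = T_{w'}$ for the empty word $w'$ — is decidable.

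For $\RTM(\ZZd,n,k)$ the argument is the same, with two small adjustments. The generating set is the set of \emph{reversible} local rules, which is still recursive since reversibility of $T_f$ is decidable by the lemma. To obtain a group presentation we must handle formal inverses of generators: given a word $w$ over $S \cup S^{-1}$, each occurrence of a formal inverse $f^{-1}$ can be replaced by a genuine generator naming $T_f^{-1}$, which we can compute by the lemma. After this substitution $w$ becomes a word $\bar w$ over $S$ with $T_w = T_{\bar w}$, and we proceed exactly as above. Thus $\RTM(\ZZd,n,k)$ is recursively presented and has a decidable word problem in its standard presentation.

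The content is entirely in the preceding lemma; the only point requiring care is making the notion of \emph{standard presentation} precise — specifying $S$ and the relation set, and checking that the presented monoid (resp. group) is the intended one — and, in the group case, that formal inverses can be eliminated effectively. Once that bookkeeping is in place, decidability of equality and computability of composition of local rules give the statement immediately, and I expect no genuine obstacle beyond this.
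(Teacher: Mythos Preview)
Your proposal is correct and is exactly the approach the paper takes: the paper simply states that the preceding lemma ``is the algorithmic content'' of the proposition, and your argument spells out precisely that content (compute a local rule for a word by iterated composition, compare via the decidable equality test, and in the group case eliminate formal inverses using the computable inverse). Nothing further is needed.
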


	\subsection{The torsion problem}
	
	The \emph{torsion problem} of a recursively presented group $G$ is the set of presentations of elements $g \in G$ such that $g^n = 1_G$ for some $n \geq 1$. Torsion elements are recursively enumerable when the group $G$ is recursively presented, but the torsion problem need not be decidable even when $G$ has decidable word problem.
	

	In the case of $\RTM(\ZZd,n,k)$ the torsion problem is undecidable for $n \geq 2$. This result was shown by Kari and Ollinger in~\cite{KaOl08} using a reduction from the mortality problem which they also prove to be undecidable. 
	
	
	The question becomes quite interesting if we consider the subgroup $\FG(\ZZd,n,k)$ for $n \geq 2$, as then the decidability of the torsion problem is dimension-sensitive. 

\begin{restatable}{theorem}{OneDTorsion}
The torsion problem of $\FG(\ZZ,n,k)$ is decidable.
\end{restatable}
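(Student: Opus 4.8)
The plan is to reduce the question of whether a given $T \in \FG(\ZZ,n,k)$ has finite order to a finite collection of easily checkable conditions. The first observation is that the average movement $\alpha : \RTM(\ZZ,n,k) \to \QQ$ is a homomorphism, so if $\alpha(T) \neq 0$ then $T^m$ has average movement $m\,\alpha(T) \neq 0$ for all $m \geq 1$, hence $T$ has infinite order. Since $\alpha(T)$ is computable from a local rule, we may assume from now on that $\alpha(T) = 0$. Because $T$ does not modify the tape, its action on a configuration $x \in \Sigma^{\ZZ}$ together with a single head is: the head walks around on the fixed tape $x$, changing state according to what it reads. So for each fixed $x$, the map $(x,q) \mapsto T(x,q)$ is a permutation of $\{x\} \times Q \times \ZZ$ (the head positions and states over the frozen tape $x$), and $T$ has finite order if and only if there is a common $m$ such that this permutation has order dividing $m$ for every $x$.

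The key step is to analyze the head's trajectory on a fixed bi-infinite tape $x$. Starting from state $q$ at position $0$, the head follows a deterministic walk; since there are only $k$ states and the in-radius is some fixed $r$, after the head has visited a position it ``remembers'' only a bounded window, so the walk is eventually controlled by what the head sees. I would argue that on \emph{any} tape $x$, with $\alpha(T)=0$, the head started at the origin cannot escape to $+\infty$ or $-\infty$: if it did so on some $x$, a compactness/pumping argument on the sequence of (state, local window) pairs along the trajectory would produce a periodic tape on which the head drifts with nonzero speed, contradicting $\alpha(T)=0$ via the action on periodic points (the same periodic-point technique used for Theorem~\ref{thm:FGFinitelyGenerated}). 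Hence on every tape the head's orbit is confined to a bounded interval around its start — and moreover the bound can be taken \emph{uniform} in $x$, again by a König's-lemma/compactness argument: if no uniform bound existed, a limiting configuration would give an unbounded orbit. Once we have a uniform bound $N$ on the excursion, the head's behavior on $x$ depends only on the finite window $x_{[-N-r,\,N+r]}$, so $T$ restricted to each head-position class is described by finitely many permutations of the finite set $[-N,N]\times Q$, one for each pattern on that window. Then $T$ has finite order iff all of these finitely many permutations do, i.e. iff $\lcm$ of their orders is finite, which it trivially is; so actually once the uniform bound $N$ is established and computed, $T$ automatically has finite order, and conversely $\alpha(T)=0$ is also necessary — wait, that would make the problem trivial, so the real content must be that $\alpha(T)=0$ does \emph{not} force bounded excursions, and the algorithm must instead \emph{search} for a bound while simultaneously searching for a witness of infinite order.

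The honest structure of the algorithm, then, is a two-sided semi-decision procedure made total: run in parallel (i) the enumeration of $m$ with a check whether $T^m = \ID$ (decidable by the earlier lemma on deciding $T_f = T_g$), which halts iff $T$ has finite order, and (ii) a search for a certificate of infinite order. For (ii), the certificate is either $\alpha(T) \neq 0$, or a periodic configuration $x = w^{\ZZ}$ on which the orbit of a head is infinite (equivalently, the head drifts, or the induced permutation on $\{1,\dots,k\} \times \ZZ/|w|\ZZ$ or on the relevant finite quotient has unbounded order as $|w| \to \infty$) — and crucially one shows that \emph{if $T$ has infinite order then such a finite certificate exists}, by taking an $x$ on which some orbit is infinite, extracting from the orbit a repeated (state, window)-pair, and folding it into a periodic tape. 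The main obstacle is precisely this completeness direction: proving that infinite order is always \emph{witnessed on a periodic configuration} (and hence detectably), which requires the careful pumping argument on head trajectories together with the control given by $\alpha(T) = 0$; handling the case where the head makes infinitely many back-and-forth visits to a bounded region with ever-growing but not drifting excursions is the delicate point, and is where the one-dimensionality is essential (in $\ZZ^2$ this fails, consistent with the undecidability claimed for higher dimensions).
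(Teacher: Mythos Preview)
Your proposal eventually converges to the paper's structure: run two semialgorithms in parallel, one checking $T^m = \ID$ for increasing $m$, the other searching for a periodic tape on which the head escapes to infinity. That is exactly what the paper does. The detour through $\alpha(T)$ is unnecessary (the paper never mentions it), and your own realization mid-proof that ``$\alpha(T)=0$ implies bounded excursions'' is false shows why: the machine that steps right on $0$ and left on $1$ has $\alpha(T)=0$ and infinite order.

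The genuine gap is in your completeness direction for the non-torsion semialgorithm. You write ``extract a repeated (state, window)-pair from the orbit and fold into a periodic tape'', but a naive repeat along the trajectory does not suffice: between two times with the same (state, local window), the head may have wandered arbitrarily far right and back, so the segment of tape you would periodize does not determine the walk between those times. The paper's key idea, which you are missing, is to pump on \emph{last-visit} positions: positions $h$ such that after some time $\ell_h$ the head is at $h$ and never again at any position $\leq h$. Such positions form a syndetic set along any walk to $+\infty$, and after time $\ell_h$ the head's future depends only on the tape to the right of $h$. Pigeonholing on the state (and local pattern) at these last visits gives $h<h'$ with matching data, and now the block $x_{[h-r,\,h'-r-1]}$ can legitimately be periodized: on $(x_{[h-r,\,h'-r-1]})^{\ZZ}$ the head, started in the matched state, replays the walk from $\ell_h$ to $\ell_{h'}$ and then, by periodicity, repeats it shifted by $h'-h$, escaping to infinity. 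This is precisely the mechanism that handles your ``ever-growing back-and-forth'' worry, and it is where one-dimensionality enters.
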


\begin{restatable}{theorem}{TwoDTorsion}
For all $n \geq 2, k \geq 1, d \geq 2$, there is a finitely generated subgroup of $\FG(\ZZd,n,k)$ whose torsion problem is undecidable.
\end{restatable}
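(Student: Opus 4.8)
The plan is to reduce the mortality problem for reversible Turing machines, proved undecidable by Kari and Ollinger in \cite{KaOl08}, to the torsion problem of a fixed finitely generated subgroup of $\FG(\ZZ^2,n,k)$. By routine recodings $\FG(\ZZ^2,n_0,k_0)$ embeds into $\FG(\ZZ^2,2,1)$ — encode $n_0$ symbols into fixed blocks of binary cells, and absorb the $k_0$ head states into the head's position along one axis after rescaling that axis — and $\FG(\ZZ^2,2,1)$ embeds into $\FG(\ZZ^d,n,k)$ for every $n\geq 2$, $k\geq 1$, $d\geq 2$ by using two of the $n$ symbols and one of the $k$ states (acting as the identity otherwise) and ignoring the extra coordinates. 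These are injective homomorphisms, so a finitely generated subgroup with undecidable torsion problem transports along them. Hence it suffices to fix one convenient pair $(n_0,k_0)$, a finite set $G\subseteq\FG(\ZZ^2,n_0,k_0)$, and a computable map $M\mapsto w_M$ from reversible Turing machines to words over $G\cup G^{-1}$ such that the element $T_M$ represented by $w_M$ is a torsion element if and only if $M$ is mortal.

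The element $T_M$ is a two-dimensional \emph{suspension} of $M$. Read a configuration $x\in\Sigma^{\ZZ^2}$ as a vertical stack of rows, row $j$ being a candidate $j$-th tape content of $M$, with the head position and internal state of $M$ coded into the cells by fixed-size blocks (blocks are needed because $M$ has arbitrarily many states). The head of $T_M$ carries one bit of \emph{up/down} mode plus a few auxiliary states; sitting over the coded $M$-head of row $j$ in up mode it inspects rows $j$ and $j+1$ in a window around it, and if they form a legal, defined step of $M$ it jumps to the coded $M$-head of row $j+1$ staying in up mode, while if the step is illegal, or $M$ is undefined at row $j$, it switches to down mode without moving; down mode does the symmetric thing with rows $j-1,j$, following $M^{-1}$, and switches back to up mode when the step below is illegal or $M^{-1}$ is undefined. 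On every configuration not locally of this shape, $T_M$ is the identity. A careful but routine case analysis — of exactly the flavour used to build reversible Turing machines in \cite{KaOl08} — shows that this prescription is a bijection of the set of ``valid-diagram'' configurations onto itself, all remaining configurations being fixed points, so $T_M$ is a genuine element of $\FG(\ZZ^2,n_0,k_0)$; reversibility of $M$ is what makes the down-transitions well defined.

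Now trace an orbit. On a valid-diagram configuration the head climbs up $M$'s trajectory until $M$ halts or the tape stops encoding a legal step, then climbs down the backward trajectory until $M^{-1}$ halts — which, if $M$'s computations are \emph{anchored} at a locally recognisable initial configuration having no legal $M$-predecessor, happens exactly at that initial configuration — or the tape stops cooperating, and then repeats; every other orbit is a singleton. Hence $T_M$ has an infinite orbit if and only if some configuration of $M$ admits an infinite forward or backward computation. Taking $M$ anchored, so that backward orbits are finite by design, and forward-immortal precisely when the reversible machine $N$ to which the reduction is applied is immortal, we get: $M$ (equivalently $N$) is immortal $\iff$ $T_M$ has an infinite orbit. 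If instead $M$ is mortal, then by compactness of $\Sigma^{\ZZ^2}\times Q$ the heights of valid-diagram fragments, upward and backward, are uniformly bounded by some $B$, so every $T_M$-orbit has length at most $2B$ and $T_M^{B!}=\ID$; thus $T_M$ is torsion. Since mortality is undecidable for the reversible Turing machines produced by the reduction of \cite{KaOl08} (which may be taken anchored), and torsion elements of a recursively presented group are recursively enumerable, the torsion problem of $\langle G\rangle$ is undecidable.

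It remains to place all the $T_M$ inside a single finitely generated group. Here we use that composition in $\FG(\ZZ^2,n_0,k_0)$ does not increase the number of states — only the radius grows additively — so a word of length $L$ in fixed generators defines an element whose single step may depend on a window of radius $\Theta(L)$. We take $G$ to consist of a fixed handful of elementary reversible finite-state automata: the unit head translations, gadgets that permute the head's mode and auxiliary states conditionally on one inspected cell, and a couple of Toffoli-like conditional permutations. The transition table of $M$ — an arbitrary reversible function on the $O(\log|M|)$ coded bits around the head — is then spelled out in $w_M$ as a reversible circuit / branching program over these gadgets, precisely as in the proof that $\OB(\ZZ^d,n,k)$ is finitely generated and as licensed by the universality of reversible gate sets \cite{AaGrSc15}; this makes $M\mapsto w_M$ computable. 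The main obstacle is doing all of this at once: keeping $T_M$ globally reversible — an honest element of $\FG$ acting on all of $X_{n_0,k_0}$, including the overwhelming majority of configurations that encode nothing — while it still faithfully simulates $M$ on honest space-time diagrams, and simultaneously keeping it expressible by the fixed toolkit $G$. The collision-free completion of the partial up/down rule to a permutation, and the anchoring of $M$ that prevents the down phase of an orbit from running away backwards, are the delicate points; both mirror constructions already available in the reversible Turing machine literature, in particular \cite{KaOl08}.
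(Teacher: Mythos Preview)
Your route is genuinely different from the paper's. The paper reduces from the \emph{snake tiling problem} of \cite{Ka03}: given a set of Wang tiles equipped with a direction function $d:T\to D$, the finite-state automaton simply follows the arrow on the tile under the head, flipping a single direction bit whenever the local tiling is invalid. Reversibility is then a trivial involution, there is no backward-orbit issue to worry about, and the automaton is non-torsion iff an infinite snake exists. The finite-generation step is done by an explicit commutator-style induction,
\[ g_p = (T_{-\vec v}\circ g_{+,s}\circ T_{\vec v}\circ h_{p'})^2,\qquad h_p = (T_{-\vec v}\circ h_{+,s}\circ T_{\vec v}\circ g_{p'})^2, \]
which manufactures a detector for an arbitrary pattern $p$ out of single-cell detectors using exactly two state bits throughout; composing these over all locally illegal $3n\times 3n$ windows and then walking $n$ steps gives the desired element.

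Your space-time-diagram construction can likely be pushed through, but the two places you defer are exactly where the work is. First, the anchoring: you need that whenever $M$ is forward-mortal, the backward $M$-orbits are also uniformly bounded; this is not automatic for a reversible $M$, and while it is plausible that the Kari--Ollinger machines can be taken anchored in this sense, it has to be argued. Second, and more to the point, your finite-generation argument appeals to the proof that $\OB$ is finitely generated and to \cite{AaGrSc15}, but that proof relies essentially on \emph{writing to the tape} (applying permutations of $\Sigma^F\times Q$), which is unavailable in $\FG$. What you actually need is to detect an arbitrary fixed pattern and conditionally flip a state bit using a \emph{fixed} number of auxiliary state bits; this is a bounded-width reversible branching program, and its existence is precisely the content of the paper's $g_p,h_p$ induction --- it does not follow from universal reversible gate sets acting on unboundedly many wires. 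Your ``branching program'' remark is the right intuition, but the reversible, bounded-state realisation is the nontrivial step you are skipping. The paper's snake-tiling route buys you both simplifications at once: no anchoring, and a two-bit state that is manifestly enough.
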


	 \section{Acknowledgements}

	 The third author was supported by FONDECYT grant 3150552.

	\bibliographystyle{plain}
	
	\bibliography{bib}

\begin{thebibliography}{10}

\bibitem{GaOlTo15}
N.~Ollinger A.~Gajardo and R.~Torres-Avil{\'e}s.
\newblock The transitivity problem of {Turing} machines.
\newblock 2015.

\bibitem{AaGrSc15}
S.~{Aaronson}, D.~{Grier}, and L.~{Schaeffer}.
\newblock {The classification of reversible bit operations}.
\newblock {\em ArXiv e-prints}, April 2015.

\bibitem{AuBaSa14}
N.~{Aubrun}, S.~{Barbieri}, and M.~{Sablik}.
\newblock {A notion of effectiveness for subshifts on finitely generated
  groups}.
\newblock {\em ArXiv e-prints}, December 2014.

\bibitem{AuSa13}
N.~Aubrun and M.~Sablik.
\newblock Simulation of effective subshifts by two-dimensional subshifts of
  finite type.
\newblock {\em Acta applicandae mathematicae}, 126(1):35--63, 2013.

\bibitem{BeBl14}
J.~{Belk} and C.~{Bleak}.
\newblock {Some undecidability results for asynchronous transducers and the
  Brin-Thompson group 2V}.
\newblock {\em ArXiv e-prints}, May 2014.

\bibitem{BeMa14}
James Belk and Francesco Matucci.
\newblock Conjugacy and dynamics in {Thompson's} groups.
\newblock {\em Geometriae Dedicata}, 169(1):239--261, 2014.

\bibitem{Boykett16}
T.~Boykett, J.~Kari, and V.~Salo.
\newblock Strongly universal reversible gate sets.
\newblock {\em submitted}, 2016.

\bibitem{CeCo10}
T.~Ceccherini-Silberstein and M.~Coornaert.
\newblock {\em Cellular Automata and Groups}.
\newblock Springer Monographs in Mathematics. Springer-Verlag, 2010.

\bibitem{CzKa07}
E.~Czeizler and J.~Kari.
\newblock A tight linear bound on the synchronization delay of bijective
  automata.
\newblock {\em Theoretical Computer Science}, 380(1–2):23 -- 36, 2007.
\newblock Automata, Languages and Programming.

\bibitem{DuRoSh10}
B.~Durand, A.~Romashchenko, and A.~Shen.
\newblock Effective closed subshifts in 1d can be implemented in 2d.
\newblock In {\em Fields of logic and computation}, pages 208--226. Springer,
  2010.

\bibitem{ElMo12}
G.~{Elek} and N.~{Monod}.
\newblock {On the topological full group of a minimal {Cantor} Z\^{}2-system}.
\newblock {\em ArXiv e-prints}, December 2012.

\bibitem{GaGu10}
A.~Gajardo and P.~Guillon.
\newblock Zigzags in {Turing} machines.
\newblock In {\em Computer Science--Theory and Applications}, pages 109--119.
  Springer, 2010.

\bibitem{GaMa07}
A.~Gajardo and J.~Mazoyer.
\newblock One head machines from a symbolic approach.
\newblock {\em Theoretical Computer Science}, 370(1–3):34 -- 47, 2007.

\bibitem{GiPuSk99}
T.~Giordano, I.~Putnam, and C.~Skau.
\newblock Full groups of {Cantor} minimal systems.
\newblock {\em Israel Journal of Mathematics}, 111(1):285--320, 1999.

\bibitem{GrMe11}
R.~{Grigorchuk} and K.~{Medynets}.
\newblock {On algebraic properties of topological full groups}.
\newblock {\em ArXiv e-prints}, May 2011.

\bibitem{DeKuBl06}
P.~K{\r{u}}rka J~Delvenne and V.~Blondel.
\newblock Decidability and universality in symbolic dynamical systems.
\newblock {\em Fund. Inform.}, 74(4):463--490, 2006.

\bibitem{Je13}
E.~Jeandel.
\newblock Computability of the entropy of one-tape {Turing} machines.
\newblock {\em arXiv preprint arXiv:1302.1170}, 2013.

\bibitem{JuMo12}
K.~Juschenko and N.~Monod.
\newblock Cantor systems, piecewise translations and simple amenable groups.
\newblock {\em Annals of Mathematics}, 2012.

\bibitem{Ka96}
J.~Kari.
\newblock Representation of reversible cellular automata with block
  permutations.
\newblock {\em Theory of Computing Systems}, 29:47--61, 1996.
\newblock 10.1007/BF01201813.

\bibitem{Ka03}
J.~Kari.
\newblock {\em Developments in Language Theory: 6th International Conference,
  DLT 2002 Kyoto, Japan, September 18--21, 2002 Revised Papers}, chapter
  Infinite Snake Tiling Problems, pages 67--77.
\newblock Springer Berlin Heidelberg, Berlin, Heidelberg, 2003.

\bibitem{KaOl08}
J.~Kari and N.~Ollinger.
\newblock Periodicity and immortality in reversible computing.
\newblock In {\em Proceedings of the 33rd international symposium on
  Mathematical Foundations of Computer Science}, MFCS '08, pages 419--430,
  Berlin, Heidelberg, 2008. Springer-Verlag.

\bibitem{Ku97}
P.~K\r{u}rka.
\newblock On topological dynamics of {Turing} machines.
\newblock {\em Theor. Comput. Sci.}, 174(1-2):203--216, March 1997.

\bibitem{Ku10}
P.~K\r{u}rka.
\newblock Erratum to: Entropy of {Turing} machines with moving head.
\newblock {\em Theor. Comput. Sci.}, 411(31-33):2999--3000, June 2010.

\bibitem{LiMa95}
D.~Lind and B.~Marcus.
\newblock {\em An Introduction to Symbolic Dynamics and Coding}.
\newblock Cambridge University Press, 1995.

\bibitem{BoLiRu88}
D.~Lind M.~Boyle and D.~Rudolph.
\newblock The automorphism group of a shift of finite type.
\newblock {\em Transactions of the American Mathematical Society}, 306(1):pp.
  71--114, 1988.

\bibitem{Ma12}
H.~{Matui}.
\newblock {Topological full groups of one-sided shifts of finite type}.
\newblock {\em ArXiv e-prints}, October 2012.

\bibitem{SaScworkinprogress}
V.~Salo and M.~Schraudner.
\newblock in preparation.

\bibitem{SaTo15c}
V.~Salo and I.~T\"orm\"a.
\newblock Group-walking automata.
\newblock In Jarkko Kari, editor, {\em Cellular Automata and Discrete Complex
  Systems}, volume 9099 of {\em Lecture Notes in Computer Science}, pages
  224--237. Springer Berlin Heidelberg, 2015.

\bibitem{SaTo15}
V.~Salo and I.~T\"orm\"a.
\newblock Plane-walking automata.
\newblock In Teijiro Isokawa, Katsunobu Imai, Nobuyuki Matsui, Ferdinand Peper,
  and Hiroshi Umeo, editors, {\em Cellular Automata and Discrete Complex
  Systems}, volume 8996 of {\em Lecture Notes in Computer Science}, pages
  135--148. Springer International Publishing, 2015.

\bibitem{BlCaNi02}
J.~Cassaigne V.~Blondel and C.~Nichitiu.
\newblock On the presence of periodic configurations in {Turing} machines and
  in counter machines.
\newblock {\em Theoretical Computer Science}, 289(1):573--590, 2002.

\bibitem{VeGo97}
A.~Vershik and E~Gordon.
\newblock Groups that are locally embeddable in the class of finite groups.
\newblock {\em Algebra i Analiz}, 9(1):71--97, 1997.

\bibitem{We00}
B.~Weiss.
\newblock Sofic groups and dynamical systems.
\newblock {\em Sankhy{\=a}: The Indian Journal of Statistics, Series A}, pages
  350--359, 2000.

\bibitem{Xu15}
S.~Xu.
\newblock Reversible logic synthesis with minimal usage of ancilla bits.
\newblock Master's thesis, MIT, June 2015.

\bibitem{Zi02}
M~Ziman et~al.
\newblock On finite approximations of groups and algebras.
\newblock {\em Illinois Journal of Mathematics}, 46(3):837--839, 2002.

\end{thebibliography}

	\section*{Appendix: Proofs}

	\propCACharacterization*
	
	\begin{proof}
		Let $T \in \TM(\ZZd,n,k)$. $T$ is clearly shift invariant and continuous, therefore $T \in \End(X_{n,k})$. Also, $T$ acts trivially on $X_{n,0}$ so $T|_Y = \ID$ and if the initial configuration has a head, it can only move by a finite amount and not disappear, thus $T^{-1}(Y) \subset Y$. Moreover, if $T \in \RTM(\ZZd,n,k)$, then $T$ has a Turing machine inverse, thus a cellular automaton inverse, and it follows that $T \in \Aut(X_{n,k})$.
		
		Conversely, let $\phi \in \End(X_{n,k})$, so that $\phi(x,y)_{\vec v} = \Phi( \sigma_{-\vec v}((x,y))|_F)$ for some local rule $\Phi: (\Sigma \times \{0,\dots,k\})^{F} \to \Sigma \times \{0,\dots,k\}$ and $F$ a finite subset of $\ZZd$. As $\phi|_Y = \ID$, when $n \geq 2$ we can deduce that $\vec 0 \in F$ and that $\Phi(u,v) = (u_{\vec 0},v_{\vec 0})$ if $v = 0^F$. Therefore if $(x,y) \in X_{n,k}$, $y_{\vec v} \neq 0$ and we define $W_{\vec v} = \{\vec u \in \ZZd \mid \vec v \in \vec u+F \}$ we get that $\phi(x,y)|_{\ZZd \setminus W_{\vec v}} = (x,y)|_{\ZZd \setminus W_{\vec v}}$. We can thus extend $\Phi$ to $\widetilde{\Phi}: (\Sigma \times \{0,\dots,k\})^{W_{\vec 0}+F} \to  (\Sigma \times \{0,\dots,k\})^{W_{\vec 0}}$ defined by pointwise application of $\Phi$. We can then define $f_{\phi} : \Sigma^{W_0+F} \times Q \to \Sigma^{W_0} \times Q \times \ZZ$ by using $\widetilde{\Phi}$ as follows:  We set $f_{\phi}(p,q) = (p',q',\vec u)$ if, after defining $r \in \{0,\dots,k\}^{W_0+F}$ such that $r_{\vec 0} = q$ and $0$ elsewhere, we have $\widetilde{\Phi}(p,r) = (p',r')$ and $r' \in \{0,\dots,k\}^{W_0}$ contains the symbol $q'$ in position $\vec u$ (there is always a unique such position $\vec u$ as $\phi^{-1}(Y) \subset Y$).
		
		 It can be verified that the Turing machine $T_{f_{\phi}}$ is precisely $\phi$, therefore $\phi \in \TM(\ZZd,n,k)$. If $\phi \in \Aut(X_{n,k})$ then the $\phi^{-1}(Y) \subset Y$ property is implied by $\phi|_Y = \ID$, so such $\phi$ is also Turing machine. It is easy to see that if $\phi$ satisfies $\phi|_Y = \ID$, then also $\phi^{-1}|_Y = \ID$. It follows that the inverse map $\phi^{-1}$ is also a Turing machine. Thus, $\phi \in \RTM(\ZZd,n,k)$. \qed
	 \end{proof}

     \MovingTapeDynamicalDef*
	
	\begin{proof}
	It is easy to see that $T_f$ for any local rule $f : \Sigma^{F} \times Q \to \Sigma^{F'} \times Q \times \ZZd$ is continuous. The projection to the third component of $f$ gives the function $s$, and one can take the maximal length of a vector in $F'$ as $a$.
	
	For the converse, since $s$ is a continuous function from a compact space to a discrete one, $s(x, q)$ only depends on a finite set $F_0$ of coordinates of $x$ and obtains a maximum $m$. Since $T$ is continuous, $T(x,q)_{[-a-m,a+m]}$ depends only on a finitely set of coordinates $F_1$ of $x$. It is then easy to extract a local rule
	\[ f : \Sigma^{F_0 \cup F_1} \times Q \to \Sigma^{[-a,a]^d} \times Q \times \ZZd, \]
	for $T$. \qed
	\end{proof}
	
     \GroupIsomorphismLemma*
	
	\begin{proof}
		Consider again the epimorphism $\Psi$ and let $X \subset \{0,1\}^{\ZZd}$ be any strongly aperiodic subshift (that is, for each configuration $x \in X$, $stab_{\sigma}(x)= \{0\}$). Then if $x \in X$, $x \not\sim \sigma_{\vec v}(x)$ for any $\vec v \in \ZZd \setminus \{\vec 0\}$, otherwise the compactness of $X$ would allow us to construct a periodic point by shifting the finite set of differences to infinity. Let $\widetilde{x} \in X$ and consider $T \neq T'$ in $\TM(\ZZd,n,k)$ and a pair $(x,y)$ such that $T(x,y) \neq T'(x,y)$. As elements of $\TM(\ZZd,n,k)$ act locally around the head, we can modify $x$ outside a finite region such that it is asymptotically equivalent to $\widetilde{x}$ and call this modified version $x'$. We choose the finite region large enough to ensure $T(x',y) \neq T'(x',y)$. Obviously $x' \not\sim \sigma_{\vec v}(x')$ for non-zero $\vec v$ and thus if the non-zero symbol carried by $y$ is $q$ then $\Psi(T)(x',q) \neq \Psi(T')(x',q)$. \qed
	\end{proof}

     \InjectiveSurjectiveReversible*
	
	\begin{proof}
	Let $T$ be arbitrary, and let $F = F' = [-r,r]^d$ and $\epsilon = \frac{1}{k(2r+1)^d}$, where $r$ is the radius of $T$. Consider the cylinders $C_i = [p_i] \times \{q\}$ where $p_i \in \Sigma^F, q \in Q$. These cylinders form a clopen partition of $\Sigma^{\ZZd} \times Q$ into $k(2r+1)^d$ cylinders of measure $\epsilon$.
	
	Now, because $r$ is the radius of $T$, $T$ is a homeomorphism from $C_i$ onto $D_i = T(C_i)$, and $D_i$ is a cylinder set of the form $[p'] \times \{q'\}$ for some $p' \in \Sigma^{\vec v + F}$, $q \in Q$, which must be of the same measure as $C_i$ as the domain $\vec v + F$ of $p'$ has as many coordinates as the domain $F$ of $p$. Note that $D_i$ is not necessarily a cylinder centered at the origin, and the offset $\vec v$ is given by the shift-indicator. Now, observe that injectivity is equivalent to the cylinders $D_i = T(C_i)$ being disjoint. Namely, they must be disjoint if $T$ is injective, and if they are disjoint then $T$ is injective because $T|_{C_i} : C_i \to D_i$ is a homeomorphism. Surjectivity on the other hand is equal to $\Sigma^{\ZZd} \times Q = \bigcup_i D_i$, since $\bigcup_i D_i = \bigcup_i T(C_i) = T(\Sigma^{\ZZd} \times Q)$.
	
	Now, it is easy to show that injectivity and surjectivity are equivalent: If $T$ is injective, then the $D_i$ are disjoint, and $\mu(\bigcup_i D_i) = \sum_i \mu(D_i) = 1$, so we must have $\bigcup_i D_i = \Sigma^{\ZZd} \times Q$ because $\Sigma^{\ZZd} \times Q$ is the only clopen set of full measure. If $T$ is not injective, then for some $i \neq j$ we have $D_i \cap D_j \neq \emptyset$. Then $D = D_i \cap D_j$ is a nonempty clopen set, and thus has positive measure. It follows that $\mu(\bigcup_i D_i) \leq \sum_i \mu(D_i) - \mu(D) < 1$, so $\bigcup_i D_i \subsetneq \Sigma^{\ZZd} \times Q$. Of course, since injectivity and surjectivity are equivalent, they are both equivalent to bijectivity, and thus reversibility (by compactness).
	
	The proof shows that reversibility is equivalent to preserving the uniform Bernoulli measure in the forward sense -- if $T$ is reversible, then $\mu(T_f(A)) = \mu(A)$ for all clopen sets $A$, and thus for all Borel sets, while if $T$ is not reversible, then there is a disjoint union of cylinders $C \cup D$ such that $\mu(T(C \cup D)) < \mu(C \cup D)$.
	
	For measure-preservation in the usual (backward) sense, observe that the reverse of a reversible Turing machine is reversible and thus measure-preserving in the forward sense, so a reversible Turing machine must itself be measure-preserving in the traditional sense. If $T$ is not reversible, then $\mu(T(C \cup D)) < \mu(C \cup D)$ for some disjoint cylinders $C$ and $D$ large enough that $T|_C$ and $T|_D$ are measure-preserving homeomorphisms. Then for $E = T(C) \cap T(D)$ we have $\mu(T^{-1}(E)) \geq \mu((T^{-1}(E) \cap C) \cup (T^{-1}(E) \cap D)) = 2\mu(E)$. \qed
	\end{proof}

     \ClassicalReversibility*
	
	\begin{proof}
	We only need to show that if $T$ is reversible then it is of the stated form. Let $f_T : \Sigma \times Q \to \Sigma \times Q \times \{-1,0,1\}$ be a local rule for $T$ in the moving tape model. We claim that if $f_T(a,q) = (b,r,d)$ and $f_T(a',q') = (b',r,d')$ then $d = d'$. Namely, otherwise one can easily find two configurations with the same image. There are multiple cases to consider, but we only show $d = 0$ and $d' = 1$. In this case
	\[ T((xb'a.y, q)) = (xb'b.y, r) = T((xa'.by), q'). \]
	
	We have shown that the direction of movement is entirely determined by the state we enter. Of course, for $T$ to be injective, also $f_T$ must be injective, so the map $g : \Sigma \times Q \to \Sigma \times Q$ defined by $g(a,q) = (b,r)$ if $f_T(a,q) = (b,r,d)$ is injective, thus a bijection. Now, $T_0$ is defined as the permutation $g$, and $T_1$ as the finite-state automaton with local rule $f_{T_1}(a,q) = (a,q,d)$ if $f_T(b,q') = (b',q,d)$ for some $(b, q) \in \Sigma \times Q$. \qed
	\end{proof}

	\RTMIsNotFinitelyGenerated*
	
	
	\begin{proof}
		Consider the $(1,n,k)$-Turing machine $T_{\texttt{SURF},m}$ given by the local function $f : \Sigma^{\{0,\dots,m\}} \times Q \to \Sigma^{\{0,\dots,m\}} \times Q \times \ZZ$ given by the following: For $a \in \Sigma$ then $f(0^ma,q) = (a0^m,q,1)$. Otherwise $f(u,q)= (u,q,0)$. This machine is reversible, and satisfies that $\alpha(T_{\texttt{SURF},m}) = 1/n^{m}$. This machine can easily be extended to a $(d,n,k)$-Turing machine with average movement $(1/n^m,0,\dots,0)$. Suppose $\RTM(\ZZd,n,k)$ is generated by a finite set $S$. Then $\alpha(\RTM(\ZZd,n,k)) = \langle \alpha(S) \rangle$ which is a subgroup of $\QQ^d$ of elements which have their denominator bounded by the lowest common multiple of the denominators of $\alpha(T)$ for $T \in S$. As $n \geq 2$ there is $m \in \NN$ such that $\alpha(T_{\texttt{SURF},m}) \notin \alpha(\RTM(\ZZd,n,k))$, thus $T_{\texttt{SURF},m} \notin \RTM(\ZZd,n,k)$, which yields a contradiction. \qed
	\end{proof}

	\LPNotResiduallyFinite*
	
	\begin{proof}
		It suffices to show the result for $d = 1$. Let $\sigma \in S_{\infty}$ with support $F \subset N$ and let $T_{\sigma}$ be given by the local function $f_{\sigma} : \Sigma^F \times Q \to \Sigma^F \times Q \times \ZZ$ defined by $f_{\sigma}(p,q) = (p',q,0)$ where $p'_m = p_{\sigma(m)}$. By definition it's clear that $T_{\sigma_1} \circ T_{\sigma_2} = T_{\sigma_1 \circ\sigma_2}$. Moreover the homomorphism is injective: If $\sigma_1 \neq \sigma_2$ then there exists $m \in \NN$ such that $\sigma_1(m) \neq \sigma_2(m)$. As $n \geq 2$, we can consider the configuration $(x,y) \in X_{n,k}$ such that $x_m = y_0 = 1$ and $x_{\ZZ \setminus \{m\}}$ and $y_{\ZZ \setminus \{0\}}$ contain only zeroes. 
		Then $T_{\sigma_1}(x,y) \neq T_{\sigma_2}(x,y)$. \qed
	\end{proof}

	\OBAmenable*
	
	\begin{proof}
	Clearly, the image of an element of $\OB(\ZZd,n,k)$ in the average movement homomorphism $\alpha$ simply records the powers of the generators $T_{e_i}$ in the representation of the element. The image $\ZZ^d$ of this homomorphism is abelian, and thus amenable. The kernel is $\mbox{Ker}(\alpha) \cap \OB(\ZZd,n,k) = \LP(\ZZd,n,k)$ which is locally finite, thus amenable. Thus, $\OB(\ZZd,n,k)$ is the extension of an amenable group by an amenable group, thus amenable. \qed
	\end{proof}

	\LamplighterEmbedding*
	
	\begin{proof}
		Let $|G| = m$. 
		Let $T_z \in \OB(\ZZd,2,1)$ be the machine which always moves the head by $z$ and for $s \in S_m$ let $T_{s}$ the immersion into $\ZZd$ of the machine defined in Proposition~\ref{proposition_LP_not_residually_finite}. By Cayley's theorem $G$ is isomorphic to a subgroup of $S_m$ generated by some elements $s_1,\dots,s_\ell$. Let $e_1,\dots,e_d$ be the generators of $\ZZd$. Then $G \wr \ZZ^d$ is isomorphic to $\langle T_{s_1},\dots, T_{s_\ell}, T_{me_1},\dots,T_{me_d}\rangle$. Indeed, $\langle T_{me_1},\dots,T_{me_d}\rangle \cong m\ZZd$ and the action of the first $\ell$ machines only acts on the coset $\ZZd/m\ZZd$ and is isomorphic to $G$. \qed
	\end{proof}

	The proof of Theorem~\ref{thm:OBFG} is Appendix~B.

	\Orthogonal*
	
	\begin{proof}
	The inclusions $\SP(\ZZd,n,k) \subset \FG(\ZZd,n,k) \cap \LP(\ZZd,n,k)$ and
	\[ \langle \SP(\ZZd,n,k), \SHIFT(\ZZd,n,k) \rangle \subset \FG(\ZZd,n,k) \cap \OB(\ZZd,n,k) \]
	follow directly from the definitions. For the converse inclusions, observe that an element of $\FG(\ZZd,n,k) \cap \LP(\ZZd,n,k)$ cannot modify the tape or move the head, so it can only perform a permutation of the state as a function of the tape. In $\FG(\ZZd,n,k) \cap \OB(\ZZd,n,k)$, precisely the unconditional shifts have been added.
	\end{proof}

	\FreeProductEmbedding*
	
	\begin{proof}
	We show this result only for $\FG(\ZZ,n,k)$ where $n = m$. 
	For $a \in \Sigma$ define $T_a \in \FG(\ZZ,m,k)$ with radius $1$ 
	by the following rules:
		
	\begin{enumerate}
		\item if $x_0 = a$ and $x_1 \neq a$ move the head to the right.
		\item if $x_{-1} = a$ and $x_0 \neq a$ move the head to the left.
		\item Otherwise stay in place.
	\end{enumerate}
		
	The machine $T_a$ does not change the tape, so it belongs to $\FG(\ZZ,m,k)$. It is clearly reversible as $T_a^2 = \ID$. We claim that $\langle T_1,\dots,T_m \rangle$ is isomorphic to the free product of $m$ copies of $\ZZ/2\ZZ$. Indeed, every element in $\ZZ/2\ZZ * \dots * \ZZ/2\ZZ$ can be represented as a word in $\{a_1,\dots,a_m\}$ where no factor $a_ia_i$ appears. Given a word $w = w_1\dots w_t$ in that form we can construct a configuration $x(w)$ such that $x(w)_{j-1} = i$ if $w_j = a_i$ for $j \in \{1,\dots, t\}$, $x(w)_{t} \neq x(w)_{t-1}$ and $x(w)_j = 1$ otherwise. The machine $T_{w_t}\circ \dots\circ T_{w_1}$ acts on $x(w)$ by moving the head $t$ steps to the right. Therefore it is not the identity. \qed
	\end{proof}

\FGNotFinitelyGenerated*

	\begin{proof}	
	Let $Y_m \subset (\Sigma \times (Q \cup \{0\}))^{\ZZd}$ be the finite set of points $y$ such that
	\begin{itemize}
	\item $\sigma_{m \vec v}(y) = y$ for unit vectors $\vec v \in \ZZd$, and
	\item for some $\vec u \in \ZZd$, $(y_{\vec v})_2 \neq 0 \iff \vec v \in \vec u + m \ZZd$.
	\end{itemize}
	In other words, configurations of $Y_{m}$ have period $m$ in every direction, and have Turing machine heads on a sublattice of the same periods.
	
	Now, to each $T \in \FG(\ZZ^d,n,k)$ we associate the map $\phi(T) : Y_m \to Y_m$ that applies the local rule of the Turing machine at each head position, ignoring the other heads. Since $T$ is injective and never modifies the tape, also the map $\phi(T)$ is injective, and thus $\phi : \FG(\ZZ^d,n,k) \to \Sym(Y_m)$ is a homomorphism to the permutation group $\Sym(Y_m)$ of $Y_m$.
	
	To show the group is not finitely generated, consider the signs of the permutations $\phi(T)$ performs on $Y_m$ for different $m$. It is easy to show that for any vector of signs $s_1, s_2, s_3, \ldots \in \{-1,1\}^\NN$ where $s_i = 1$ for all large enough $i$, we can construct a finite-state automaton $T$ such that for all $m$, $\phi(T)$ performs a permutation with sign $s_m$ on $Y_m$. This means that $\FG(\ZZ^d,n,k)$ has as a homomorphism image the non-finitely generated group $(\ZZ/2\ZZ)^{\infty}$, so $\FG(\ZZ^d,n,k)$ itself cannot be finitely generated. \qed
	\end{proof}

	\Qp*
	
	\begin{proof}
	The machine $T_j$ that increments the state by $1$ on each step (modulo the number of states) and walks one step along the $j$th axis whenever it enters the state $1$, has $\alpha(T_j) = (0,\ldots,0,1/k,0,\ldots,0)$. We obtain
	\[ \langle \alpha(T_j) \;|\; j \leq d \rangle = \QQ_k^d \]
	
	Next, let us show that for every finite-state machine $T$, we have $\alpha(T) \in \QQ_k^d$. For this, consider the behavior of $T$ on the all-zero configuration. Given a fixed state $q$, $T$ moves by an integer vector $\vec v_q$, thus contributing $\frac{1}{k}\vec v_q$ to the average movement. Let $\vec{v} = \sum_{q \in Q}\frac{1}{k}\vec v_q$ be the average movement of $T$ on the all-zero configuration. 
	
	We claim that $\alpha(T) = \vec v$. Note that by composing $T$ with a suitable combination of the machines $T_{j}$ and their inverses, it is enough to prove this in the case $\vec v = \vec{0}$. Now, for a large $m$, Let $p \in \Sigma^{[-m,m]^d}$ be a pattern, $\vec{u} \in [-m,m]^d$ a position and $q \in Q$ a state. Complete $p$ to a configuration $x_p \in \Sigma^{\ZZd}$ by writing $0$ in every cell outside $[-m,m]^d$. Write $\alpha_m(T)$ for the average movement of $T$ for the finitely many choices of $p, u, q$. Formally, if $s_T$ is the shift indicator of $T$: $$\alpha_m(T) = \frac{1}{k (2m+1)^d n^{(2m+1)^d} } \sum_{p,u,q} s_T(\sigma_{-\vec{u}}(x_p),q).$$ As $m \rightarrow \infty$, it is easy to show that $\alpha_m(T) \rightarrow \alpha(T)$, as the movement vector of $T$ is distributed correctly in all positions except at the border of the hypercube which grows slower than the interior.
	
	On the other hand, it is easy to show that for any fixed $[-m,m]^d$-pattern $p$, then the average movement of $T$ on $x_p$ started from a random state and a random position is precisely $\vec 0$, that is, $\sum_{u,q} s_T(\sigma_{-\vec{u}}(x_p),q) = \vec{0}$. This follows from the fact that $T \in \FG(\ZZ^d,n,k)$ and thus the action is simply a permutation of the set of position-state pairs and the fact that $\vec{v}= \vec{0}$. From here we conclude that the sum restricted to $u \in [-m,m]^d$ is $o(m^d)$. It follows that $\alpha(T) = \lim \alpha_m(T) = \vec 0$. \qed
	\end{proof}

	\LEF*															

	\begin{proof}
	The proof we give is essentially the same proof as that of residual finiteness of $\FG(\ZZd,n,k)$, with the difference that we only obtain a `local' homomorphism with that construction.

	Let $M \subset \RTM(\ZZd,n,k)$ be a finite set of Turing machines all of which have radius at most $r$. Let $Y_{8r}$ be as in the proof of the previous theorem.
	
	Now, to each $T \in M^2$ we associate the map $\phi(T) : Y_{8r} \to Y_{8r}$ that applies the local rule of the Turing machine at each head position. The radius of every Turing machine in $M^2$ is at most $2r$, so it is easy to see that since $T$ is injective, also $\phi(T)$ is, and thus $\phi : M^2 \to \Sym(Y_{8r})$ is a function from $M^2$ to the permutation group of $Y_{8r}$. As the heads do not interact in the first two steps, we have $\phi(T \circ T') = \phi(T) \circ \phi(T')$ for all $T,T' \in M$. \qed
	\end{proof}

	\Decisions*
	
	\begin{proof}
For the first claim, simply minimize each rule: iteratively replace $F$ by $F' = F \setminus \{a\}$ as long as $f$ does not depend on the coordinate $a$ in the sense that $f(p,q)_a = p_a$ for all $(p, q) \in \Sigma^F \times Q$. This minimization process ends after at most $|F|$ steps and if $T_f = T_g$ the same local rule is reached from both $f$ and $g$, while this of course does not happen if $T_f \neq T_g$.

Finding a local rule for the composition of two Turing machines is a simple exercise.

For the decidability of reversibility, we give a semialgorithm for both directions. First, if $T_f$ is reversible, then it has a reverse $T_g$. We thus only need to enumerate local rules $g$, and check whether $T_f \circ T_g = \ID$, which is decidable by the previous lemmas.
	
	If $T_f$ is not reversible, then $T_f(x,y) = T_f(x',y')$ for some $(x,y), (x',y') \in \Sigma^{\ZZd} \times X_k$ with $y_{\vec 0} \neq 0$. If $r$ is the move-radius of $f$, then necessarily the nonzero position of $y'$ is at distance at most $r$ from the origin each other and $x_{\vec v} = x'_{\vec v}$ for $|\vec v|$ larger than the radius of $T_f$. Then we can assume $x_{\vec v} = x'_{\vec v} = 0$ for all such $\vec v$. It follows that if $T_f$ is not injective, it is not injective on the finite set of configurations $(x,y) \in \Sigma^{\ZZd} \times X_k$ where $(x,y)_{\vec v} = 0$ for all $|\vec v|$ larger than the radius of $T_f$, which we can check algorithmically.
	
	Of course, if $T$ is reversible, we find a reverse for it by enumerating all Turing machines and outputting the first $T'$ such that $T \circ T' = T' \circ T = \ID$, which we can check by the decidability of equality of Turing machines. \qed
	\end{proof}																						
	
	\OneDTorsion*

\begin{proof}
A semialgorithm for recognizing torsion elements exists in all dimensions, since the word problem is decidable: Given $T \in \FG(\ZZ,n,k)$, for $n = 1, 2, 3, \ldots$, check whether $T^n = \ID$. If this happens for some $n$, $T$ is a torsion element.

For the other direction we give a proof with a dynamical flavor. We need to show that there is a spatially periodic point where, from some initial state, the head of $T$ performs an infinite periodic walk either left or right. A semialgorithm can then enumerate periodic points and initial states, and once it finds a point where the head moves to infinity, it has proved that $T$ is not a torsion element. First, observe that by compactness, there must be a point and an initial state where the head walks arbitrarily far from its initial position. We may suppose that $T$ can walk arbitrarily far to the right.


Now, consider a walk of the head of $T$ from position $0$ to some position $j >> 0$, given by some configuration $(x,y)$. Because $T$ has finite move-radius, there must be a syndetic set of positions $h \in [0, j]$ that $T$ visits on the way to $j$, such that there is a maximal $\ell_h$ such that $(T^{\ell_h}(x,y)_2)_{h} \neq 0$, and $(T^{\ell}(x,y)_2)_i = 0$ for all $\ell > \ell_h$, $i \leq h$. By the pigeonhole principle,
\[ (T^{\ell_h}(x,y)_2)_{h+[-r,r]} = (T^{\ell_{h'}}(x,y)_2)_{h'+[-r,r]} \]
for some $h < h'$, and then $(x_{[h-r, h'-r-1]})^\ZZ$ is a spatially periodic point where, started from the inital state $(T^{\ell_h}(x,y)_2)_{h}$, the head of $T$ performs a periodic walk to infinity.
\qed
\end{proof}																									

\TwoDTorsion*

\begin{proof}
We assume $d = 2$ -- in the general case, we simply walk on a $2$-dimensional plane of the configuration.

First, let us explain why, for a suitable alphabet $\Sigma$ and the local rule of an element $f \in \FG(\ZZ^2,|\Sigma|,2)$, it is undecidable whether $f$ is a torsion element in that group. We then prove that the alphabet can be fixed, then that we can restrict to a finitely generated subgroup as long as we have $4$ states, and finally we get rid of states altogether.

In \cite{Ka03}, it is shown that, given a set of Wang tiles $T$ and a function $d : T \to D$ where $D = \{(1,0),(-1,0),(0,1),(0,-1)\}$, the \emph{snake tiling problem} is undecidable, that is, it is undecidable whether it is possible to find a partial tiling $\tau : \ZZ^2 \to \{\epsilon\} \cup T$, and a path $p : \ZZ \to \ZZ^2$ such that $\tau(p(n)) \in T$ and $p(n+1) - p(n) = d(\tau(p(n)))$ for all $n$ and all tiles match their non-$\epsilon$ neighbors.

Let $\Sigma = T$. We use the state -- the \emph{direction bit} -- to denote a direction for our element $f \in \FG(\ZZ^2,|T|,2)$. As long as $f$ is on a tile that matches all its four neighbors, it walks along the vectors given by $d$ (or their negations, depending on the direction bit). If the neighbors do not match, the direction bit of $f$ is flipped.

Now, if it is possible to tile an infinite snake, then there is a configuration where $f$ walks to infinity. If it is impossible, then there is a bound on how far $f$ can walk from its starting position before turning back, and $f$ has finite order. This concludes the proof of undecidability when the alphabet can be chosen freely.

To get a fixed alphabet, encode the tiles into binary squares of size $n \times n$. By having the top left corner contain $\begin{smallmatrix}1&1\\1&1\end{smallmatrix}$ and having no two adjacent $1$s elsewhere, we ensure that there is a unique way to `parse' a given tiling into encodings of squares. Of course, our machine will simply flip its direction bit when it does not locally see a valid coding of a tile.

Next, we want to do the same construction with a finitely generated subgroup. For this, we add a second \emph{auxiliary bit}, or \emph{aux bit} to the state. We also change the alphabet to one allowing us to draw paths on configurations: every tile has zero, one or two incoming arrows from the cardinal directions $D$, and an outgoing path for each incoming one. This alphabet $B$ can be used to encode tiles into $(n \times n)$-squares as well. We assume that the encoded tiles are connected by a path to their neighbors according to the $d$-function, so that by following the path for $n$ steps from the central cell of the encoding of a Wang tile, we reach the central cell of the next Wang tile.


In our finite set of generators we take
\begin{enumerate}
\item $T_{\mathtt{walk}}$ that walks along the path depending on the direction bit,
\item $T_{\vec v}$ that walk in the direction $\vec v \in D$ independently of the configuration,
\item $g_s$ that flips the direction bit if the current cell is $s \in \Sigma$,
\item $h_s$ that flips the aux bit if the current cell is $s$,
\item $g_{+,s}$ that adds the aux bit to the direction bit if the current cell is $s$, and
\item $h_{+,s}$ that adds the direction bit to the aux bit if the current cell is $s$,
\end{enumerate}
Let $F$ be this set of machines.

More generally, for a pattern $p$ write $g_p$ and $h_p$ for the machines that flip the direction bit or the auxiliary bit if they see the pattern $p$. Observe first that it is enough to construct all such $g_p$ and $h_p$ out of the machines in $F$. Namely, Let $\mathcal{P}$ be the set of all the $3n \times 3n$ patterns $p$ that are not a legal coding of matching Wang tiles, and apply $g_p$ for all of them to get a machine $g = \prod_{p \in \mathcal{P}} g_p$. Then $T_{\mathtt{walk}}^n \circ g$ is a torsion element if and only if the Wang tile set allows no infinite snakes.


To build $g_p$ and $h_p$, we proceed by induction. Suppose $D(p) = D(p') \cup \{\vec v\}$, $p|_{D(p')} = p'$, $p_{\vec v} = s$ and $g_{p'}$ and $h_{p'}$ can be built from $F$. Then
\[ g_p = (T_{-\vec v} \circ g_{+,s} \circ T_{\vec v} \circ h_{p'})^2, \mbox{ and } h_p = (T_{-\vec v} \circ h_{+,s} \circ T_{\vec v} \circ g_{p'})^2. \]

Finally, let us get rid of the state. For this, add the symbol $0$ to the alphabet $B$. The head now only moves if it is on a square of the form $\begin{smallmatrix} b & 0 \\ 0 & 0 \end{smallmatrix}$ for $b \in B$ which is surrounded by squares of the same form. It uses the four positions modulo $\ZZ/2\ZZ \times \ZZ/2\ZZ$ in this square to denote its state. If the $2 \times 2$ period where zeroes and nonzeroes occur breaks, the direction bit is reversed (that is, the head moves to another position modulo $\ZZ/2\ZZ \times \ZZ/2\ZZ$). It is easy to modify the generators $F$ to obtain such behavior. Again, we obtain that that solving the torsion in the group generated by these finite-state machines requires solving the snake tiling problem for all sets of directed Wang tiles, and is thus undecidable. \qed
\end{proof}

\section*{Appendix B: $\OB$ is finitely generated}

Next we prove that $\OB(\ZZd,n,k)$ is finitely generated.
This is based on the existence of strongly universal reversible gates for permutations of $A^m$, recently proved for the binary
alphabet $A=\{0,1\}$ in~\cite{AaGrSc15,Xu15}, and generalized to other alphabets in~\cite{Boykett16}. We need a finite generating set
for permutations of $Q\times \Sigma^m$, and hence the proof in~\cite{Boykett16} has to be adjusted
to account for non-homogeneous alphabet sizes (that is, due to possibly having $n\neq k$).

The case $n=1$ is trivial: Group $\LP(\ZZd,1,k)$ is finite and
$\SHIFT(\ZZd,1,k)$ is generated by the single step moves.
We hence assume that $n>1$.

The following lemma was proved in~\cite{Boykett16}
(Lemmas 3 and 5):

\begin{lemma}
\label{lem:graphlemma}
Let $H=(V,E)$ be a connected undirected graph.
\begin{enumerate}
\item[(a)] The transpositions $(s\; t)$ for $\{s,t\}\in E$ generate $\Sym(V)$, the set of permutations of the vertex
set.
\item[(b)] Let $\Delta\subseteq \Sym(V)$ be a set of permutations of $V$ that
contains for each edge  $\{s,t\}\in E$ a 3-cycle $(x\; y\; z)$ where $\{s,t\}\subset \{x,y,z\}$. Then $\Delta$ generates
$\Alt(V)$, the set of even permutations of the vertex set.
\end{enumerate}
\end{lemma}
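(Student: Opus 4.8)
The plan is to prove both parts by the classical arguments that a connected family of transpositions (resp.\ of $3$-cycles) generates the full symmetric (resp.\ alternating) group. For part (a), I would first replace $H$ by a spanning tree $T$, so that it suffices to see that the transpositions along the edges of $T$ already generate $\Sym(V)$. Since $\Sym(V)$ is classically generated by \emph{all} transpositions, it is enough to express an arbitrary transposition $(u\;w)$ as a product of edge transpositions. I would do this by induction on the length $m$ of the unique $T$-path $u=v_0,v_1,\dots,v_m=w$: the case $m=1$ is an edge transposition, and for $m>1$ I would use the conjugation identity $(v_0\;v_m)=(v_{m-1}\;v_m)(v_0\;v_{m-1})(v_{m-1}\;v_m)$, where $(v_0\;v_{m-1})$ is available by the inductive hypothesis and $(v_{m-1}\;v_m)$ is an edge transposition. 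The containment $\langle\,(s\;t):\{s,t\}\in E\,\rangle\subseteq\Sym(V)$ is trivial.

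For part (b) the key ingredient is the sublemma: if $\gamma,\gamma'$ are $3$-cycles whose supports intersect, then $\langle\gamma,\gamma'\rangle=\Alt(\mathrm{supp}(\gamma)\cup\mathrm{supp}(\gamma'))$. The union here has $3$, $4$ or $5$ elements, so this reduces to three finite checks: for $3$ elements it is $A_3=\langle\gamma\rangle$; for $4$ elements, e.g.\ $\langle(1\;2\;3),(1\;2\;4)\rangle=A_4$, it is a short direct computation; and for $5$ elements, e.g.\ $\langle(1\;2\;3),(3\;4\;5)\rangle=A_5$, it follows because the generated subgroup is a transitive subgroup of the simple group $A_5$ containing an element of order $3$, hence has order divisible by $15$, and is neither the unique $C_{15}$ (it contains two distinct subgroups of order $3$) nor a subgroup of index $2$ (there is none), so it equals $A_5$. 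The inclusion $\langle\gamma,\gamma'\rangle\subseteq\Alt(\mathrm{supp}(\gamma)\cup\mathrm{supp}(\gamma'))$ is clear since $3$-cycles are even.

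Next I would assemble the proof of (b). For each edge $e=\{s,t\}\in E$ fix the prescribed $3$-cycle $c_e\in\Delta$ with $\{s,t\}\subseteq\mathrm{supp}(c_e)$, and enumerate the edges of the connected graph $H$ as $e_1,\dots,e_M$ so that each $e_i$ with $i\ge2$ shares a vertex with $e_1\cup\dots\cup e_{i-1}$ (a spanning-tree search produces such an ordering). Writing $S_i=\bigcup_{j\le i}\mathrm{supp}(c_{e_j})$, a vertex shared by $e_i$ and the earlier edges lies in $\mathrm{supp}(c_{e_i})\cap S_{i-1}$, so the supports form a connected chain; I would then prove by induction that $\langle c_{e_1},\dots,c_{e_i}\rangle\supseteq\Alt(S_i)$, passing from $\Alt(S_{i-1})$ and $c_{e_i}$ to $\Alt(S_i)$ by conjugating $c_{e_i}$ by elements of $\Alt(S_{i-1})$ to spread it over $S_{i-1}$ and then applying the sublemma. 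When $|V|\le2$ the statement is vacuous or trivial (the hypothesis cannot be met once there is an edge, and $\Alt(V)$ is trivial); otherwise $S_M=V$ since every vertex is an endpoint of some edge, so $\langle\Delta\rangle\supseteq\Alt(V)$, and the reverse inclusion holds because every element of $\Delta$ that matters is a $3$-cycle, hence even.

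The main obstacle I expect is the bookkeeping in that induction step: one must treat the cases $|\mathrm{supp}(c_{e_i})\cap S_{i-1}|\in\{1,2,3\}$ separately, and the small value $|S_{i-1}|=3$ must be handled by the sublemma directly rather than by $2$-transitivity (which is only available once $|S_{i-1}|\ge4$). Concretely, if the new $3$-cycle meets $S_{i-1}$ in two points one first produces $\Alt(S_{i-1}\cup\{\text{new point}\})$ and then, if a further point is new, repeats; if it meets $S_{i-1}$ in one point one uses the $5$-element sublemma case first. This case analysis is routine but is exactly where the content of the ``connected $3$-cycles'' statement sits; the reductions to spanning trees and to a suitable edge ordering are entirely standard.
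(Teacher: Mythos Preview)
The paper does not prove this lemma; it is quoted from \cite{Boykett16} (Lemmas~3 and~5 there) without argument. Your direct proof is the standard one and is correct: part~(a) is the classical spanning-tree argument, and part~(b) follows from your sublemma on pairs of overlapping $3$-cycles together with the connected-chain induction you describe. The case analysis in the inductive step (one or two new points, and the small case $|S_{i-1}|=3$) goes through exactly as you outline.

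One minor point on~(b): as stated, $\Delta$ is only required to \emph{contain} the prescribed $3$-cycles, not to consist solely of them, so for arbitrary $\Delta$ the correct conclusion is $\langle\Delta\rangle\supseteq\Alt(V)$ rather than equality. Your sentence about the reverse inclusion (``every element of $\Delta$ that matters is a $3$-cycle'') is therefore not a proof of equality in general; what your argument actually establishes is that the selected $3$-cycles $c_e$ already generate $\Alt(V)$. In every use the paper makes of the lemma, $\Delta$ is a set of $3$-cycles, so equality does hold there.
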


Let $m\geq 1$, and consider permutations
of $Q\times\Sigma^m$. \emph{Controlled swaps} are transpositions $(s\; t)$ where $s,t\in Q\times \Sigma^m$
have Hamming distance one. \emph{Controlled 3-cycles\/} are permutations $(s\; t\; u)$ where the Hamming distances between the three
pairs are 1, 1 and 2, that is, the vectors $s, t$ and $u$
are of the forms $uavcw$, $uavdw$ and $ubvdw$, where $a,b,c$ and $d$ are single letters.
Let us denote by $C_m^{(2)}$ and  $C_m^{(3)}$ the sets of controlled swaps and 3-cycles, respectively, in
$\Sym(Q\times\Sigma^m)$.
Let $H=(V,E)$ be the graph with vertices $V=Q\times\Sigma^m$ and
edges $\{s,t\}$ that connect elements $s$ and $t$ having Hamming distance one. This graph is clearly connected, so we get from
Lemma~\ref{lem:graphlemma}(b) that
the controlled 3-cycles generate all its even permutations:

\begin{lemma}
\label{lem:controlled}
Let $n>1$ and $m\geq 1$. The group $\Alt(Q\times \Sigma^m)$  is generated by $C_m^{(3)}$.
\end{lemma}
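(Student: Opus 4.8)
The plan is to derive the statement directly from Lemma~\ref{lem:graphlemma}(b) applied to the Hamming graph $H=(V,E)$ with $V=Q\times\Sigma^m$. Since every $3$-cycle is an even permutation we have $C_m^{(3)}\subseteq\Alt(V)$, hence $\langle C_m^{(3)}\rangle\subseteq\Alt(V)$, so the whole content of the lemma is the reverse inclusion. For that it suffices to check the hypothesis of Lemma~\ref{lem:graphlemma}(b) with $\Delta=C_m^{(3)}$: for every edge $\{s,t\}\in E$ one must produce a controlled $3$-cycle in $C_m^{(3)}$ whose underlying triple contains $\{s,t\}$. (Connectedness of $H$, the other hypothesis, is already recorded in the text, as $H$ is the Cartesian product of the complete graphs on $Q$ and on the $m$ copies of $\Sigma$.)

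Fix an edge $\{s,t\}\in E$. By definition $s$ and $t$ agree in all coordinates except one, say coordinate $i$, where they carry distinct letters $a$ and $b$. Using the hypotheses $m\geq 1$ and $n>1$, there is a coordinate $j\neq i$ carrying at least two available symbols (if $i$ is the state coordinate, take any of the $m$ tape coordinates; otherwise take another tape coordinate, or the state coordinate when $k\geq 2$). Let $c$ be the common value of $s$ and $t$ at coordinate $j$, pick a symbol $d\neq c$ available there, and let $z$ be the vertex obtained from $s$ by replacing $c$ with $d$ in coordinate $j$. Then $z$ agrees with $s$ everywhere except coordinate $j$, so $d(s,z)=1$; $d(s,t)=1$ by assumption; and $z$ differs from $t$ exactly in coordinates $i$ and $j$, so $d(t,z)=2$. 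Thus the triple $\{s,t,z\}$ has pairwise Hamming distances $1,1,2$ and, written out relative to coordinates $i$ and $j$, the vertices are of the forms required in the definition of a controlled $3$-cycle; hence $C_m^{(3)}$ contains a $3$-cycle on $\{s,t,z\}\supseteq\{s,t\}$.

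Applying Lemma~\ref{lem:graphlemma}(b) now gives $\Alt(V)\subseteq\langle C_m^{(3)}\rangle$, which together with the trivial inclusion above yields $\Alt(Q\times\Sigma^m)=\langle C_m^{(3)}\rangle$. The only step requiring any care — and it is mild — is the coordinate bookkeeping: one must verify both that a second usable coordinate $j$ exists (this is exactly the role of $n>1$ and $m\geq 1$) and that the triple $\{s,t,z\}$ genuinely matches the shape $uavcw,\ uavdw,\ ubvdw$ of a controlled $3$-cycle rather than collapsing to a single varying coordinate. No deeper obstacle arises, since the group-theoretic work is entirely contained in the cited Lemma~\ref{lem:graphlemma}.
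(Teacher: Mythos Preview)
Your proof is correct and takes the same route as the paper: both derive the lemma from Lemma~\ref{lem:graphlemma}(b) applied to the Hamming graph on $Q\times\Sigma^m$, with your version spelling out the edge-covering verification that the paper leaves implicit. (The one boundary case your coordinate bookkeeping does not cover is $m=1$ with $|Q|=1$, but there $C_1^{(3)}=\emptyset$ while $\Alt(\Sigma)$ is nontrivial for $n\geq 3$, so the lemma as stated actually fails; the paper only ever invokes it for $m\geq 6$, where your argument goes through without issue.)
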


Let $\ell \leq m$, and let $f$ be a permutation of  $Q\times \Sigma^{\ell}$. We can apply $f$ on
$1+\ell$ coordinates of $Q\times \Sigma^{m}$,  while leaving the other
$m-\ell$ coordinates untouched. More precisely, the \emph{prefix application} $\hat{f}$ of $f$ on $Q\times \Sigma^{m}$, defined by
$$
\hat{f} : (q,s_1,\dots ,s_\ell,\dots ,s_m) \mapsto
(f(q,s_1,\dots ,s_\ell)_1, \dots ,f(q,s_1,\dots ,s_\ell)_\ell, s_{\ell+1},\dots ,s_m),
$$
applies $f$ on the first $1+\ell$ coordinates. To apply it on other choices of coordinates we conjugate $\hat{f}$
using rewirings of symbols.
For any permutation $\pi\in \Sym(\{1,\dots,m\})$ we define the
\emph{rewiring} permutation of $Q\times \Sigma^m$ by
$$
r_\pi : (q,s_1,\dots ,s_m) \mapsto (q,s_{\pi(1)},\dots ,s_{\pi(m)}).
$$
It permutes the positions of the $m$ tape symbols according to $\pi$.
Now we can conjugate the prefix application $\hat{f}$ using a rewiring to get
$\hat{f}_\pi = r_\pi^{-1} \circ \hat{f}\circ  r_\pi$, an
\emph{application} of $f$ in selected coordinates. Let us denote by
$$[f]_m = \{ \hat{f}_\pi\ |\ \pi\in \Sym(m)\}$$
the set of permutations of $Q\times \Sigma^{m}$ that are applications of $f$.
For a set $P$ of permutations we denote by $[P]_m$ the union of $[f]_m$ over all $f\in P$.

Note that if $n$ is even and $f\in\Sym(Q\times \Sigma^\ell)$ for $\ell < m$ then $[f]_m$ only contains even permutations.
The reason is that
the coordinates not participating in the application of $f$ carry a symbol of the even
alphabet $\Sigma$. The application $[f]_m$ then consists of an even number of
disjoint permutations of equal parity -- hence the result is even. In contrast,
for the analogous reason, if $n$ is odd then $[f]_m$ only contains odd permutations
whenever $f$ is itself is an odd permutation.

\begin{lemma}
\label{lem:iteration}
Let $m\geq 6$, and let $G_m =\langle [C_4^{(2)}]_m\rangle$ be the group generated by the
applications of controlled swaps of $Q\times \Sigma^4$ on $Q\times \Sigma^m$.
If $n=|\Sigma|$ is odd then $G_m=\Sym(Q\times \Sigma^{m})$. If $n$ is even
then $G_m=\Alt(Q\times \Sigma^{m})$.
\end{lemma}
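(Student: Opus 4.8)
The plan is to reduce the statement, via Lemma~\ref{lem:controlled}, to the single assertion that every controlled $3$-cycle of $Q\times\Sigma^m$ lies in $G_m$, and to settle the $\Sym$-versus-$\Alt$ dichotomy separately by a parity count. For the parity count: if $n$ is even, every element of $[C_4^{(2)}]_m$ is an application of an odd permutation of $Q\times\Sigma^4$ on strictly more than $4$ coordinates, hence is even by the parity remark preceding the statement, so $G_m\leq\Alt(Q\times\Sigma^m)$; if $n$ is odd, the same generators are odd, so $G_m\not\leq\Alt(Q\times\Sigma^m)$. Since $\Alt(Q\times\Sigma^m)$ has index $2$ in $\Sym(Q\times\Sigma^m)$, it therefore suffices to prove the one inclusion $\Alt(Q\times\Sigma^m)\leq G_m$: for $n$ even this gives $G_m=\Alt(Q\times\Sigma^m)$, and for $n$ odd it gives a group properly containing the index-$2$ subgroup $\Alt(Q\times\Sigma^m)$, hence $G_m=\Sym(Q\times\Sigma^m)$.

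By Lemma~\ref{lem:controlled} it is then enough to put every controlled $3$-cycle into $G_m$. Call a permutation of $Q\times\Sigma^m$ a \emph{$C$-controlled flip} if it flips a single target coordinate between two fixed values exactly when the values on a set $C$ of other coordinates match a fixed pattern, acting as the identity otherwise and uniformly on the remaining coordinates. A controlled swap of $Q\times\Sigma^4$ applied to a window (state coordinate plus four tape coordinates) of $Q\times\Sigma^m$ is precisely a $C$-controlled flip with $|C|=4$, so all of these lie in $G_m$ by definition; for $|C|<4$ a $C$-controlled flip is a product of $5$-,\ldots,$4$-controlled flips (sum over the values of the extra control coordinates), so those lie in $G_m$ as well. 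Since a controlled $3$-cycle is a product of two Hamming-distance-one transpositions, i.e.\ of two full-control flips, the whole problem comes down to synthesizing $C$-controlled flips and controlled $3$-cycles with large control sets out of the $(\leq 4)$-controlled ones.

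This I would do by induction on the size of the control set, using one coordinate $c_0\in C$ as recycled scratch, in the style of the borrowed-bit decompositions of multiply-controlled reversible gates from~\cite{Boykett16,AaGrSc15}: write the gate as $\rho\,\mu\,\rho^{-1}\cdots$ where $\rho$ is a $(C\setminus\{c_0\})$-controlled flip of $c_0$ (fewer controls), $\mu$ is the desired flip or $3$-cycle controlled only on $c_0$, and the conjugation restores $c_0$ to its initial value. Each factor has a strictly smaller control set, so after finitely many steps every factor is a $(\leq 4)$-controlled flip already shown to be in $G_m$; note that the construction is carried out with the $3$-cycle (not the single transposition) as the controlled operation, since single transpositions are odd and unreachable when $n$ is even, consistently with landing in $\Alt$. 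Feeding the resulting controlled $3$-cycles into Lemma~\ref{lem:controlled} yields $\Alt(Q\times\Sigma^m)\leq G_m$, and the parity paragraph finishes the proof.

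The main obstacle — and the reason this is not a verbatim citation of~\cite{Boykett16} — is twofold. First, the alphabet is non-homogeneous, $k=|Q|$ need not equal $n=|\Sigma|$, so the recycled coordinate $c_0$ and the coordinates used as controls must be taken among the tape coordinates (or the state coordinate treated by hand), and one must verify that every parity count above depends only on the parity of $n$, which is exactly what yields the stated dichotomy. Second, at each level of the recursion the gate being built must still act uniformly on at least one coordinate, so that for $n$ even it stays an even permutation and the induction remains inside the reachable class; ensuring this throughout is where the hypothesis $m\geq 6$ is genuinely used.
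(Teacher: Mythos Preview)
Your overall strategy matches the paper's: reduce to $\Alt(Q\times\Sigma^m)\leq G_m$ by the parity remark, then invoke Lemma~\ref{lem:controlled} so that the task becomes decomposing an arbitrary controlled $3$-cycle into applications of controlled swaps with at most four controls. The parity discussion and the identification of the obstacle (keeping one coordinate free so that, for even $n$, every intermediate gate is even) are both on target.

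The paper organises the recursion differently. Rather than inducting on the size of the control set, it inducts on $m$ and gives one explicit identity: any controlled $3$-cycle of $Q\times\Sigma^m$ equals $p_1p_2p_1p_2$ where $p_1,p_2$ are controlled swaps of $Q\times\Sigma^{m-2}$ (two wires left uncontrolled). This single commutator-type decomposition yields $C_m^{(3)}\subseteq\langle[C_{m-2}^{(2)}]_m\rangle$, and then the inductive hypothesis on $G_{m-1}$ absorbs $[C_{m-2}^{(2)}]_m$ into $G_m$, with the two free wires guaranteeing evenness in the even-$n$ case. Your borrowed-bit recursion reaches the same destination, but be careful with the formula: the conjugate $\rho\mu\rho^{-1}$ you wrote does not produce a $C$-controlled gate (work it out on an input where $C\setminus\{c_0\}$ fails but $c_0$ already matches); what is needed is the commutator $\rho\mu\rho\mu$ with $\rho,\mu$ involutions, which is exactly the paper's $p_1p_2p_1p_2$. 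Once that is fixed, the two arguments are essentially the same, the paper's being more explicit and yours being closer to the presentation in \cite{AaGrSc15}.
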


\begin{proof}
For even $n$, by the note above,  $[C_4^{(2)}]_m \subseteq \Alt(Q\times
\Sigma^{m})$, and for odd $n$ there are odd permutations in $[C_4^{(2)}]_m$.
So in both cases
it is enough to show $\Alt(Q\times \Sigma^{m}) \subseteq G_m$. We
also note that, obviously, $[G_{m-1}]_m\subseteq G_m$.

Based on the
decomposition in Figure~\ref{fig:gates}, we first conclude that
any controlled 3-cycle $f$ of $Q\times \Sigma^m$ is a composition
of four applications of controlled swaps of $Q\times \Sigma^{m-2}$.
In the figure, the components of $Q\times \Sigma^m$ have been
ordered in parallel horizontal wires, the $Q$-component being among the topmost three
wires. Referring to the symbols in the illustration, the gate on the left is a
generic 3-cycle $(pszabcdw\; ptzabcdw\;
qszabcdw)$ where one of the first three wires is the $Q$-component,
$a,b,c,d\in \Sigma$ and $w\in \Sigma^{m-6}$.
The proposed decomposition consists of two different controlled swaps $p_1$ and $p_2$ applied twice in
the order $f=p_1p_2p_1p_2$. Because $p_1$ and $p_2$ are involutions,
the decomposition amounts to identity unless the input is of the form $xyz abcd w$ where
$x\in\{p,q\}$ and $y\in\{s,t\}$. When the input is of this form, it
is easy to very that the circuit on the right indeed amounts to the required 3-cycle. We
conclude that $C_m^{(3)}\subseteq  \langle [C_{m-2}^{(2)}]_m \rangle$, for all
$m\geq 6$. By Lemma~\ref{lem:controlled},
\begin{equation}
\label{eq:gateq}
\Alt(Q\times \Sigma^m) = \langle C_m^{(3)} \rangle \subseteq \langle
[C_{m-2}^{(2)}]_m \rangle.
\end{equation}

\begin{figure}[ht]
\begin{center}
\includegraphics[ width=11.5cm]{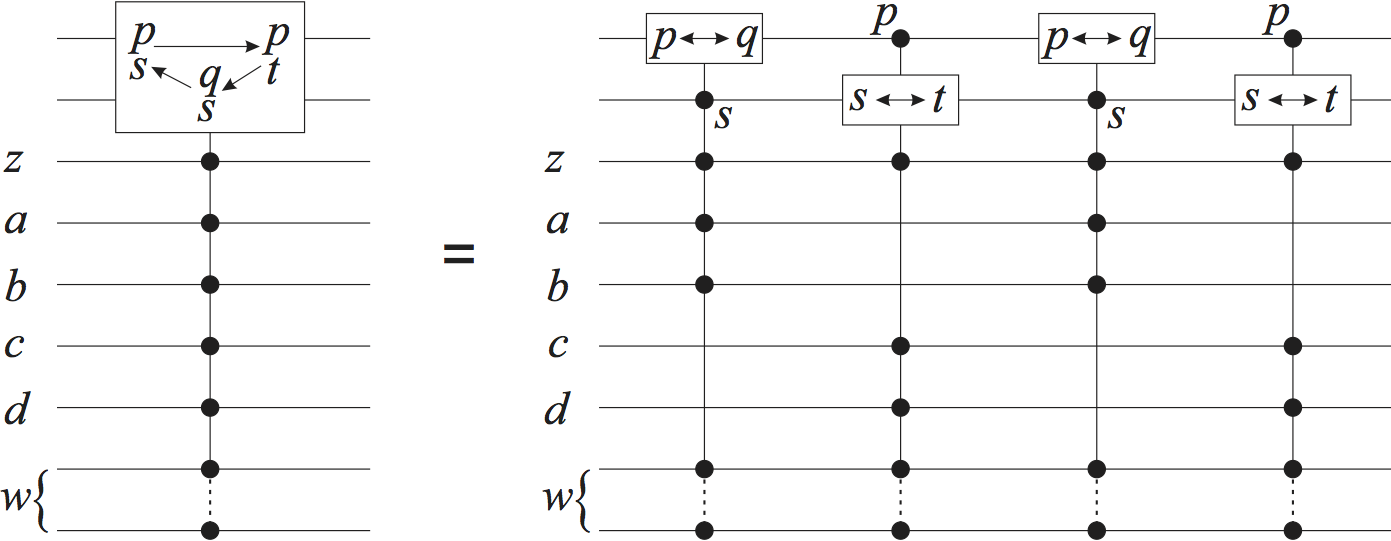}
\end{center}
\caption{A decomposition of a controlled 3-cycle of $Q\times \Sigma^m$ on the left into a sequence of
four applications of
controlled swaps of $Q\times \Sigma^{m-2}$ on the right. The ordering of the wires is such that
topmost three wires contain the $Q$-component and the two wires
changed by the 3-cycle (one of which may or may not be the $Q$-component). Black circles are control points: the gate computes the identity unless the
wire carries the symbol indicated at the left of the wire or next
to the control point.
}
\label{fig:gates}
\end{figure}

We proceed by induction on $m$. The base case $m=6$ is clear: By
(\ref{eq:gateq}),
$$\Alt(Q\times \Sigma^6) \subseteq \langle [C_{4}^{(2)}]_6 \rangle =
G_6.$$

Consider then $m>6$ and suppose
that $G_{m-1}$ is as claimed. If $n$ is odd then, by the inductive hypothesis,
$$
[C_{m-2}^{(2)}]_m  \subseteq  [\Sym(Q\times \Sigma^{m-1})]_m \subseteq
 [G_{m-1}]_m \subseteq G_m.
$$
By (\ref{eq:gateq}) then $\Alt(Q\times \Sigma^m)\subseteq
\langle [C_{m-2}^{(2)}]_m \rangle \subseteq  G_m$.
As pointed out above, $G_m$ contains odd
permutations (all elements of $[C_4^{(2)}]_m$ are odd), so $G_m=\Sym(Q\times \Sigma^{m})$
as claimed.

If $n$ is even then
an application of a permutation of $Q\times \Sigma^{m-2}$ on
$Q\times \Sigma^m$ is also an application of an even permutation of $Q\times \Sigma^{m-1}$
on $Q\times \Sigma^m$. (For this reason we left two wires
non-controlling for the gates on the
right side of Figure~\ref{fig:gates}.) By this and the inductive hypotheses,
$$
[C_{m-2}^{(2)}]_m \subseteq [\Alt(Q\times \Sigma^{m-1})]_m\subseteq
[G_{m-1}]_m \subseteq G_m,
$$
so, by (\ref{eq:gateq}), we have the required $\Alt(Q\times \Sigma^m)\subseteq
G_{m}$.
\qed

\end{proof}

\begin{corollary}
\label{cor:borrowedbit}
$[\Sym(Q\times \Sigma^{m})]_{m+1} \subseteq \langle [\Sym(Q\times \Sigma^4)]_{m+1}\rangle$
for all $m\geq 5$.
\end{corollary}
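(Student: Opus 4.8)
The plan is to deduce the corollary almost immediately from Lemma~\ref{lem:iteration}, applied with its parameter $m$ replaced by $m+1$; this substitution is legal exactly when $m+1\geq 6$, i.e. when $m\geq 5$, which is the reason for the hypothesis. First I would record the trivial inclusion $[C_4^{(2)}]_{m+1}\subseteq[\Sym(Q\times\Sigma^4)]_{m+1}$, which is immediate since $C_4^{(2)}\subseteq\Sym(Q\times\Sigma^4)$. Consequently the group $G_{m+1}=\langle[C_4^{(2)}]_{m+1}\rangle$ furnished by Lemma~\ref{lem:iteration} is contained in $\langle[\Sym(Q\times\Sigma^4)]_{m+1}\rangle$, so it suffices to show that every application $\hat f_\pi\in[\Sym(Q\times\Sigma^m)]_{m+1}$ of a permutation $f$ of $Q\times\Sigma^m$ on $Q\times\Sigma^{m+1}$ lies in $G_{m+1}$.

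I would then split on the parity of $n=|\Sigma|$. If $n$ is odd, Lemma~\ref{lem:iteration} gives $G_{m+1}=\Sym(Q\times\Sigma^{m+1})$, and since $\hat f_\pi$ is in particular a permutation of $Q\times\Sigma^{m+1}$ we are done. If $n$ is even, Lemma~\ref{lem:iteration} only yields $G_{m+1}=\Alt(Q\times\Sigma^{m+1})$, so I additionally need that $\hat f_\pi$ is an even permutation. This is precisely the parity observation stated just before Lemma~\ref{lem:iteration}: in $[\Sym(Q\times\Sigma^m)]_{m+1}$ one tape coordinate is left untouched and it ranges over the even-size alphabet $\Sigma$, so $\hat f_\pi$ is a disjoint union of $n$ permutations each conjugate to $f$, all of the same parity; since $n$ is even the product is even. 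Hence $\hat f_\pi\in\Alt(Q\times\Sigma^{m+1})=G_{m+1}$, closing this case too.

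Combining the two cases gives $[\Sym(Q\times\Sigma^m)]_{m+1}\subseteq G_{m+1}\subseteq\langle[\Sym(Q\times\Sigma^4)]_{m+1}\rangle$, as desired. There is no real obstacle here: all the difficulty is already packed into Lemma~\ref{lem:iteration}, and the corollary merely says that a single ``borrowed'' tape cell lets one express applications of $(Q\times\Sigma^m)$-permutations through applications of $(Q\times\Sigma^4)$-permutations. The only subtle point is the even-$n$ case, where one must remember the parity restriction and invoke it with the strict inequality $m<m+1$; this is exactly why the statement is formulated for applications on $Q\times\Sigma^{m+1}$ rather than on $Q\times\Sigma^m$.
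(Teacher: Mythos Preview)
Your proof is correct and follows essentially the same route as the paper's: apply Lemma~\ref{lem:iteration} with parameter $m+1$ (hence the hypothesis $m\geq 5$), use the parity observation preceding that lemma to handle the even-$n$ case, and finish with the trivial inclusion $C_4^{(2)}\subseteq\Sym(Q\times\Sigma^4)$. The paper's own proof is a two-sentence compression of exactly this argument.
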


\begin{proof}
If $n$ is even then $[\Sym(Q\times \Sigma^{m})]_{m+1} \subseteq \Alt(Q\times
\Sigma^{m+1})$ and if $n$ is odd then
$[\Sym(Q\times \Sigma^{m})]_{m+1} \subseteq \Sym(Q\times
\Sigma^{m+1})$. In either case, the claim follows from
Lemma~\ref{lem:iteration} and $C_4^{(2)}\subseteq \Sym(Q\times
\Sigma^4)$.
\qed
\end{proof}

In Corollary~\ref{cor:borrowedbit}, arbitrary permutations of $Q\times\Sigma^{m}$
are obtained as projections of permutations of
$Q\times\Sigma^{m+1}$.
The extra symbol is an ancilla that can have an arbitrary initial
value and is returned back to this value in the end.
Such an ancilla is called a ``borrowed bit" in~\cite{Xu15}.
It is needed in the case of even $n$
to facilitate implementing odd permutations of $Q\times \Sigma^{m}$.

Now we are ready to prove the following theorem.

    \OBFG*
    
\begin{proof}
We construct a finite generating set $A_1\cup A_2\cup A_3$.

Let $A_1$ contain the one step moves $T_{e_i}$ for $i=1,\dots ,d$. These clearly generate
$\SHIFT(\ZZd,n,k)$.

Each $T\in \LP(\ZZd,n,k)$
is defined by a local rule $f:\Sigma^F\times Q \rightarrow  \Sigma^F\times Q\times\{0\}$
with a finite $F\subseteq \ZZd$. To have injectivity, we clearly need that $\pi:(p,q)\mapsto (f(p,q)_1, f(p,q)_2)$
is a permutation of $\Sigma^F\times Q$. We denote $T=P_\pi$.
Let us fix an arbitrary $E\subseteq  \ZZd$ of size $4$, and let
$A_2$ be the set of all $P_\pi\in \LP(\ZZd,n,k)$ determined by $\pi\in \Sym(\Sigma^E\times Q)$.

For any  permutation $\alpha$ of $\ZZd$ with finite support, we define
the cell permutation machine $C_\alpha: (p,q) \mapsto (p',q)$, where $p'_{\vec v}=p_{\alpha({\vec v})}$
for all ${\vec v}\in\ZZd$. These are clearly in $\LP(\ZZd,n,k)$. We take $A_3$ to consist of the cell
permutation machines $C_i=C_{(0\; e_i)}$ that, for each $i=1,\dots ,d$, swaps the contents of the currently scanned cell and its
neighbor with offset $e_i$.

Observe that $A_1$ and $A_3$ generate all cell permutation
machines $C_\alpha$. First, conjugating $C_i$ with
$T_{\vec v}\in\SHIFT(\ZZd,n,k)$ gives the cell permutation machine $C_\alpha=T_{\vec v}^{-1}C_iT_{\vec
v}$ for the transposition $\alpha=({\vec v}\; {\vec v} + e_i)$.
Such transpositions generate all permutations of $\ZZd$ with finite
support: This follows from Lemma~\ref{lem:graphlemma}(a) by
considering a finite connected grid graph containing the support of
the permutation.

Consider then an arbitrary
$P_\pi\in \OB(\ZZd,n,k)$, where $\pi\in \Sym(\Sigma^F\times Q)$.
We can safely assume $|F|\geq 5$.
Let us pick one ancilla $v\in\ZZd\setminus F$ and denote $F'=F\cup \{v\}$.
By Corollary~\ref{cor:borrowedbit}, $P_\pi$ is a composition of
machines of type $P_\rho$ for $\rho\in \Sym(\Sigma^H\times Q)$
where $H\subseteq F'$ has size $|H|=4$. It is enough to be able to generate these.
Let $\alpha$ be a permutation of $\ZZd$ that exchanges $E$ and $H$,
two sets of cardinality four. Then $C_\alpha^{-1} P_\rho C_\alpha
\in A_2$, which implies that $P_\rho$ is generated by $A_1\cup A_2\cup
A_3$.
\qed
\end{proof}

\end{document}